\documentclass{article}
\usepackage{tikz}
\usetikzlibrary{arrows}
\usepackage[framemethod=tikz]{mdframed}
\usepackage{wrapfig}
\usepackage{amsmath,amssymb}
\usepackage{type1cm}
\usepackage{amsthm}
\usepackage{mathrsfs}
\usepackage{mathtools}
\usepackage{enumerate}
\AtBeginDocument{%
   \def\MR#1{}
}

\theoremstyle{definition}
\newtheorem{theo}{Theorem}[section]
\newtheorem{prop}[theo]{Proposition}
\newtheorem{defi}[theo]{Definition}
\newtheorem{lemm}[theo]{Lemma}
\newtheorem{coro}[theo]{Corollary}

\newtheorem{rema}[theo]{Remark}

\begin{document}
\title{Geometric Structure of Affine Deligne-Lusztig Varieties for $\mathrm{GL}_3$}
\author{Ryosuke Shimada}
\date{}
\maketitle

\begin{abstract}
In this paper we study the geometric structure of affine Deligne-Lusztig varieties $X_{\lambda}(b)$ for $\mathrm{GL}_3$ and $b$ basic.
We completely determine the irreducible components of the affine Deligne-Lusztig variety.
In particular, we classify the cases where all of the irreducible components are classical Deligne-Lusztig varieties times finite-dimensional affine spaces.
If this is the case, then the irreducible components are pairwise disjoint. 
\end{abstract}

\section{Introduction}
Let $k$ be a field with $q$ elements, and let $\bar{k}$ be an algebraic closure of $k$.
Let $F=k((t))$ and let $L =\bar{k}((t))$.
We write $\mathcal O=\bar{k}[[t]], \mathcal O_F=k[[t]]$ for the valuation rings of $L$ and $F$.
Let $\sigma$ denote the Frobenius morphism of $\bar{k}/k$ and also of $L/F$.

Let $G$ be a split connected reductive group over $k$ and let $T$ be a split maximal torus of it.
Let $B$ be a Borel subgroup of $G$ containing $T$. 
For a cocharacter $\lambda\in X_*(T)$, let $t^{\lambda}$ be the image of $t\in \mathbb G_m(F)$ under the homomorphism $\lambda\colon\mathbb G_m\rightarrow T$.

We fix a dominant cocharacter $\lambda\in X_*(T)$.
Then the affine Deligne-Lusztig variety $X_{\lambda}(b)$ is the locally closed reduced $\bar k$-subscheme of the affine Grassmannian defined as
$$X_{\lambda}(b)(\bar{k})=\{xG(\mathcal O)\in G(L)/G(\mathcal O)\mid x^{-1}b\sigma(x)\in G(\mathcal O)t^{\lambda}G(\mathcal O)\}.$$
Analogously, we can also define the affine Deligne-Lusztig varieties associated to arbitrary parahoric subgroups (especially Iwahori subgroups).

The affine Deligne-Lusztig variety $X_{\lambda}(b)$ carries a natural action (by left multiplication) by the group
$$J_b=J_b(F)=\{g\in G(L)\mid g^{-1}b\sigma(g)=b\}.$$
This action induces an action of $J_b$ on the set of irreducible components.

The geometric properties of affine Deligne-Lusztig varieties have been studied by many people.
For example, there is a simple criterion by Kottwitz and Rapoport to decide whether an affine Deligne-Lusztig variety is non-empty (see \cite{Gashi}).
Moreover, for $X_{\lambda}(b)\neq \emptyset$, we have an explicit dimension formula:
$$\mathop{\mathrm{dim}} X_{\lambda}(b)=\langle\rho, \lambda-\nu_b \rangle-\frac{1}{2}\mathrm{def}(b),$$
where $\nu_b$ is the Newton vector of $b$, $\rho$ is half the sum of the positive roots, and $\mathrm{def}(b)$ is the defect of $b$.
For split groups, the formula was obtained in \cite{GHKR} and \cite{Viehmann}.
Recently, the parametrization problem of top-dimensional irreducible components of $X_{\lambda}(b)$ was also solved.
See \cite{Nie} and \cite{ZZ}.

Besides the geometric properties as above, it is known that in certain cases, the (closed) affine Deligne-Lusztig variety admits a simple description.
Let $P$ be a standard rational maximal parahoric subgroup of $G(L)$, where $G$ is assumed to be simple.
Then for any minuscule cocharacter $\mu$ and for $b\in G(L)$,
G\"{o}rtz and He in \cite{GH} (see also \cite{GH2}) studied the following union of affine Deligne-Lusztig varieties:
$$X(\mu, b)_P=\{g\in G(L)/P\mid g^{-1}b\sigma (g)\in \bigcup_{w\in\mathrm{Adm}(\mu)}PwP\}.$$
They proved that if $(G, \mu, P)$ is of Coxeter type and if $b$ lies inside the basic $\sigma$-conjugacy class, then $X(\mu, b)_P$ is naturally a union of classical Deligne-Lusztig varieties.
Furthermore, the work \cite{GHN} studied the generalized affine Deligne-Lusztig variety $X(\mu, b)_{K'}$ associated to a standard parahoric subgroup $K'$.
The main result in \cite{GHN} determines when $X(\mu, b)_{K'}$ is naturally a union of classical Deligne-Lusztig varieties.
In particular, the existence of such a simple description is independent of $K'$.

In the Iwahori case, Chan and Ivanov \cite{CI} gave an explicit description of certain Iwahori-level affine Deligne-Lusztig varieties for $\mathrm{GL}_n$.
Each component of the disjoint decomposition described there is a classical Deligne-Lusztig variety times finite-dimensional affine space, and they point out the similarity between their description and the results in \cite{GH}.

In this paper, we study the geometry of affine Deligne-Lusztig varieties $X_{\lambda}(b)$ in the affine Grassmannian  for $G=\mathrm{GL}_3$ and $b$ basic.
For this, it is enough to consider the case for $b$ whose newton vector is of the form $(\frac{i}{3}, \frac{i}{3}, \frac{i}{3})$ ($i=0,1,2$).
For any dominant $\lambda\in X_*(T)$ with $X_{\lambda}(b)\neq \emptyset$, we completely determine the irreducible components of $X_{\lambda}(b)$ and the index set of them.
\begin{theo}
The irreducible components of $X_{\lambda}(b)$ are parameterized by the set $\bigsqcup_{\mu\in M}J_b/K_{\mu}$, where $M$ is a finite set consisting of certain dominant cocharacters $\mu$ determined by $\lambda$, and $K_{\mu}$ is the stabilizer of a lattice depending on $\mu$ under the action of $J_b$.
Each irreducible component is an affine bundle over a simple variety.
\end{theo}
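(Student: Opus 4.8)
The plan is to realise $X_\lambda(b)$ concretely as a space of $\mathcal O$-lattices in $L^3$ and to stratify it by the $J_b$-action. Under the identification $xG(\mathcal O)\leftrightarrow\Lambda=x\mathcal O^3$ one has $X_\lambda(b)=\{\Lambda\subset L^3\text{ a lattice}\mid\mathrm{inv}(\Lambda,b\sigma(\Lambda))=\lambda\}$, where $\mathrm{inv}$ denotes relative position. First I would fix a convenient basic representative $b$: for Newton vector $(\tfrac i3,\tfrac i3,\tfrac i3)$ with $i=1,2$, a cyclic (Coxeter-type) element all of whose entries lie in $F$, whose associated operator $\Pi=b$ satisfies $\Pi^3=t$ and generates the cyclic division algebra $D/F$ of invariant $\tfrac i3$ inside $\mathrm{GL}_3(L)$, so that $J_b\cong D^\times$; for $i=0$ take $b=1$, so $J_b=\mathrm{GL}_3(F)$. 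By the dimension formula recalled in the introduction, together with $\langle\rho,\lambda-\nu_b\rangle=\lambda_1-\lambda_3$ and $\mathrm{def}(b)\in\{0,2\}$, every irreducible component must have dimension $\lambda_1-\lambda_3$ (if $i=0$) resp.\ $\lambda_1-\lambda_3-1$ (if $i\ne0$); this numerology guides the entire argument.

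The heart of the matter is a combinatorial stratification. I would attach to each lattice $\Lambda\in X_\lambda(b)$ a discrete invariant $\mu$ recording how $\Lambda$ sits against a suitable $b\sigma$-stable structure --- the $\Pi$-adic lattice chain when $i\ne0$, the $F$-rational lattices when $i=0$ --- in the spirit of the ``semimodule'' data used in Viehmann's and He--Nie's analysis of $\mathrm{GL}_n$, obtaining locally closed $J_b$-stable strata $X_\lambda(b)_\mu$, each of which has $J_b$ permuting its irreducible components transitively. (Alternatively one may work at Iwahori level: since $\pi^{-1}(X_\lambda(b))=\bigsqcup_{w\in\mathrm{Adm}(\lambda)}X_w(b)$ for the projection $\pi$ from the affine flag variety, one can study each $X_w(b)$ for $w$ in the small, explicit admissible set of $\mathrm{GL}_3$ and push forward along $\pi$.) The key local statement, which I would prove by writing $\Lambda$ in coordinates adapted to a distinguished lattice $\Lambda_\mu$ in the stratum and solving $\mathrm{inv}(\Lambda,b\sigma(\Lambda))=\lambda$ outright, is that the irreducible component $S_\mu$ of $X_\lambda(b)_\mu$ through $\Lambda_\mu$ is an affine bundle over a variety $D_\mu$ with a transparent description --- a classical Deligne--Lusztig variety for a Levi of $\mathrm{GL}_3$, or a partial flag variety such as $\mathbb P^1$ or $\mathbb P^2$, or a product or fibration of these --- the base recording the ``flag part'' of $\Lambda$ that is forced to be $b\sigma$-compatible and the fibre recording the remaining free coordinates. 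This is exactly ``each irreducible component is an affine bundle over a simple variety''.

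It then remains to do the bookkeeping. Computing $\dim S_\mu$ as $\dim D_\mu$ plus the rank of the bundle and comparing with the target dimension isolates the finite set $M$ of those $\mu$ for which $\overline{S_\mu}$ is an irreducible component, the non-maximal strata being absorbed into these closures (checked by degenerating the coordinates). Since $\Lambda_\mu$ determines $S_\mu$ and conversely, the stabiliser of $\overline{S_\mu}$ in $J_b$ equals $\mathrm{Stab}_{J_b}(\Lambda_\mu)=:K_\mu$, and the $J_b$-orbit of $\overline{S_\mu}$ is $J_b/K_\mu$; hence the irreducible components of $X_\lambda(b)$ are exactly $\bigsqcup_{\mu\in M}J_b/K_\mu$, as asserted. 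In the sub-case where every $D_\mu$ is, up to the affine bundle, a classical Deligne--Lusztig variety times an affine space --- that is, where the $\sigma$-linear algebra never produces a genuine $\mathbb P^1$- or $\mathbb P^2$-base --- pairwise disjointness of the components follows from the coordinate description, since $\overline{S_\mu}\cap\overline{S_{\mu'}}\ne\emptyset$ would force one invariant to degenerate to the other, which the list $M$ excludes.

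The main obstacle is the local structure theorem: establishing uniformly over \emph{all} $\mu$, and not merely the top-dimensional ones that the existing literature on maximal components already treats, that $S_\mu$ is a genuine affine bundle over the claimed base of the claimed rank. This amounts to a hands-on analysis of the equation $\mathrm{inv}(\Lambda,b\sigma(\Lambda))=\lambda$ in lattice (or Iwahori-orbit) coordinates, tracking how the Frobenius twist interacts with each filtration jump; the smallness of the Weyl group $S_3$ keeps the case analysis bounded, but enumerating the $\mu$ that actually occur, identifying each $D_\mu$ as a Deligne--Lusztig variety naturally enough that the $J_b$-equivariance --- hence the clean index set $\bigsqcup_{\mu\in M}J_b/K_\mu$ --- is manifest, and checking that no non-maximal stratum contributes a further component, is where the real effort lies.
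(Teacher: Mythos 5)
Your overall strategy---stratify $X_\lambda(b)$ by a discrete invariant recording how $\Lambda$ sits against a $b\sigma$-stable reference (the $\Pi$-lattice chain for $b$ superbasic, the $F$-rational lattices for $b=1$), show each stratum is an affine bundle over a simple base, and use the dimension formula to isolate the top-dimensional ones---is exactly the skeleton of the paper's argument, and the identification $J_b\cong D^\times$, $\Pi^3=t$, and the stabilizer $K_\mu=\mathrm{Stab}_{J_b}(\Lambda_\mu)$ all match. Where you diverge is in the technical engine: the paper does not use semimodule coordinates or an Iwahori-level push-forward. Instead it works directly in the Bruhat--Tits building of $\mathrm{SL}_3$ and uses Kottwitz's retraction-of-galleries method (Lemmas \ref{LEMMA1.1}, \ref{LEMMA1.2}, Proposition \ref{invformula}) to compute $\mathrm{inv}'(P,b\sigma P)$ from the relative position $\mathrm{inv}'(Q,P)$ to the nearest special vertex $Q$. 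That computation is what produces both the explicit finite set $M$ (Lemmas \ref{invfinite}, \ref{invfinite2}) and, crucially, the fact that each stratum is \emph{closed} in $X_\lambda(b)$ (Lemmas \ref{invclosed}, \ref{invclosed2})---not merely locally closed with the non-maximal pieces absorbed into closures, as you argue.

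Two concrete gaps in your proposal. First, you record the invariant as ``a $\mu$'' but for $b=1$ the paper's stratum is indexed by a \emph{pair} $(Q,[\mu])$: the nearest vertex $Q\in\mathcal B_1$ carries the $J_b$-orbit information (transitivity of $\mathrm{SL}_3(F)$ on vertices of a fixed type gives $H_1/K_{[\mu]}$), while $[\mu]$ is $\mathrm{inv}'(Q,P)$. Your phrasing does not make clear that you need both, and without the $Q$-coordinate the claim that ``the $J_b$-orbit of $\overline{S_\mu}$ is $J_b/K_\mu$'' is not established. Second, and as you acknowledge yourself, the ``local structure theorem''---that the stratum is an affine bundle over $X_{\mathrm I}$, over $\Omega=\mathbb P^2\setminus\mathbb P^2(k)$, or over a point, of the precise rank needed---is stated but not proved. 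This is where essentially all of the paper's work lives (Lemma \ref{affinebundle} and Propositions \ref{geometricstructure}, \ref{geometricstructure2}, which track the bend-in-a-gallery condition through the Schubert-cell coordinates $J^\mu$), so the proposal as written is an outline of a plausible alternative route rather than a proof.
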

The group $J_b$ acts on the index set by left multiplication.
In most cases, the irreducible component is an open subscheme of a finite-dimensional affine space, which is also affine.
For more details, see Theorem \ref{ADLVgeometricstructure} and Theorem \ref{ADLVgeometricstructure2}.

As an immediate corollary of this result, we classify the cases where all of the irreducible components of $X_{\lambda}(b)$ are classical Deligne-Lusztig varieties times finite-dimensional affine spaces (Corollary \ref{ADLVgeometricstructure3}).
In such cases, $X_{\lambda}(b)$ is a disjoint union of the irreducible components.
\begin{theo}
Let $\lambda\neq 0$ be as in Corollary \ref{ADLVgeometricstructure3}, and let $\Omega$ be the Drinfeld upper half space over $k$ of dimension $2$.
\begin{enumerate}[(i)]
\item If $b=1$, then we have
\begin{align*}
X_{\lambda}(1)&\cong \bigsqcup_{J_1/K_0}\Omega\times \mathbb A \hspace{3.4cm} \text{or}\\
X_{\lambda}(1)&\cong (\bigsqcup_{J_1/K_1}\Omega\times \mathbb A)\sqcup (\bigsqcup_{J_1/K_2}\Omega\times \mathbb A)
\end{align*}
as $\bar k$-varieties, where $\mathbb A$ is a finite-dimensional affine space and $K_i$ is the stabilizer of a lattice under the action of $J_1$.
\item If $b$ has the newton vector of the form $(\frac{i}{3}, \frac{i}{3}, \frac{i}{3})$ ($i=1,2$), then we have
\begin{align*}
X_{\lambda}(b)&\cong \bigsqcup_{J_b/H_b}\mathbb A \hspace{2.7cm} \text{or}\\
X_{\lambda}(b)&\cong (\bigsqcup_{J_b/H_b}\mathbb A)\sqcup (\bigsqcup_{J_b/H_b}\mathbb A)
\end{align*}
as $\bar k$-varieties, where $\mathbb A$ is a finite-dimensional affine space and $H_b$ is the stabilizer of a lattice under the action of $J_b$.
\end{enumerate}
\end{theo}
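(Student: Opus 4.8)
The plan is to derive this final statement as a direct corollary of the structural description in Theorem \ref{ADLVgeometricstructure} and Theorem \ref{ADLVgeometricstructure2}, by pinning down exactly which cocharacters $\lambda$ make the ``simple variety'' in the general theorem into the Drinfeld upper half space (in the $b=1$ case) or into a point (in the non-basic cases), and checking that in those cases the finite set $M$ consists of either one or two cocharacters. First I would recall that by Corollary \ref{ADLVgeometricstructure3} the condition that every irreducible component be a classical Deligne--Lusztig variety times an affine space picks out a short explicit list of $\lambda$; for each such $\lambda$ the associated set $M$ of dominant cocharacters $\mu$ has either one or two elements, which already accounts for the dichotomy (single disjoint union versus union of two). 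I would then invoke the disjointness clause of the main theorem: once every component is $\mathrm{DL} \times \mathbb{A}$, the components are pairwise disjoint, so the union decompositions are genuinely disjoint unions of varieties, not merely set-theoretic identities up to closure.

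The next step is to identify the relevant classical Deligne--Lusztig varieties. For $\mathrm{GL}_3$, the nontrivial classical Deligne--Lusztig varieties arising here are those for a Coxeter element of length two in $S_3$; for the general linear group such a Deligne--Lusztig variety is well known to be isomorphic to the Drinfeld upper half space $\Omega$ of dimension $2$ over $k$ (the complement in $\mathbb{P}^2$ of all $k$-rational hyperplanes). So in the $b=1$ case each component, being $\mathrm{DL} \times \mathbb{A}$, is $\Omega \times \mathbb{A}$, and one reads off the index sets $J_1/K_0$ or $J_1/K_1, J_1/K_2$ from the parametrization $\bigsqcup_{\mu \in M} J_b/K_\mu$ in the main theorem, matching the stabilizers $K_\mu$ with the $K_i$ in the statement. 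For $b$ with Newton vector $(\tfrac{i}{3},\tfrac{i}{3},\tfrac{i}{3})$, $i = 1,2$, I would observe that $J_b$ is (the $F$-points of) an inner form of $\mathrm{GL}_3$ which is anisotropic modulo center — a division algebra of invariant $i/3$ — so it contains no nontrivial parahoric-level Deligne--Lusztig geometry, forcing the ``simple variety'' factor to be a single point and each component to be just $\mathbb{A}$; the index set is then $J_b/H_b$ for the appropriate lattice stabilizer $H_b$, again with one or two copies according to $|M|$.

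The main obstacle I anticipate is bookkeeping rather than any deep new input: I must (a) verify that for precisely the $\lambda$ in Corollary \ref{ADLVgeometricstructure3} the set $M$ has size one or two, never more, and correlate the two bullet cases with $|M| = 1$ versus $|M| = 2$ uniformly across $b = 1$ and $b$ non-basic; and (b) check the identification of stabilizers, i.e.\ that the $K_\mu$ (resp.\ $H_b$) occurring really are lattice stabilizers in $J_b$ and match the notation $K_i$ (resp.\ $H_b$) used in the statement. Step (a) is handled by going back to the explicit finite combinatorial description of $M$ in terms of $\lambda$ from the proof of Theorem \ref{ADLVgeometricstructure}, and step (b) by tracing through how the index set in the main theorem was constructed from orbits of $J_b$ on lattices. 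A secondary point to address carefully is the isomorphism between the relevant classical Deligne--Lusztig variety for $\mathrm{GL}_3$ and $\Omega$: I would cite the standard fact (going back to Deligne--Lusztig, and used in \cite{GH}) that the Deligne--Lusztig variety attached to a Coxeter element for $\mathrm{GL}_n$ over $\mathbb{F}_q$ is isomorphic to the Drinfeld upper half space of dimension $n-1$, specialized here to $n = 3$.
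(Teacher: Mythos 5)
Your proposal is correct and follows the same route as the paper: the statement is essentially a restatement of Corollary \ref{ADLVgeometricstructure3}, so one reads off the explicit isomorphisms from that corollary together with Theorems \ref{ADLVgeometricstructure} and \ref{ADLVgeometricstructure2}, identifies the Coxeter Deligne--Lusztig variety for $\mathrm{GL}_3$ with $\Omega$ (done in Section \ref{subvarieties}), and notes that $|M|$ is $1$ or $2$ in every listed case (with the corresponding $K_{[\mu]}$ being $K_0$, or $K_1$ and $K_2$). One small caveat: your aside that $J_b$ being a division algebra ``forces the simple variety factor to be a point'' is not the actual mechanism --- for generic $\lambda$ the superbasic components are $\mathbb{G}_m\times\mathbb{A}^{d-1}$, and the collapse to $\mathbb{A}^d$ is a consequence of the specific combinatorial constraints on $\lambda$ established in Proposition \ref{geometricstructure2} and Corollary \ref{ADLVgeometricstructure3}, which you are in any case invoking.
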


The strategy of the proof is as follows:
Using the Bruhat-Tits building of $\mathrm{SL_3}$, we decompose $X_{\lambda}(b)(\bar k)$ into closed subsets, which are actually irreducible components.
This can be checked using the dimension formula above.
Then we determine their geometric structure by embedding them into the Schubert cells.
The crucial ingredient of these processes is the method of Kottwitz in \cite{Kottwitz}.
\\

\textbf{Acknowledgments:} This paper is the author's master's thesis, written at the University of Tokyo.
The author is grateful to his advisor Yoichi Mieda for his encouragement and helpful comments.

\section{Definition and Basic Properties}
Let $k$ be a field with $q$ elements, and let $\bar{k}$ be an algebraic closure of $k$.
We set $F=k((t))$ and $L =\bar{k}((t))$.
Further, we write $\mathcal O=\bar{k}[[t]], \mathcal O_F=k[[t]]$ for the valuation rings of $L$ and $F$.
Let $\sigma$ denote the Frobenius morphism of $\bar{k}/k$. 
We extend $\sigma$ to the Frobenius morphism of $L/F$, i.e., $\sigma(\sum a_n t^n)=\sum\sigma(a_n)t^n$.

\subsection{The Affine Grassmannian}
In this subsection, we define the affine Grassmannian of $\mathrm{SL}_3$ and $\mathrm{GL}_3$, cf.\ \cite{Gortz}.
Set $G=\mathrm{SL}_3$ or $\mathrm{GL}_3$.
The loop group $LG$ of $G$ is the $k$-space given by the following functor:$$LG(R)=G(R((t))),$$ where $R$ is a $k$-algebra.
Similarly, we have the positive loop group $L^{+}G$, defined as $$L^{+}G(R)=G(R[[t]]),$$ where $R$ is a $k$-algebra.
The positive loop group is actually an (infinite-dimensional) scheme. 
\begin{defi}
The {\it affine Grassmanian} $\mathcal{G}rass_{G}$ for $G$ is the quotient $k$-space $LG/L^{+}G$.
\end{defi}
The quotient in the category of $k$-spaces is the sheafification of the presheaf quotient $R\mapsto LG(R)/L^{+}G(R)$.

The affine Grassmannian is an ind-scheme.
To check this, we define the notion of lattices.
Let $R$ be a $k$-algebra, and let $r \in \mathbb Z_{>0}$. The $R[[t]]$-submodule $R[[t]]^{3}\subset R((t))^{3}$ is called the {\it standard lattice} and denoted by $\Lambda_{R}$.

\begin{defi}
\label{latticedefi}
A {\it lattice} $\mathscr L\subset R((t))^{3}$ is a $R[[t]]$-module such that
\begin{enumerate}[(i)]
\item there exists $N \in \mathbb Z_{\geq 0}$ with $$t^{N}\Lambda_{R}\subseteq \mathscr L \subseteq t^{-N}\Lambda_{R},\ \mathrm{and}$$

\item the quotient $t^{-N}\Lambda_{R}/\mathscr L$ is locally free of finite rank over $R$.
\end{enumerate}
For $r\in \mathbb Z$, a lattice $\mathscr L$ is said to be $r$-{\it special} if $\bigwedge^{3}\mathscr L=t^{r}\Lambda_{R}$.
\end{defi}

We denote the set of all lattices in $R((t))^{3}$ by $\mathcal{L}att(R)$, and the set of all $0$-special lattices by $\mathcal{L}att^{0}(R)$.
We also define, for $N\geq 1$, subsets 
$$\mathcal{L}att^{(N)}(R)\subset \mathcal{L}att(R),\ \mathcal{L}att^{0,(N)}(R)\subset \mathcal{L}att^{0}(R),$$
where the number $N$ in part (i) of the definition of a lattice is fixed.
For $\mathcal{L}att^{0,(N)}$, the morphism of functors $$\mathcal{L}att^{0,(N)}(R)\rightarrow {\mathop{\mathrm{Grass}}}_{3N}(t^{-N}\Lambda_k/t^{N}\Lambda_k)(R),\mathscr L\mapsto \mathscr L/t^{N}\Lambda_R,$$
defines a closed embedding of $\mathcal{L}att^{0,(N)}$ 
into the Grassmann variety of $3N$-dimensional subspaces of the $6N$-dimensional $k$-vector space $t^{-N}\Lambda_k/t^{N}\Lambda_k$.
Thus we have a filtration of $\mathcal{L}att^{0}$ by closed subschemes:
$$\mathcal{L}att^{0}=\bigcup\mathcal{L}att^{0,(N)}.$$
This gives an ind-projective scheme structure of $\mathcal{L}att^{0}$ because each $\mathcal{L}att^{0,(N)}$ is a closed subscheme of the projective scheme.
Similarly, we can show that each subfunctor $\mathcal{L}att^{(N)}$ has a projective scheme structure.
So the filtration $$\mathcal{L}att=\bigcup\mathcal{L}att^{(N)}$$
gives an ind-projective structure of $\mathcal{L}att$.
The ind-scheme $\mathcal{L}att^{0}$ is integral.
See \cite[Propostion 6.4]{BL} and \cite[Theorem 6.1]{PR}, which include the case of positive characteristic and also deal with other groups.
On the other hand, $\mathcal{L}att$ is not reduced in general (cf.\  \cite[Example 2.9]{Gortz} for the case $\mathbb G_m$). 
However, we have $$(\mathcal{L}att)_{red}=\bigsqcup_{r\in \mathbb Z} \mathcal{L}att^{r},$$ where $\mathcal{L}att^{r}$ is the $k$-subspace of $\mathcal{L}att$ consisting of $r$-special lattices.

Finally, note that the affine Grassmannian for $\mathrm{GL}_3$ (resp.\ $\mathrm{SL}_3$) is naturally isomorphic, as a $k$-space, to $\mathcal{L}att$ (resp.\ $\mathcal{L}att^{0}$).
So the affine Grassmannian has an ind-scheme structure.
From now on, we write $X=\mathcal{G}rass_{\mathrm{GL}_3}, X^S=\mathcal{G}rass_{\mathrm{SL}_3}$.
Since $\mathcal{L}att^0$ is isomorphic to $\mathcal{L}att^r$ through left multiplication by $a_r\in \mathrm{GL}_n(k((t)))$ with $v(\det (a_r))=r$, we have an isomorphism of ind-schemes 
$$X_{red}\cong \bigsqcup_{r\in \mathbb Z} X^S$$
depending on the choice of $(a_r)_r$.

\subsection{Affine Deligne-Lusztig Varieties in the Affine Grassmannian}
We keep the notation above.
Moreover, we fix a maximal torus $T$ and a Borel subgroup $B$ of $G$.
In the case $G=\mathrm{GL}_3$, we always let $T$ be the torus of diagonal matrices, and we choose the subgroup of upper triangular matrices $B$ as a Borel subgroup.
For $G=\mathrm{SL}_3$, we make analogous choices of a maximal torus and a Borel subgroup.
We often denote by $T^S$ and $B^S$ the maximal torus and the Borel subgroup of $\mathrm{SL}_3$ to avoid confusion with those of $\mathrm{GL}_3$.
Finally we set $K=\mathrm{GL}_3(\mathcal O),K^S=\mathrm{SL}_3(\mathcal O)$.

We denote by $\Phi$ the set of roots given by the choice of $T$, and by $\Phi_+$ the set of positive roots distinguished by $B$.
We let $$X_*(T)_+=\{\lambda\in X_*(T)\mid \langle \alpha, \lambda \rangle\geq0\ \text{for all}\ \alpha\in \Phi_+\}$$ denote the set of dominant cocharacters.
Then the Cartan decomposition of $G(L)$ is given by 
$$G(L)=\bigcup_{\lambda\in X_*(T)_+}G(\mathcal O)t^{\lambda}G(\mathcal O).$$
We illustrate this with $G=\mathrm{GL}_3, \mathrm{SL}_3$. 
For $G=\mathrm{GL}_3$, let $\chi_{ij}$ be the character $T\rightarrow \mathbb G_m$ defined by $\mathrm{diag}(t_1,t_2,t_3)\mapsto t_i{t_j}^{-1}$.
Then we have $\Phi=\{\chi_{ij}\mid i\neq j\}, \Phi_+=\{\chi_{ij}\mid i< j\}$ with respect to our choice of $T$ and $B$. So the subset ${X_*(T)}_+\subset X_*(T)\cong \mathbb Z^3$ is equal to the set $\{(m_1,m_2, m_3)\in \mathbb Z^3\mid m_1\geq m_2 \geq m_3\}$ and thus we have $$\{t^{\lambda}\mid \lambda \in X_*(T)_+\}=\{\mathrm{diag}(t^{m_1},t^{m_2},t^{m_3})\in T\mid m_1\geq m_2\geq m_3, m_i\in \mathbb Z\}.$$
Similarly, for $G=\mathrm{SL}_3$, the set $\{t^{\lambda}\mid \lambda \in X_*(T)_+\}$ is equal to the set $$\{\mathrm{diag}(t^{m_1},t^{m_2},t^{m_3})\in T\mid m_1\geq m_2 \geq m_3, \sum_im_i=0, m_i\in \mathbb Z\}.$$
For $\lambda=(m_1,m_2, m_3),\mu=(m_1',m_2',m_3')\in X_*(T)_+$, we write $\lambda\preceq \mu$ if $m_1\le m_1', m_1+m_2\le m_1'+m_2', m_1+m_2+m_{3}=m_1'+m_2'+m_{3}'$.
This is called the dominance order.

Given $b\in G(L)$, its $\sigma$-{\it stabilizer} is the group $$\{g\in G(L)\mid  g^{-1}b\sigma(g)=b\}.$$
We denote by $J_b$ (resp.\ $J_b^S$) the $\sigma$-stabilizer for $\mathrm{GL}_3$ (resp.\ $\mathrm{SL}_3$).
\begin{lemm}
\label{Jb}
Let $b\in \mathrm{GL}_3(L)$. 
The restriction of $\eta=v_L\circ \det\colon\mathrm{GL}_3(L)\rightarrow \mathbb Z$ to $J_b$ is surjective.
\end{lemm}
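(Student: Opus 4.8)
The plan is to reduce $b$ to a standard representative of its $\sigma$-conjugacy class and then write down explicit elements of $J_b$ whose $\eta$-values generate $\mathbb Z$. The first observation is that $\eta(J_b)\subseteq\mathbb Z$ depends only on the $\sigma$-conjugacy class of $b$: if $b'=g_0^{-1}b\sigma(g_0)$ then $J_{b'}=g_0^{-1}J_bg_0$, and since $\eta(g_0^{-1}hg_0)=v_L(\det h)=\eta(h)$ we get $\eta(J_{b'})=\eta(J_b)$. By the Dieudonn\'e--Manin classification of isocrystals over $L$ (equivalently, Kottwitz's description of $\sigma$-conjugacy classes for $\mathrm{GL}_3$), we may therefore assume
\[
b=\bigoplus_{j=1}^{r}b_j,\qquad \sum_{j=1}^{r}n_j=3,
\]
where $b_j\in\mathrm{GL}_{n_j}(F)$ is the standard matrix of the simple isocrystal of slope $m_j/n_j$ with $\gcd(m_j,n_j)=1$: concretely $b_j$ sends $e_i\mapsto e_{i+1}$ for $i<n_j$ and $e_{n_j}\mapsto t^{m_j}e_1$, so that $\sigma(b_j)=b_j$ and $v_L(\det b_j)=m_j$.

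Next I would exhibit the elements. Fix any index $j$. Let $g_j\in\mathrm{GL}_3(L)$ act as $b_j$ on the $j$-th block and as the identity on the other blocks, and let $h_j\in\mathrm{GL}_3(L)$ act as $t\cdot\mathrm{id}$ on the $j$-th block and as the identity elsewhere. Blockwise, the condition $g_j^{-1}b\sigma(g_j)=b$ reduces in the $j$-th block to $\sigma(b_j)=b_j$ and in the other blocks to a triviality, so $g_j\in J_b$; likewise $h_j\in J_b$ because $t\cdot\mathrm{id}$ is central in $\mathrm{GL}_{n_j}(L)$ and $\sigma$-fixed. Then $\eta(g_j)=v_L(\det b_j)=m_j$ and $\eta(h_j)=v_L(t^{n_j})=n_j$, and since $\gcd(m_j,n_j)=1$ we obtain $\eta(J_b)\supseteq m_j\mathbb Z+n_j\mathbb Z=\mathbb Z$, proving surjectivity.

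All of the content is in the reduction step; once $b$ is in standard block form the two families of elements above are essentially forced, and the only points requiring care are that $\sigma(b_j)=b_j$ (used to place $b_j\in J_b$) and that scalars are central, so I do not expect a genuine obstacle. Alternatively, one could avoid explicit matrices by invoking the structure of $J_b$ as the group of $F$-points of an inner form of a Levi subgroup of $\mathrm{GL}_3$, i.e.\ a product of groups $\mathrm{GL}_m(D)$ with $D$ a central division $F$-algebra, under which $\eta$ becomes the sum of the valuations of the reduced norms; surjectivity of the reduced norm of a central simple algebra over the local field $F$ then gives the claim at once. I would prefer the first route, as it is self-contained and the case of $\mathrm{GL}_3$ is small enough to make the bookkeeping trivial.
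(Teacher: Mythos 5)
Your proof is correct and follows essentially the same strategy as the paper: reduce, using $\sigma$-conjugation invariance of $\eta(J_b)$, to a block-diagonal standard representative of the $\sigma$-conjugacy class (you use the Dieudonn\'e--Manin cyclic blocks, the paper cites the equivalent representatives from \cite[Example~4.6]{Gortz}), and then exhibit explicit elements of $J_b$ realizing every value of $\eta$. The paper leaves the final step as an assertion that such $g$ are ``easily found''; your choices $g_j$ and $h_j$ with coprime $\eta$-values $m_j$, $n_j$ carry it out in detail and are sound.
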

\begin{proof}
Since conjugation does not change the determinant, it is enough to prove the statement for some representatives of the $\sigma$-conjugacy classes of $\mathrm{GL}_3(L)$.
As described in \cite[Example 4.6]{Gortz}, every $\sigma$-conjugacy class in $\mathrm{GL}_3(L)$ contains a representative $b$ of the following form: $b$ is a block diagonal matrix, and each block has the form
$$
\begin{pmatrix}
0 & t^{a_i+1}E_{k_i} \\
t^{a_i}E_{n_i-k_i} & 0 \\
\end{pmatrix}
\in \mathrm{GL}_{n_i}(L)
$$
Here $3=\sum_i n_i, a_i, k_i\in \mathbb Z, 0\le k_i <n_i$.
Using this description, we can easily find $g\in J_b$ such that $\eta(g)=r$ for any $r\in \mathbb Z$ and $b$.
\end{proof}
For $b\in \mathrm{GL}_3(L)$ set $H_b=\mathop{\mathrm{Ker}}(v_L\circ \det\colon J_b\rightarrow \mathbb Z)$.
Then $v_L\circ \det$ induces an isomorphism $J_b/H_b\cong \mathbb Z$ and if $b\in \mathrm{SL}_3(L)$, we clearly have $J_b^S\subseteq H_b$.

\begin{defi}
The {\it relative position map} is $$\mathrm{inv}\colon G(L)/G(\mathcal O)\times G(L)/G(\mathcal O)\rightarrow X_*(T)_+,$$
which maps a pair of cosets $(xG(\mathcal O), yG(\mathcal O))$ to the unique element $\lambda \in X_*(T)_+$ such that $x^{-1}y\in G(\mathcal O)t^{\lambda} G(\mathcal O)$.
\end{defi}

We now come to the definition of the affine Deligne-Lusztig variety.
\begin{defi}
The {\it affine Deligne-Lusztig variety} $X_{\lambda}(b)$ in the affine Grassmannian associated with $b\in G(L)$ and $\lambda\in {X_*(T)}_+$ is the locally closed subset of $\mathcal{G}rass_G$ given by $$X_{\lambda}(b)(\bar{k})=\{xG(\mathcal O)\in G(L)/G(\mathcal O)\mid x^{-1}b\sigma(x)\in G(\mathcal O)t^{\lambda}G(\mathcal O)\},$$
provided with the reduced sub-ind-scheme structure.
\end{defi}

In fact, $X_{\lambda}(b)$ is a scheme locally of finite type over $\bar k$; see \cite[Corollary 6.5]{HV}.

From now on, $X_{\lambda}(b)$ (resp.\ $X_{\lambda}^S(b)$) always denotes the affine Deligne-Lusztig variety for $\mathrm{GL}_3$ (resp.\ $\mathrm{SL}_3$).
Then $J_b$ (resp.\ $J_b^S$) acts by left multiplication on $X_{\lambda}(b)$ (resp.\ $X_{\lambda}^S(b)$).
For affine Deligne-Lusztig varieties, we have the following basic lemma.

\begin{lemm}
\label{ADLVbasic}
Let $\lambda=(m_1,m_2,m_3)\in X_*(T)_+$ and $b,g\in \mathrm{GL}_3(L)$.

\begin{enumerate}[(i)]
\item If $X_{\lambda}(b)$ is non-empty, then $v_L(\det(b))=m_1+m_2+m_3$.

\item The varieties $X_{\lambda}(b)$ and $X_{\lambda}(g^{-1}b\sigma(g))$ are isomorphic.

\item Let $c=\mathrm{diag}(t^m,t^m,t^m), m\in \mathbb Z$. Then $X_{\lambda}(b)$ and $X_{\lambda+M}(cb)$ are equal as subvarieties of $X$, where $M=(m, m, m)$.
\end{enumerate}
\end{lemm}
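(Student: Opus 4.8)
The plan is to verify each of the three statements by a direct manipulation of the defining relation $x^{-1}b\sigma(x)\in G(\mathcal O)t^{\lambda}G(\mathcal O)$, transporting it along the evident maps on $G(L)/G(\mathcal O)$. For (i), I would apply $v_L\circ\det$ to the defining relation for a point $xG(\mathcal O)\in X_{\lambda}(b)(\bar k)$, which exists by the non-emptiness hypothesis. Since $\det$ maps $G(\mathcal O)=\mathrm{GL}_3(\mathcal O)$ into $\mathcal O^{\times}$, every element of the double coset $G(\mathcal O)t^{\lambda}G(\mathcal O)$ has $v_L(\det(\cdot))=v_L(\det t^{\lambda})=m_1+m_2+m_3$. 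On the other hand $\det(x^{-1}b\sigma(x))=\det(x)^{-1}\det(b)\,\sigma(\det x)$ and $v_L\circ\sigma=v_L$, so $v_L(\det(x^{-1}b\sigma(x)))=v_L(\det b)$; comparing the two expressions gives the claim.

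For (ii), I would use left translation by $g^{-1}$ on $G(L)/G(\mathcal O)$. This is an automorphism of the affine Grassmannian as an ind-scheme, being induced by left multiplication on the loop group $LG$. A short computation shows that if $y=g^{-1}x$ then $y^{-1}(g^{-1}b\sigma(g))\sigma(y)=x^{-1}b\sigma(x)$, using $\sigma(g^{-1}x)=\sigma(g)^{-1}\sigma(x)$. Hence this automorphism carries $X_{\lambda}(b)(\bar k)$ bijectively onto $X_{\lambda}(g^{-1}b\sigma(g))(\bar k)$, and since both are equipped with the reduced locally closed subscheme structure, it restricts to an isomorphism of the two affine Deligne-Lusztig varieties.

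For (iii), the key point is that $c=\mathrm{diag}(t^m,t^m,t^m)=t^{M}$ is central in $\mathrm{GL}_3(L)$, so it normalizes $G(\mathcal O)$ and satisfies $x^{-1}(cb)\sigma(x)=c\,x^{-1}b\sigma(x)$. Therefore $c\cdot G(\mathcal O)t^{\lambda}G(\mathcal O)=G(\mathcal O)\,ct^{\lambda}\,G(\mathcal O)=G(\mathcal O)t^{\lambda+M}G(\mathcal O)$, and multiplying by $c$ shows that $x^{-1}b\sigma(x)\in G(\mathcal O)t^{\lambda}G(\mathcal O)$ if and only if $x^{-1}(cb)\sigma(x)\in G(\mathcal O)t^{\lambda+M}G(\mathcal O)$. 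Thus $X_{\lambda}(b)$ and $X_{\lambda+M}(cb)$ have exactly the same $\bar k$-points inside $G(L)/G(\mathcal O)$, and being both reduced they coincide as subvarieties of $X$.

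I do not expect any real obstacle here: all three assertions are formal consequences of the definition of $X_{\lambda}(b)$ and the basic properties of $v_L$, $\det$, $\sigma$, and the Cartan decomposition. The only points worth stating carefully are that in (ii) left translation is a morphism of ind-schemes, so one gets a genuine isomorphism of schemes rather than a mere bijection of $\bar k$-points, and that in (iii) the equality is literally an equality of reduced locally closed subschemes of $X$, which is immediate once the two underlying point sets are seen to agree.
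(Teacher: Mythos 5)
Your proof is correct and follows essentially the same route as the paper's: part (i) by applying $v_L\circ\det$ to the defining relation, part (ii) via the left-translation map $x\mapsto g^{-1}x$, and part (iii) by using centrality of $c=t^M$ to rewrite the double coset. The paper's proof is just terser; your write-up supplies the same steps with the details made explicit.
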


\begin{proof}
(i) follows from the equality $v_L(\det(x^{-1}b\sigma(x)))=v_L(\det(b))$.

(ii) The map $x\mapsto g^{-1}x$ gives an isomorphism.

(iii) We have $xK\in X_{\lambda}(b) \Leftrightarrow x^{-1}b\sigma(x)\in Kt^{\lambda}K \Leftrightarrow x^{-1}cb\sigma(x)\in Kt^{\lambda+M}K\Leftrightarrow xK\in X_{\lambda+M}(cb)$.
\end{proof}

Next proposition gives a decomposition of the affine Deligne-Lusztig variety corresponding to the decomposition of the affine Grassmannian  $X_{red}\cong \bigsqcup_r X^S$.
\begin{prop}
\label{ADLVdecomposition}
Set $\eta=v_L\circ \det\colon \mathrm{GL}_3(L)\rightarrow \mathbb Z$.
Then we have $$X_{\lambda}(b)\cong \bigsqcup_{J_b/H_b} (X_{\lambda}(b)\cap \eta^{-1}(0))$$ as $\bar{k}$-varieties, and $J_b$ acts on the set of these components by left multiplication.
\end{prop}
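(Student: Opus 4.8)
The plan is to recognize the asserted decomposition as the splitting of $X_\lambda(b)$ into the open and closed pieces cut out by the connected-component map of the affine Grassmannian, and then to use the surjectivity of $\eta$ on $J_b$ (Lemma~\ref{Jb}) to transport each piece isomorphically onto the piece lying over $0$.

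First I would record that $\eta=v_L\circ\det$ descends to a map $X\to\mathbb Z$. Since $\det(K)=\mathcal O^{\times}$ and $v_L(\mathcal O^{\times})=0$, the integer $\eta(x)$ depends only on the coset $xK$, so $xK\mapsto\eta(x)$ is well defined; under the identification of $X_{red}$ with the space of lattices it sends a lattice $\mathscr L=x\Lambda$ to the $r$ with $\bigwedge^{3}\mathscr L=t^{r}\Lambda$. Hence $\eta^{-1}(r)=\mathcal{L}att^{r}$, and the decomposition $X_{red}=\bigsqcup_{r}\mathcal{L}att^{r}$ recalled above is exactly the fibre decomposition of $\eta$, with each $\eta^{-1}(r)$ both open and closed in $X$. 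As $X_\lambda(b)$ is a reduced locally closed subscheme of $X$ (and a scheme locally of finite type over $\bar k$ by \cite[Corollary 6.5]{HV}), intersecting with these pieces yields a decomposition into open and closed subschemes
$$X_\lambda(b)=\bigsqcup_{r\in\mathbb Z}X_\lambda(b)^{r},\qquad X_\lambda(b)^{r}:=X_\lambda(b)\cap\eta^{-1}(r).$$
Here I would note that $\eta(x^{-1}b\sigma(x))=\eta(b)$ for every $x$ (as in the proof of Lemma~\ref{ADLVbasic}(i)), so this decomposition is governed purely by the value $\eta(x)$ and is automatically compatible with the defining condition involving $\lambda$.

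Next, for each $r\in\mathbb Z$ I would choose, via Lemma~\ref{Jb}, an element $g_r\in J_b$ with $\eta(g_r)=r$. Left multiplication by $g_r$ is an automorphism of the $\bar k$-variety $X_\lambda(b)$ because $J_b$ acts on it, and on cosets it carries $\eta^{-1}(s)$ isomorphically onto $\eta^{-1}(s+r)$; restricting, $g_r$ gives an isomorphism $X_\lambda(b)^{0}\xrightarrow{\ \sim\ }X_\lambda(b)^{r}$ with inverse $g_r^{-1}$. Combined with the previous step this gives
$$X_\lambda(b)=\bigsqcup_{r\in\mathbb Z}X_\lambda(b)^{r}\;\cong\;\bigsqcup_{r\in\mathbb Z}\bigl(X_\lambda(b)\cap\eta^{-1}(0)\bigr).$$
Finally I would re-index by $J_b/H_b$: since $H_b=\mathop{\mathrm{Ker}}(\eta|_{J_b})$ and $\eta|_{J_b}$ is surjective (Lemma~\ref{Jb}), $\eta$ induces an isomorphism $J_b/H_b\xrightarrow{\ \sim\ }\mathbb Z$, and I attach to the coset $gH_b$ the piece $g\cdot X_\lambda(b)^{0}=X_\lambda(b)^{\eta(g)}$. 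This is well defined on cosets because any $h\in H_b$ has $\eta(h)=0$, hence left multiplication by $h$ preserves $X_\lambda(b)^{0}$, and it is a bijection onto $\{X_\lambda(b)^{r}\}_{r\in\mathbb Z}$. For the last assertion, for $h\in J_b$ one has $h\cdot X_\lambda(b)^{r}=X_\lambda(b)^{r+\eta(h)}$, which matches the translation action of $J_b$ on $J_b/H_b\cong\mathbb Z$; so the above bijection is $J_b$-equivariant.

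I do not expect a serious obstacle: the statement is essentially a bookkeeping consequence of the connected-component decomposition of the affine Grassmannian together with Lemma~\ref{Jb}. The only points that need a little care are that $\eta$ is genuinely constant on $K$-cosets (so that $\eta^{-1}(r)\subset X$ makes sense and is open and closed) and that each $X_\lambda(b)^{r}$ is again a $\bar k$-variety, which follows from $X_\lambda(b)$ being locally of finite type over $\bar k$ and from the $\eta^{-1}(r)$ being open and closed; and the key input Lemma~\ref{Jb}, which is already available.
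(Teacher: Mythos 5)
Your proposal is correct and takes essentially the same approach as the paper: both use the decomposition of $X_{red}$ into the pieces $\eta^{-1}(r)=\mathcal{L}att^r$ together with Lemma~\ref{Jb} to choose elements of $J_b$ realizing each value of $\eta$, and then restrict to $X_\lambda(b)$. Your write-up is simply a more explicit unpacking of the paper's terse argument (which directly invokes the isomorphism $X_{red}\cong\bigsqcup_r X^S$ determined by the chosen $(a_r)_r$ and notes it restricts to the claimed isomorphism).
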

\begin{proof}
The scheme structure on $X_{\lambda}(b)$ is the reduced one, thus the inclusion $X_{\lambda}(b)\subset X$ factors through $X_{red}\rightarrow X$.
By Lemma \ref{Jb}, we can choose $(a_r)_{r\in \mathbb Z}$ with $a_0=1, a_r\in J_b$ and $v_L(\det(a_r))=r$.
Then the isomorphism $X_{red}\cong \bigsqcup_r X^S$ determined by $(a_r)_r$ restricts to the isomorphism
$$X_{\lambda}(b)\cong \bigsqcup_{J_b/H_b} (X_{\lambda}(b)\cap \eta^{-1}(0))$$
as $\bar{k}$-varieties.
\end{proof}

\begin{rema}
Let $b=1$ and let $\lambda\in X_*(T)_+$ with $X_{\lambda}(1)\neq \emptyset$.
Then $X_{\lambda}(1)\cap \eta^{-1}(0)$ is equal to $X_{\lambda}^S(1)$.
\end{rema}

In this paper, we will treat the affine Deligne-Lusztig variety $X_{\lambda}(b)$ with $b$ basic (i.e. its newton vector $\nu_b$ is central).
Then by Lemma \ref{ADLVbasic}, it is enough to consider the following three cases:
\begin{enumerate}[(i)]
\item $b=1$;

\item $b=b_1\coloneqq
\begin{pmatrix}
0 & 0 & t \\
1 & 0 & 0 \\
0 & 1 & 0\\
\end{pmatrix}$;

\item$b=b_2\coloneqq
\begin{pmatrix}
0 & t & 0 \\
0 & 0 & t \\
1 & 0 & 0 \\
\end{pmatrix}$.
\end{enumerate}

\section{The Bruhat-Tits Building}
\label{building}

\subsection{The Affine Building of $\mathrm{SL}_3$}
In this subsection, we recall the Bruhat-Tits buildings of the groups  $\mathrm{SL}_3(L)$ and $\mathrm{SL}_3(F)$.
Here we will discuss mainly over $L$, and the same construction and definitions also work over $F$.

\begin{defi}
The {\it Bruhat-Tits Building} of $\mathrm{SL}_3(L)$ is the simplicial complex $\mathcal B_{\infty}$ such that
\begin{enumerate}[(i)]
\item The set of vertices of $\mathcal B_{\infty}$ is the set of equivalence classes of $\mathcal O$-lattices $\mathscr{L}\subseteq L^3$, where the equivalence relation is given by homothety, i.e., $\mathscr{L}\sim \mathscr{L}'$ if and only if there exists $c\in L^\times$ such that $\mathscr{L}'=c\mathscr{L}$.

\item A set $\{L_1,\ldots,L_m\}$ of $m$ vertices is a simplex if and only if there exist representatives $\mathscr{L}_i$ of $L_i$ such that $$\mathscr{L}_1\supset \cdots \supset \mathscr{L}_m \supset t\mathscr{L}_1.$$
\end{enumerate}
\end{defi}

We denote the simplicial complex arising from this construction over $F$ by $\mathcal B_1$.
We see $\mathcal B_1$ as a subset of $\mathcal B_{\infty}$ by sending an $\mathcal O_F$-lattice $\mathscr L\subseteq F^3$ to $\mathscr{L}\otimes_{\mathcal O_F}\mathcal O\subseteq L^3$.
Maximal simplices are called {\it alcoves} or {\it chambers}.
For a vertex of $\mathcal B_{\infty}$ represented by $\mathscr{L}=g\Lambda_{\bar k}$ with $g\in \mathrm{GL}_3(L)$, we call the residue class of $v_L(\det(g))$ in $\mathbb Z/3$ the {\it type} of the vertex.
This number is independent of the choice of a representative.

We say that an $\mathcal O$-lattice $\mathscr L\subset L^3$ is {\it adapted} to a basis $f_1,f_2, f_3$ of $L^3$, if $\mathscr L$ has an $\mathcal O$-basis of the form $t^{i_1}f_1,t^{i_2}f_2,t^{i_3}f_3$.
The {\it apartment} corresponding to the basis $f_i$ is the subcomplex of $\mathcal B_{\infty}$ whose simplices consist of vertices given by lattices adapted to this basis.
The apartment corresponding to the standard basis is called the {\it main apartment} and denoted by $\mathcal A_M$.

The action of $\mathrm{SL}_3(L)$ (resp.\ $\mathrm{SL}_3(F)$) on all vertices with the same type in $\mathcal B_{\infty}$ (resp.\ $\mathcal B_1$) is transitive. 
One has a base vertex of type $0$ represented by $\Lambda_{\bar k}$ (resp.\ $\Lambda_k$) with stabilizer $\mathrm{SL}_3(\mathcal O)$ (resp.\ $\mathrm{SL}_3(\mathcal O_F)$).
Thus we have
\begin{align*}
  X^S(k) &=  \text{the set of all vertices of type $0$ in $\mathcal B_1$} \\
  X^S(\bar k) &=  \text{the set of all vertices of type $0$ in $\mathcal B_{\infty}$}.
\end{align*}
The Frobenius morphism $\sigma$ acts on $\mathcal B_{\infty}$ in the obvious way.
Then the full subcomplex consisting of all vertices fixed by $\sigma$ is $\mathcal B_1$.

\subsection{The Relative Position}
In this subsection, our goal is to describe the formula on the relative position of two vertices in $\mathcal B_{\infty}$.
For this, we mainly refer to \cite{Kottwitz} (and especially to the first three sections).
Moreover, see \cite{Garrett} for the general notion on buildings.

We begin by introducing an equivalence relation $\sim$ on $X_*(T)_+\subset \mathbb Z^3$.
We will say that $(m_1, m_2, m_3)$ and $(m_1',m_2',m_3')$ are equivalent (and write $(m_1, m_2, m_3)\sim (m_1',m_2',m_3')$) if there exists an integer $m$ such that $(m_1, m_2, m_3)=(m_1'+m,m_2'+m,m_3'+m)$.
Let $X_*(T)_+'$ be $X_*(T)_+$ modulo $\sim$, and $[\lambda]=[m_1, m_2, m_3]$ denotes the class of a dominant cocharacter $\lambda=(m_1, m_2, m_3)$.
For any $[\lambda],[\mu]\in X_*(T)_+'$, we write $[\lambda]\preceq [\mu]$ if there exists $M=(m,m,m)$ such that $\lambda+M\preceq \mu$.

For any vertex $P$ in the Bruhat-Tits building of $\mathrm{SL}_3(L)$, we can choose a matrix $x\in \mathrm{GL}_3(L)$ such that $P=[x\Lambda_{\bar k}]$, where $[\mathscr L]$ denotes the class of a lattice $\mathscr L$.
Then, for any two vertices $P=[x\Lambda_{\bar k}], Q=[y\Lambda_{\bar k}]\in \mathcal B_{\infty}$, one can define their relative position by (the class of) $\mathrm{inv}(xK,yK)\in X_*(T)_+'$, which is clearly independent of the choice of $x$ and $y$.
We denote it by $\mathrm{inv}'(P,Q)$.

Let $S$ be a subset of $\mathcal B_{\infty}$ contained in some apartment, and define $cl(S)$ to be the intersection of all the apartments which contain $S$.
If $S$ is contained in an apartment $\mathcal A$ of $\mathcal B_{\infty}$, then $cl(S)$ is equal to the intersection of all half-apartments in $\mathcal A$ containing $S$.
Let $S_1$ and $S_2$ be two faces in $\mathcal B_{\infty}$.
We define $cl(S_1, S_2)$ to be $cl(S_1\cup S_2)$.
If each of $S_1$ and $S_2$ consists of a single vertex of $\mathcal B_{\infty}$, then we will write $cl(P_1,P_2)$ for it, where $S_1=\{P_1\}$ and $S_2=\{P_2\}$.
Let $[e_1, e_2, e_3]=\mathrm{inv}'(P_2,P_1)$, and let $\mathcal A$ be an apartment containing $P_1$ and $P_2$.
If $e_1=e_2$ or $e_2=e_3$, then there is a wall in $\mathcal A$ which contains $P_1$ and $P_2$, and $cl(P_1,P_2)$ consists of the vertices of the wall which lie between $P_1$ and $P_2$.
\begin{center}
\begin{tikzpicture}
 \draw (-2,0)--(0,0)--(2,0)--(4,0)--cycle;
 \draw (-2,0)node[left]{$P_1$};
 \draw (4,0)node[right]{$P_2$};
 \draw (-2,0)node{$\bullet$};
 \draw (0,0)node{$\bullet$};
 \draw (2,0)node{$\bullet$};
 \draw (4,0)node{$\bullet$};
\end{tikzpicture}
\end{center}
In this case we will say that $cl(P_1,P_2)$ is a line segment.
Note that each minimal gallery $(C_0,\ldots,C_r)$ connecting $P_1$ and $P_2$ is of the form
\begin{center}
\begin{tikzpicture}
 \draw (-2,0)--(0,0)--(2,0)--(4,0)--cycle;
 \draw (-1,1.732)--(1,1.732)--(3,1.732)--cycle;
 \draw (-2,0)--(-1,1.732)--cycle;
 \draw (0,0)--(-1,1.732)--cycle;
 \draw (0,0)--(1,1.732)--cycle;
 \draw (2,0)--(1,1.732)--cycle;
 \draw (2,0)--(3,1.732)--cycle;
 \draw (4,0)--(3,1.732)--cycle;
 \draw (-2,0)node[left]{$P_1$};
 \draw (4,0)node[right]{$P_2$};
 \draw (-1,0.65)node{$C_0$};
 \draw (3,0.65)node{$C_r$};
\end{tikzpicture}
\end{center}
where the points in the bottom row are the points of $cl(P_1,P_2)$.
Such a gallery is called a {\it special gallery} connecting $P_1$ and $P_2$.
We call $C_0$ a first chamber of $cl(P_1,P_2)$ ($C_0$ is a chamber containing $P_1$).
If $e_1>e_2>e_3$, then $cl(P_1, P_2)$ forms a parallelogram.
\begin{center}
\begin{tikzpicture}
 \draw (-3,0)--(-2,0)--(-1,0)--(0,0)--(1,0)--(2,0)--cycle;
 \draw (-2.5,0.866)--(-1.5,0.866)--(-0.5,0.866)--(0.5,0.866)--(1.5,0.866)--(2.5,0.866)--cycle;
 \draw (-3,0)--(-2.5,0.866)--cycle;
 \draw (-2,0)--(-2.5,0.866)--cycle;
 \draw (-2,0)--(-1.5,0.866)--cycle;
 \draw (-1,0)--(-1.5,0.866)--cycle;
 \draw (-1,0)--(-0.5,0.866)--cycle;
 \draw (0,0)--(-0.5,0.866)--cycle;
 \draw (0,0)--(0.5,0.866)--cycle;
 \draw (1,0)--(0.5,0.866)--cycle;
 \draw (1,0)--(1.5,0.866)--cycle;
 \draw (2,0)--(1.5,0.866)--cycle;
 \draw (2,0)--(2.5,0.866)--cycle;
 \draw (-2,1.732)--(-1,1.732)--(0,1.732)--(1,1.732)--(2,1.732)--(3,1.732)--cycle;
 \draw (-2,1.732)--(-2.5,0.866)--cycle;
 \draw (-2,1.732)--(-1.5,0.866)--cycle;
 \draw (-1,1.732)--(-1.5,0.866)--cycle;
 \draw (-1,1.732)--(-0.5,0.866)--cycle;
 \draw (0,1.732)--(-0.5,0.866)--cycle;
 \draw (0,1.732)--(0.5,0.866)--cycle;
 \draw (1,1.732)--(0.5,0.866)--cycle;
 \draw (1,1.732)--(1.5,0.866)--cycle;
 \draw (2,1.732)--(1.5,0.866)--cycle;
 \draw (2,1.732)--(2.5,0.866)--cycle;
 \draw (3,1.732)--(2.5,0.866)--cycle;
 \draw (-1.5,2.598)--(-0.5,2.598)--(1.5,2.598)--(2.5,2.598)--(3.5,2.598)--cycle;
 \draw (-2,1.732)--(-1.5,2.598)--cycle;
 \draw (-1,1.732)--(-1.5,2.598)--cycle;
 \draw (-1,1.732)--(-0.5,2.598)--cycle;
 \draw (0,1.732)--(-0.5,2.598)--cycle;
 \draw (0,1.732)--(0.5,2.598)--cycle;
 \draw (1,1.732)--(0.5,2.598)--cycle;
 \draw (1,1.732)--(1.5,2.598)--cycle;
 \draw (2,1.732)--(1.5,2.598)--cycle;
 \draw (2,1.732)--(2.5,2.598)--cycle;
 \draw (3,1.732)--(2.5,2.598)--cycle;
 \draw (3,1.732)--(3.5,2.598)--cycle;
 \draw (-3,0)node[left]{$P_1$};
 \draw (3.5,2.598)node[right]{$P_2$};
 \draw (2,0)node[right]{$P_3$};
 \draw (-2.5,0.325)node{$C_0$};
 \draw (-2,0.5)node{$C_1$};
 \draw (-1.5,0.325)node{$C_2$};
 \draw (2.55,1.9)node{$C_{r-1}$};
 \draw (3,2.3)node{$C_r$};
 \draw[dotted, thick] (-1,0.4)--(1.7,0.4)--(2.5,1.7);
\end{tikzpicture}
\end{center}
In this case, the gallery $(C_0,\ldots,C_r)$ right above is called the special gallery connecting $P_1$ and $P_2$, where $\mathrm{inv}'(P_3,P_1)=[e_2-e_3,e_2-e_3,0]$.
We call $C_0$ the first chamber of $cl(P_1,P_2)$ ($C_0$ is the chamber containing $P_1$).

Let $P_1$ be a vertex of $\mathcal B_{\infty}$.
Let $C_1,C_2$  be two chambers containing $P_1$, and let $P_2$ be a vertex of $C_1$ such that $\mathrm{inv}'(P_2,P_1)=[1,0,0]$.
Then the following three figures illustrates three cases of the relative position of $C_1$ and $C_2$.
We say that $C_1$ and $C_2$ have {\it relative position I} (resp.\ {\it relative position I\hspace{-.1em}I}, resp.\ {\it relative position I\hspace{-.1em}I\hspace{-.1em}I}) if $cl(C_1,C_2)$ looks like Figure \ref{relative position 1} (resp.\ Figure \ref{relative position 2}, resp.\ Figure \ref{relative position 3}).

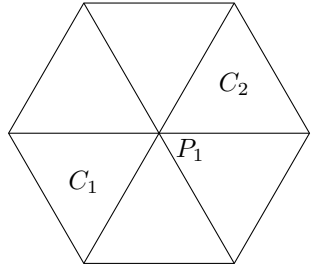
\begin{figure}[h]
\centering
\begin{tikzpicture}
 \draw (-2,0)--(-1,1.732)--(1,1.732)--(2,0)--(1,-1.732)--(-1,-1.732)--cycle;
 \draw (0,0)--(-2,0)--cycle;
 \draw (0,0)--(-1,1.732)--cycle;
 \draw (0,0)--(1,1.732)--cycle;
 \draw (0,0)--(2,0)--cycle;
 \draw (0,0)--(1,-1.732)--cycle;
 \draw (0,0)--(-1,-1.732)--cycle;
 \draw (0.1,0.05)node[below right]{$P_1$};
 \draw (1,0.65)node{$C_2$};
 \draw (-1,-0.65)node{$C_1$};
\end{tikzpicture}
\caption{Relative position I}
\label{relative position 1}
\end{figure}
\begin{figure}[h]
\begin{minipage}{0.5\columnwidth}
\centering
\begin{tikzpicture}
 \draw (-2,0)--(2,0)--(1,-1.732)--(-1,-1.732)--cycle;
 \draw (0,0)--(-2,0)--cycle;
 \draw (0,0)--(2,0)--cycle;
 \draw (0,0)--(1,-1.732)--cycle;
 \draw (0,0)--(-1,-1.732)--cycle;
 \draw (0,0)node[above]{$P_1$};
 \draw (-2,0)node[above]{$P_2$};
 \draw (-1,-0.65)node{$C_1$};
 \draw (1,-0.65)node{$C_2$};
\end{tikzpicture}
\caption{Relative position I\hspace{-.1em}I}
\label{relative position 2}
\end{minipage}
\begin{minipage}{0.5\columnwidth}
\centering
\begin{tikzpicture}
 \draw (-2,0)--(2,0)--(1,-1.732)--(-1,-1.732)--cycle;
 \draw (0,0)--(-2,0)--cycle;
 \draw (0,0)--(2,0)--cycle;
 \draw (0,0)--(1,-1.732)--cycle;
 \draw (0,0)--(-1,-1.732)--cycle;
 \draw (0,0)node[above]{$P_1$};
 \draw (-0.9,-1.8)node[above right]{$P_2$};
 \draw (-1,-0.65)node{$C_1$};
 \draw (1,-0.65)node{$C_2$};
\end{tikzpicture}
\caption{Relative position I\hspace{-.1em}I\hspace{-.1em}I}
\label{relative position 3}
\end{minipage}
\end{figure}

Let $\mathcal A$ be an apartment of $\mathcal B_{\infty}$ and $C$ a chamber contained in $\mathcal A$.
Then we denote by $\rho_{\mathcal A, C}$ the retraction of $\mathcal B_{\infty}$ onto $\mathcal A$ with center $C$.
From now on, we will consider the following situation:
Let $P_0,P_1,P_2$ belong to $\mathcal B_{\infty}$ with $P_1\neq P_0$ and $P_2\neq P_0$.
Let $(C_0,\ldots,C_r)$ (resp.\ $(D_0,\ldots,D_s)$) be a special gallery connecting $P_0$ to $P_1$ (resp.\ $P_0$ to $P_2$).
Moreover, we assume that $C_0$ and $D_0$ have relative position I, I\hspace{-.1em}I or I\hspace{-.1em}I\hspace{-.1em}I.
We want to compute $\mathrm{inv}'(P_1, P_2)$ from $\mathrm{inv}'(P_0, P_1)$ and $\mathrm{inv}'(P_0, P_2)$.
For this, the main machineries are the following two lemmas.

\begin{lemm}
\label{LEMMA1.1}
Let $C$ be a chamber of $\mathcal B_{\infty}$, and let $\mathcal A$ be an apartment containing $C$.
If a vertex $P$ belongs to $C$ and a vertex $Q$ belongs to $\mathcal B_{\infty}$, then $\mathrm{inv}'(P, Q)=\mathrm{inv}'(P, \rho (Q))$, where $\rho=\rho_{\mathcal A,C}$.
\end{lemm}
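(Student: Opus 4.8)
The plan is to deduce the statement from two standard facts about the building $\mathcal B_\infty$: first, that $C$ and $Q$ always lie in a common apartment, and second, that the relative position $\mathrm{inv}'$ of an ordered pair of vertices of an apartment is unchanged by type-preserving isomorphisms between apartments. Once these are in place, the lemma becomes a short formal argument, since $\rho_{\mathcal A,C}$ restricts to exactly such an isomorphism on any apartment through $C$ and fixes $C$ pointwise.

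Concretely, I would first invoke the building axiom (cf.\ \cite{Garrett}) that any two simplices of $\mathcal B_\infty$ lie in a common apartment, applied to the chamber $C$ and the vertex $Q$, to obtain an apartment $\mathcal A'$ with $C\subseteq\mathcal A'$ and $Q\in\mathcal A'$. By the construction of the retraction with center $C$, the restriction $\varphi\coloneqq\rho|_{\mathcal A'}\colon\mathcal A'\to\mathcal A$ is a simplicial isomorphism that is the identity on $\mathcal A\cap\mathcal A'$, in particular on $C$; fixing the three vertices of $C$, which exhaust the three types, it is automatically type-preserving. Hence $\varphi(P)=P$ because $P$ is a vertex of $C$, while $\varphi(Q)=\rho(Q)$. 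It then suffices to show $\mathrm{inv}'(P,Q)=\mathrm{inv}'(\varphi(P),\varphi(Q))$, i.e.\ that $\varphi$ preserves $\mathrm{inv}'$.

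For this last point, the crucial observation is that by definition $\mathrm{inv}'([x\Lambda_{\bar k}],[y\Lambda_{\bar k}])=[\mathrm{inv}(xK,yK)]$ depends only on $x^{-1}y$, hence is invariant under replacing $(x,y)$ by $(gx,gy)$ for any $g\in\mathrm{GL}_3(L)$; that is, $\mathrm{inv}'$ is $\mathrm{GL}_3(L)$-invariant. So it is enough to realize $\varphi$ by an element of $\mathrm{GL}_3(L)$. I would do this by choosing $h,h'\in\mathrm{GL}_3(L)$ carrying the main apartment $\mathcal A_M$ isomorphically onto $\mathcal A'$ and onto $\mathcal A$ respectively (the change-of-frame matrices for bases to which the lattices of $\mathcal A'$, resp.\ $\mathcal A$, are adapted; after rescaling the frame vectors one may take $h,h'$ type-preserving). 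Then $(h')^{-1}\circ\varphi\circ h$ is a type-preserving automorphism of $\mathcal A_M$, hence lies in the affine Weyl group of $\mathrm{SL}_3$ — for $A_2$ there are no nontrivial type-preserving diagram automorphisms — which is realized on $\mathcal A_M$ by $N_{\mathrm{SL}_3(L)}(T^S)\subseteq\mathrm{GL}_3(L)$; composing, $\varphi$ is the restriction to $\mathcal A'$ of an element of $\mathrm{GL}_3(L)$, and $\mathrm{GL}_3(L)$-invariance gives $\mathrm{inv}'(P,Q)=\mathrm{inv}'(\varphi(P),\varphi(Q))=\mathrm{inv}'(P,\rho(Q))$. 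The one step needing genuine care is precisely this claim that $\mathrm{inv}'$ is intrinsic to a pair of vertices of an apartment and not an artifact of the standard lattice $\Lambda_{\bar k}$ — i.e.\ the compatibility of the Cartan decomposition with the $\mathrm{GL}_3(L)$-action together with the identification of the type-preserving automorphisms of an apartment with the affine Weyl group; the existence of a common apartment and the behaviour of $\rho$ on apartments through $C$ are black boxes from \cite{Garrett} and \cite{Kottwitz}.
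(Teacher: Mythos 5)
Your argument is correct, and it is a genuine proof where the paper offers none: Shimada's proof of this lemma is simply the citation ``This is \cite[LEMMA 1.1]{Kottwitz},'' so there is no in-paper argument to compare against. Your chain of reductions is the standard one: (a) by the building axiom, $C$ and $Q$ lie in a common apartment $\mathcal A'$; (b) $\rho_{\mathcal A,C}|_{\mathcal A'}$ is, by the defining property of the retraction, the isomorphism $\mathcal A'\to\mathcal A$ fixing $\mathcal A\cap\mathcal A'$ pointwise, in particular fixing $C$, and since $C$ has one vertex of each of the three types this forces it to be type-preserving; (c) the relative position $\mathrm{inv}'$ is invariant under the diagonal $\mathrm{GL}_3(L)$-action, since $\mathrm{inv}(xK,yK)$ only sees $x^{-1}y$; and (d) $\varphi$ is realized by an element of $\mathrm{GL}_3(L)$. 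Step (d) is the only place requiring care, and your route through $\mathcal A_M$ is sound: the type-preserving simplicial automorphisms of the $\widetilde A_2$ Coxeter complex are exactly $W_a=W\ltimes Q^\vee$ (the group $\mathrm{Aut}(\Sigma)$ splits as $W_a\rtimes\mathrm{Aut}(\text{diagram})$, and every nontrivial diagram automorphism of $\widetilde A_2$ permutes the three types nontrivially), and $W_a$ is indeed the image of $N_{\mathrm{SL}_3(L)}(T^S)$ acting on $\mathcal A_M$.

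A small remark on economy: you can bypass the entire discussion of type-preserving automorphisms of the Coxeter complex and of diagram automorphisms by invoking strong transitivity directly. Since $\mathrm{SL}_3(L)$ acts transitively on pairs (apartment, chamber in it) and acts type-preservingly, there is $g\in\mathrm{SL}_3(L)$ with $g\mathcal A'=\mathcal A$ fixing $C$ pointwise; then $g|_{\mathcal A'}$ and $\varphi$ are both type-preserving isomorphisms $\mathcal A'\to\mathcal A$ agreeing on the chamber $C$, hence equal by the rigidity of thin chamber complexes. This hands you (d) in one stroke, without needing to classify $\mathrm{Aut}(\mathcal A_M)$. Either way, the proof you give is correct and complete.
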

\begin{proof}
This is \cite[LEMMA 1.1]{Kottwitz}.
\end{proof}

\begin{lemm}
\label{LEMMA1.2}
Let $\mathcal A$ be an apartment of $\mathcal B_{\infty}$, $C$ a chamber of $\mathcal A$ and $\rho=\rho_{\mathcal A,C}$.
Let $E$ be an edge of $\mathcal B_{\infty}$, and let $C_1$ (resp.\ $C_2$) be the chamber of $\mathcal A$ containing $\rho(E)$ which is on the same (resp.\ opposite) side as $C$ of the wall of $\mathcal A$ containing $\rho(E)$.
Then there is a unique chamber $C'$ containing $E$ such that $\rho(C')=C_1$, and for every chamber $C''$ containing $E$ which is distinct from $C'$, we have $\rho(C'')=C_2$.
\end{lemm}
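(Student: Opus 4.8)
The plan is to deduce the statement from two standard facts about a retraction $\rho=\rho_{\mathcal A,C}$ onto an apartment centred at a chamber $C\in\mathcal A$: first, that $\rho$ restricts to a (type-preserving) isomorphism $\mathcal A'\xrightarrow{\sim}\mathcal A$ on every apartment $\mathcal A'$ containing $C$, and in particular that $\rho$ preserves gallery distance to $C$, i.e.\ $d(C,\rho(D))=d(C,D)$ for every chamber $D$ of $\mathcal B_\infty$; and second, the gate property of a rank-one residue. Here "edge" means a panel, which lies on a unique wall of $\mathcal B_\infty$ and is a face of exactly two chambers of any apartment containing it.

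First I would fix a chamber $D_0\supseteq E$ and, by the apartment axiom, an apartment $\mathcal A'$ containing both $C$ and $D_0$ (hence $E$). Inside $\mathcal A'$ the edge $E$ lies on a unique wall $H'$; let $C'$ be the chamber of $\mathcal A'$ containing $E$ on the same side of $H'$ as $C$, and $C'''$ the other one. Since $\phi\coloneqq\rho|_{\mathcal A'}\colon\mathcal A'\to\mathcal A$ is an isomorphism fixing $C$, it carries $H'$ to the unique wall of $\mathcal A$ through $\rho(E)$ (the wall $H$ appearing in the statement) and carries the half-apartment of $\mathcal A'$ on $C$'s side of $H'$ onto the half-apartment of $\mathcal A$ on $C$'s side of $H$. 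Hence $\rho(C')=C_1$ and $\rho(C''')=C_2$. I would also record the elementary fact that $d(C,C_2)=d(C,C_1)+1$: the set of walls of $\mathcal A$ separating $C$ from $C_2$ is exactly the set separating $C$ from $C_1$ together with $H$, since $C_1$ and $C_2$ differ only across $H$ and $C$ lies on $C_1$'s side of $H$.

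Next I would bring in the gate. Let $\mathcal R_E$ be the residue of all chambers of $\mathcal B_\infty$ containing $E$, and let $g$ be its gate with respect to $C$: the unique chamber of $\mathcal R_E$ nearest to $C$, with $d(C,D)=d(C,g)+1$ for every $D\in\mathcal R_E$ distinct from $g$. I claim $g=C'$. Indeed $\rho(C')=C_1$ gives $d(C,C')=d(C,\rho(C'))=d(C,C_1)$, so $d(C,g)\le d(C,C')=d(C,C_1)$; conversely, for any $D\in\mathcal R_E$ the chamber $\rho(D)$ contains $\rho(E)$, so $\rho(D)\in\{C_1,C_2\}$ and $d(C,D)=d(C,\rho(D))\ge d(C,C_1)$, in particular $d(C,g)\ge d(C,C_1)$. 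Thus $d(C,g)=d(C,C')$, and uniqueness of the nearest chamber forces $g=C'$.

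Finally I would assemble the conclusion: $C'=g$ is the unique chamber of $\mathcal R_E$ at distance $d(C,C_1)$ from $C$ (every other member of $\mathcal R_E$ is at distance $d(C,C_1)+1=d(C,C_2)$), and $\rho$ sends a chamber of $\mathcal R_E$ to $C_1$ or $C_2$ according to whether its distance from $C$ is $d(C,C_1)$ or $d(C,C_2)$. Hence $\rho(C')=C_1$, $C'$ is the only chamber containing $E$ that maps to $C_1$, and every $C''\supseteq E$ with $C''\neq C'$ satisfies $\rho(C'')=C_2$. The only genuinely substantive input beyond bookkeeping is that $\rho$ restricted to an apartment through $C$ is a bona fide isomorphism onto $\mathcal A$ respecting half-apartments and gallery distances; getting that interplay stated correctly is the main point, and everything else is the gate property and a wall-counting argument. (This is LEMMA 1.2 of \cite{Kottwitz}.)
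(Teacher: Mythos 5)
Your proof is correct. The paper itself gives no argument for this lemma; its stated proof is the citation \cite[LEMMA 1.2]{Kottwitz}, so there is no in-paper proof to compare against. Your self-contained argument rests on two standard facts about the retraction $\rho=\rho_{\mathcal A,C}$: that it restricts to a type-preserving isomorphism onto $\mathcal A$ on any apartment through $C$ (hence respects half-apartments through $E$ and preserves gallery distance to $C$), and the gate (projection) property of the rank-one residue $\mathcal R_E$ of chambers containing $E$. The wall-counting step $d(C,C_2)=d(C,C_1)+1$, the identification of the gate with $C'$, and the conclusion that $\rho$ sends $D\in\mathcal R_E$ to $C_1$ or $C_2$ according to whether $d(C,D)$ equals $d(C,C_1)$ or $d(C,C_2)$, are all sound. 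This is the standard building-theoretic argument (as in Garrett or Abramenko--Brown) and is surely in the same spirit as Kottwitz's own proof; the paper simply did not reproduce it.
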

\begin{proof}
This is \cite[LEMMA 1.2]{Kottwitz}.
\end{proof}

Let $\mathcal A$ be an apartment containing $C_0,\ldots, C_r$, and let $\rho=\rho_{\mathcal A, C_r}$ be the retraction of $\mathcal B_{\infty}$ onto $\mathcal A$ with center $C_r$.
Our method will be to retract the gallery $(D_0,\ldots, D_s)$ onto the apartment $\mathcal A$, using Lemma \ref{LEMMA1.2} each step of the way.
Lemma \ref{LEMMA1.1} guarantees that such retraction does not change $\mathrm{inv}'(P_1, P_2)$.

Let $E_i$ be the edge between $D_i$ and $D_{i+1}$.
If $\rho(D_i)$ is on the same side of $\rho(E_i)$ as $C_r$, then $\rho(D_{i+1})$ is the other chamber in $\mathcal A$ besides $\rho(D_i)$ containing $\rho(E_i)$.
However, if $\rho(D_i)$ is on the opposite side of $\rho(E_i)$ as $C_r$, then it may happen that $E_{i+1}$ retracts to $\rho(E_i)$.
In this case, we will say that ``a bend has occurred'' at the edge between $D_i$ and $D_{i+1}$.
There is at most one edge at which a bend occurs (\cite[p. 342]{Kottwitz}).
If $C_0$ and $D_0$ have relative position I, then a bend never occurs ($\rho(D_i)$ is always on the same side of $\rho(E_i)$ as $C_r$).
If $C_0$ and $D_0$ have relative position I\hspace{-.1em}I, then $\rho(D_i)$ is on the same side of $\rho(E_i)$ as $C_r$ for $i=-1, 0,\ldots, a$, as shown in Figure \ref{bend2}.
However $\rho(D_{a+2j-1})$ ($1\le j,a+2j-1\le s$) is on the side of $\rho(E_{a+2j-1})$ opposite from $C_r$, so a bend can occur at the edge $E_{a+2j-1}$.
One can show that the condition that a bend occur at the edge between $D_{a+2j_0}$ and $D_{a+2j_{0}-1}$ is
$$\text{$D_{a+2j}\subset cl(D_{a+2j-1},C_{2j})$ for $1\le j<j_0$, and 
$D_{a+2j_0}\nsubseteq cl(D_{a+2j_0-1},C_{2j_0})$}$$
or, equivalently,
$$\text{$C_{2j}\subset cl(D_{a+2j},C_{2j-1})$ for $1\le j<j_0$, and 
$C_{2j_0}\nsubseteq cl(D_{a+2j_0},C_{2j_0-1})$}.$$
If $C_0$ and $D_0$ have relative position I\hspace{-.1em}I\hspace{-.1em}I, then the condition that a bend occur at the edge between $D_{2j_0}$ and $D_{2j_{0}-1}$ is
$$\text{$D_{2j}\subset cl(D_{2j-1},C_{a+2j})$ for $1\le j<j_0$, and 
$D_{2j_0}\nsubseteq cl(D_{2j_0-1},C_{a+2j_0})$}$$
or, equivalently,
$$\text{$C_{a+2j}\subset cl(D_{2j},C_{a+2j-1})$ for $1\le j<j_0$, and 
$C_{a+2j_0}\nsubseteq cl(D_{2j_0},C_{a+2j_0-1})$}.$$
For more details on these conditions, see Case I\hspace{-.1em}I and Case I\hspace{-.1em}I\hspace{-.1em}I in \cite[Section 2]{Kottwitz}.
\begin{figure}[h]
\begin{center}
\begin{tikzpicture}
 \draw (2.5,-0.866)--(3.5,-0.866)--cycle;
 \draw (-4,0)--(-3,0)--(-2,0)--cycle;
 \draw (2,0)--(3,0)--cycle;
 \draw (-3.5,0.866)--(-2.5,0.866)--(-1.5,0.866)--cycle;
 \draw (1.5,0.866)--(2.5,0.866)--cycle;
 \draw (2.5, -0.866)--(2,0)--cycle;
 \draw (3.5, -0.866)--(3,0)--cycle;
 \draw (2.5, -0.866)--(3,0)--cycle;
 \draw (-4,0)--(-3.5,0.866)--cycle;
 \draw (-3,0)--(-3.5,0.866)--cycle;
 \draw (-3,0)--(-2.5,0.866)--cycle;
 \draw (-2,0)--(-2.5,0.866)--cycle;
 \draw (-2,0)--(-1.5,0.866)--cycle;
 \draw (2,0)--(1.5,0.866)--cycle;
 \draw (2,0)--(2.5,0.866)--cycle;
 \draw (3,0)--(2.5,0.866)--cycle;
 \draw (-2,1.732)--(-1,1.732)--(0,1.732)--(1,1.732)--(2,1.732)--cycle;
 \draw (-2,1.732)--(-2.5,0.866)--cycle;
 \draw (-2,1.732)--(-1.5,0.866)--cycle;
 \draw (-1,1.732)--(-1.5,0.866)--cycle;
 \draw (1,1.732)--(1.5,0.866)--cycle;
 \draw (2,1.732)--(1.5,0.866)--cycle;
 \draw (2,1.732)--(2.5,0.866)--cycle;
 \draw (-1.5,2.598)--(-0.5,2.598)--(1.5,2.598)--cycle;
 \draw (-2,1.732)--(-1.5,2.598)--cycle;
 \draw (-1,1.732)--(-1.5,2.598)--cycle;
 \draw (-1,1.732)--(-0.5,2.598)--cycle;
 \draw (0,1.732)--(-0.5,2.598)--cycle;
 \draw (0,1.732)--(0.5,2.598)--cycle;
 \draw (1,1.732)--(0.5,2.598)--cycle;
 \draw (1,1.732)--(1.5,2.598)--cycle;
 \draw (2,1.732)--(1.5,2.598)--cycle;
 \draw (-3.5,0.325)node{$C_r$};
 \draw (-1,2.3)node{$C_0$};
 \draw (-0.5,1.9)node{$D_{-1}$};
 \draw (0,2.3)node{$D_0$};
 \draw (1,2.3)node{$D_a$};
 \draw (1.54,1.9)node{$D_{a+1}$};
 \draw (3,-0.6)node{$D_s$};
\end{tikzpicture}
\end{center}
\caption{Computation for relative position I\hspace{-.1em}I}
\label{bend2}
\end{figure}

\begin{figure}[h]
\begin{center}
\begin{tikzpicture}
 \draw (-1,0)--(0,0)--(1,0)--(2,0)--cycle;
 \draw (-1.5,0.866)--(-0.5,0.866)--(0.5,0.866)--(1.5,0.866)--(2.5,0.866)--cycle;
 \draw (-1,0)--(-1.5,0.866)--cycle;
 \draw (-1,0)--(-0.5,0.866)--cycle;
 \draw (0,0)--(-0.5,0.866)--cycle;
 \draw (0,0)--(0.5,0.866)--cycle;
 \draw (1,0)--(0.5,0.866)--cycle;
 \draw (1,0)--(1.5,0.866)--cycle;
 \draw (2,0)--(1.5,0.866)--cycle;
 \draw (2,0)--(2.5,0.866)--cycle;
 \draw (-2,1.732)--(-1,1.732)--cycle;
 \draw (2,1.732)--(3,1.732)--(4,1.732)--cycle;
 \draw (-2,1.732)--(-1.5,0.866)--cycle;
 \draw (-1,1.732)--(-1.5,0.866)--cycle;
 \draw (-1,1.732)--(-0.5,0.866)--cycle;
 \draw (2,1.732)--(1.5,0.866)--cycle;
 \draw (2,1.732)--(2.5,0.866)--cycle;
 \draw (3,1.732)--(2.5,0.866)--cycle;
 \draw (-2.5,2.598)--(-1.5,2.598)--cycle;
 \draw (2.5,2.598)--(3.5,2.598)--(4.5,2.598)--cycle;
 \draw (-2,1.732)--(-2.5,2.598)--cycle;
 \draw (-2,1.732)--(-1.5,2.598)--cycle;
 \draw (-1,1.732)--(-1.5,2.598)--cycle;
 \draw (2,1.732)--(2.5,2.598)--cycle;
 \draw (3,1.732)--(2.5,2.598)--cycle;
 \draw (3,1.732)--(3.5,2.598)--cycle;
 \draw (4,1.732)--(3.5,2.598)--cycle;
 \draw (4,1.732)--(4.5,2.598)--cycle;
 \draw (-3,3.464)--(-2,3.464)--cycle;
 \draw (-3,3.464)--(-2.5,2.598)--cycle;
 \draw (-2,3.464)--(-2.5,2.598)--cycle;
 \draw (-2,3.464)--(-1.5,2.598)--cycle;
 \draw (0.5,0.325)node{$C_0$};
 \draw (-0.5,0.325)node{$C_a$};
 \draw (-2.5,3.15)node{$C_r$};
 \draw (1,0.67)node{$D_{-1}$};
 \draw (1.5,0.325)node{$D_0$};
 \draw (4,2.3)node{$D_s$};
\end{tikzpicture}
\end{center}
\caption{Computation for relative position I\hspace{-.1em}I\hspace{-.1em}I}
\label{bend3}
\end{figure}
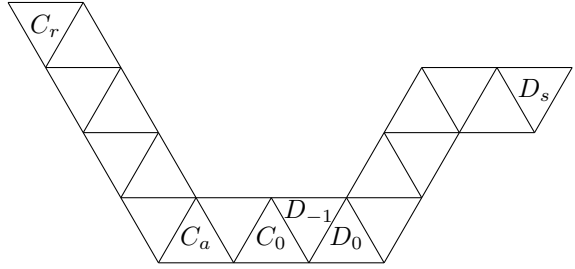

The retraction of $(D_0,\ldots,D_s)$ onto $\mathcal A$ allows us to compute $\mathrm{inv}'(P_1,P_2)$.
The next proposition is the summary of this computation (cf.\  \cite[LEMMA 2.1]{Kottwitz}).
\begin{prop}
\label{invformula}
Let $P_0,P_1,P_2$ belong to $\mathcal B_{\infty}$ with $P_1\neq P_0$ and $P_2\neq P_0$.
Let $[e_1,e_2,e_3]=\mathrm{inv}'(P_0,P_1)$ and let $[f_1,f_2,f_3]=\mathrm{inv}'(P_0,P_2)$.
Let $(C_0,\ldots,C_r)$ (resp.\ $(D_0,\ldots,D_s)$) be a special gallery connecting $P_0$ to $P_1$ (resp.\ $P_0$ to $P_2$).
Assume that $C_0$ and $D_0$ have relative position I, I\hspace{-.1em}I or I\hspace{-.1em}I\hspace{-.1em}I.
\begin{enumerate}[(i)]
\item If $C_0$ and $D_0$ have relative position I, then $$\mathrm{inv}'(P_1,P_2)=[f_1-e_3,f_2-e_2,f_3-e_1].$$
\item Let $m=\min\{e_1-e_2,f_2-f_3\}$.
If $C_0$ and $D_0$ have relative position I\hspace{-.1em}I, then $\mathrm{inv}'(P_1,P_2)=[f_1-e_3,f_2-e_2-j,f_3-e_1+j]$, where $j=m$ if no bend occurs, and $j=j_0$ if a bend occurs at the edge between $D_{a+2j_0-1}$ and $D_{a+2j_0}$ (Figure \ref{bend2}).
The possible values for $j_0$ are $1,2,\ldots, m-1$.
\item Let $m=\min\{f_1-f_2,e_2-e_3\}$.
If $C_0$ and $D_0$ have relative position I\hspace{-.1em}I\hspace{-.1em}I, then $\mathrm{inv}'(P_1,P_2)=[f_1-e_3-j,f_2-e_2+j,f_3-e_1]$, where $j=m$ if no bend occurs, and $j=j_0$ if a bend occurs at the edge between $D_{2j_0-1}$ and $D_{2j_0}$ (Figure \ref{bend3}).
The possible values for $j_0$ are $1,2,\ldots, m-1$.
\end{enumerate}
\end{prop}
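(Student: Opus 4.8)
The plan is to run the retraction argument of \cite[Section 2]{Kottwitz} inside a single apartment. Fix an apartment $\mathcal{A}$ of $\mathcal{B}_{\infty}$ containing the gallery $(C_0,\ldots,C_r)$; in particular $\mathcal{A}$ contains $P_0$ and $P_1$, and we may identify $\mathcal{A}$ with the standard $A_2$ triangulation of the plane (coordinatized as the main apartment $\mathcal{A}_M$) so that $\mathrm{inv}'$ of two vertices of $\mathcal{A}$ is computed by taking the dominant representative of the difference of their coordinates. Put $\rho=\rho_{\mathcal{A},C_r}$, the retraction onto $\mathcal{A}$ centered at the last chamber $C_r$, which contains $P_1$. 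By Lemma \ref{LEMMA1.1} with $C=C_r$, $P=P_1$ and $Q=P_2$, we get $\mathrm{inv}'(P_1,P_2)=\mathrm{inv}'(P_1,\rho(P_2))$. Thus the problem reduces to locating the single vertex $\rho(P_2)$ inside $\mathcal{A}$, after which the three formulas are read off from the coordinates of $P_1$, $\rho(P_2)$ and the base point $P_0$.

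To find $\rho(P_2)$, I would retract the special gallery $(D_0,\ldots,D_s)$ connecting $P_0$ to $P_2$ onto $\mathcal{A}$ one step at a time, applying Lemma \ref{LEMMA1.2} at the wall $E_i$ separating $D_i$ from $D_{i+1}$. The hypothesis that $C_0$ and $D_0$ have relative position I, I\hspace{-.1em}I or I\hspace{-.1em}I\hspace{-.1em}I fixes the image $\rho(D_0)$ (equivalently, the initial direction of the retracted $D$-gallery relative to $C_r$), and then Lemma \ref{LEMMA1.2} shows: as long as $\rho(D_i)$ lies on the same side of $\rho(E_i)$ as $C_r$, the retracted gallery steps ``straight'' to the unique other chamber of $\mathcal{A}$ across $\rho(E_i)$; once it is on the far side, it is possible that $\rho(E_{i+1})=\rho(E_i)$, i.e.\ that a bend occurs. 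As recalled before the statement, at most one bend occurs, and in relative position I none ever does, so in that case the retracted gallery runs straight all the way to the end; reading off coordinates then gives $\mathrm{inv}'(P_1,P_2)=[f_1-e_3,f_2-e_2,f_3-e_1]$, which is (i).

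In relative positions I\hspace{-.1em}I and I\hspace{-.1em}I\hspace{-.1em}I one must additionally decide whether and where the bend happens. Following Figures \ref{bend2} and \ref{bend3}, the retracted $D$-gallery first runs straight for a while (the indices $i=-1,0,\ldots,a$ in Figure \ref{bend2}, resp.\ the initial segment in Figure \ref{bend3}) and then can fold at one of the edges $E_{a+2j_0-1}$ (resp.\ $E_{2j_0-1}$); the precise edge is governed by the $cl(\cdot,\cdot)$-containment conditions stated just before the proposition. Translating ``bend at step $j_0$'' into the coordinates of the endpoint shows that the middle and last coordinates of $\mathrm{inv}'(P_1,P_2)$ are shifted by $j=j_0$ in case I\hspace{-.1em}I (resp.\ the first two coordinates are shifted by $j=j_0$ in case I\hspace{-.1em}I\hspace{-.1em}I), while the degenerate case of no bend corresponds to $j=m$ with $m=\min\{e_1-e_2,f_2-f_3\}$ (resp.\ $m=\min\{f_1-f_2,e_2-e_3\}$); this also pins down the range $1\le j_0\le m-1$. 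The main obstacle is exactly this bookkeeping: one has to set up explicit coordinates on the $A_2$ tiling, track the folded image of a possibly long special gallery through Lemma \ref{LEMMA1.2}, and check carefully that the stated containment conditions are equivalent to a bend occurring at the claimed edge and produce precisely the asserted coordinate shift. I would therefore organize the proof as three cases — I (no bend), then I\hspace{-.1em}I, then I\hspace{-.1em}I\hspace{-.1em}I — each reducing, via Lemmas \ref{LEMMA1.1} and \ref{LEMMA1.2}, to a finite planar computation, reusing Figures \ref{bend2} and \ref{bend3} to keep the retracted galleries in view.
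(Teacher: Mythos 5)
Your proposal follows exactly the strategy the paper uses (and which it attributes to \cite[Section 2]{Kottwitz}): fix an apartment $\mathcal A$ containing the $C$-gallery, use the retraction $\rho_{\mathcal A,C_r}$, invoke Lemma \ref{LEMMA1.1} to reduce to locating $\rho(P_2)$, retract the $D$-gallery step by step via Lemma \ref{LEMMA1.2}, and track the (at most one) bend in cases I\hspace{-.1em}I and I\hspace{-.1em}I\hspace{-.1em}I to read off the coordinate shift $j$. This is essentially the paper's own argument, which it presents as a summary of the retraction computation with reference to \cite[LEMMA 2.1]{Kottwitz}.
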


\begin{rema}
In the cases (ii) and (iii) of Proposition \ref{invformula}, it may happen that $m=0$.
If it occurs, then $cl(P_0, P_1)$ or $cl(P_0, P_2)$ is a line segment and we can take a pair $(C_0, D_0)$ such that $C_0$ and $D_0$ have relative position I.
However, this does not contradict the results in Proposition \ref{invformula} because $m=0$ implies that $j$ is always equal to $0$.
\end{rema}

\section{Affine Deligne-Lusztig Sets}
\label{ADLVset}
In the following, we will consider the set $X_{\lambda}(b)(\bar k)\cap \eta^{-1}(0)$.
Note that we have
$$X_{\lambda}(b)(\bar k)\cap \eta^{-1}(0)=\{xK^S\in \mathrm{SL}_3(L)/K^S\mid x^{-1}b\sigma(x)\in Kt^{\lambda}K\}.$$
In terms of buildings, this is equal to the set
$$\{P\mid \text{$P$ is a vertex of type $0$ in $\mathcal B_{\infty}$ with $\mathrm{inv}'(P,b\sigma P)=[\lambda]$}\}.$$

\subsection{Affine Deligne-Lusztig Varieties for $\mathrm{SL}_3$}
\label{b=1}
In this subsection, We will study the affine Deligne-Lusztig varieties for $\mathrm{SL}_3$ and $b=1$.

Let $Q$ be a vertex in $\mathcal B_1$, and let $[\mu]=[e_1,e_2,e_3]\in X_*(T)_+'$.
We denote by $\mathcal{G}r_{[\mu]}(Q)$ the non-empty subset
$$\{P\mid\text{$P$ is a vertex in $\mathcal B_{\infty}$ with $\mathrm{inv}'(Q,P)=[\mu]$}\}.$$
Clearly, all vertices in $\mathcal{G}r_{[\mu]}(Q)$ have the same type.
Let $i\in \{0,1,2\}$ be the type of vertices in this set.
Then, by choosing the $i$-special representative, we always see $\mathcal{G}r_{[\mu]}(Q)$ as a subset of $\mathcal{L}att^i(\bar k)$.
Define
\begin{equation*}
G_{[\mu]}^{\mathcal B_1}(Q)=
\left \{P\in \mathcal{G}r_{[\mu]}(Q) \left|
\begin{array}{l}
\text{there exists a minimal gallery from $Q$ to $P$}\\
\text{containing no vertices in $\mathcal B_1$ except $Q$}
\end{array}
\right.\right\}.
\end{equation*}
Let $\lambda=(m_1,m_2,m_3)\in X_*(T^S)_+$ such that $X_{\lambda}^S(1)$ is non-empty (for the explicit criterion, see Remark \ref{nonemptiness}).
Set
$$M_{\lambda}(1)= \{[\mu]\in X_*(T)_+'\mid \text{$X_{\lambda}^S(1)(\bar k)\cap G_{[\mu]}^{\mathcal B_1}(Q)\neq \emptyset$ for some vertex $Q$ in $\mathcal B_1$}\}.$$
 
\begin{lemm}
\label{invfinite}
Let notation be as above.
We have
\[
  M_{\lambda}(1)=
  \begin{cases}
    \mathbb \{[e_1,e_2,e_3]\in X_*(T)_+'\mid e_1-e_3=m_1\} & (m_2=0) \\
    \mathbb \{[e_1,e_2,e_3]\in X_*(T)_+'\mid e_1-e_3=m_1, m_{\mu}\geq -m_2\} & (m_2<0) \\
    \mathbb \{[e_1,e_2,e_3]\in X_*(T)_+'\mid e_3-e_1=m_3, m_{\mu}\geq m_2\} & (m_2>0), \\
  \end{cases}
\]
where $m_{\mu}=\min\{e_1-e_2,e_2-e_3\}$.
In particular, $M_{\lambda}(1)$ is a finite set.
\end{lemm}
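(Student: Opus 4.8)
The plan is to work entirely in the Bruhat-Tits building, using the description of $X_{\lambda}^S(1)(\bar k)\cap G_{[\mu]}^{\mathcal B_1}(Q)$ as the set of vertices $P$ with $\mathrm{inv}'(Q,P)=[\mu]$, $\mathrm{inv}'(P,\sigma P)=[\lambda]$, and some minimal gallery from $Q$ to $P$ meeting $\mathcal B_1$ only in $Q$, and then to compute $\mathrm{inv}'(P,\sigma P)$ out of $\mathrm{inv}'(Q,P)$ by Kottwitz's formula, Proposition \ref{invformula}. The first point I would record is that, since $Q\in\mathcal B_1$ is $\sigma$-fixed, $\mathrm{inv}'$ is $\sigma$-equivariant, and $\sigma$ acts trivially on $X_*(T)$ ($T$ being split), one has $\mathrm{inv}'(Q,\sigma P)=\mathrm{inv}'(Q,P)=[\mu]$; in particular $\sigma P$ again lies in $\mathcal{G}r_{[\mu]}(Q)$. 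Using transitivity of $\mathrm{SL}_3(F)$ on the vertices of $\mathcal B_1$ of each given type, I may assume $Q$ is a standard vertex of the main apartment, of the type forcing the vertices of $\mathcal{G}r_{[\mu]}(Q)$ to have type $0$.

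For "$\subseteq$", take $P$ in the set above and fix a special gallery $(C_0,\dots,C_r)$ from $Q$ to $P$ containing no vertex of $\mathcal B_1$ but $Q$; then $(\sigma C_0,\dots,\sigma C_r)$ is such a gallery from $Q$ to $\sigma P$. The key geometric claim is that $C_0$ and $\sigma C_0$ have relative position I, I\hspace{-.1em}I, or I\hspace{-.1em}I\hspace{-.1em}I. They share $Q$, and a common vertex $v\neq Q$ would lie in $C_0$ (hence outside $\mathcal B_1$) while $v\in\sigma C_0$ would force $v$ and $\sigma^{-1}v$ to be two vertices of $C_0$ of equal type, hence $v=\sigma^{-1}v\in\mathcal B_1$ — a contradiction; so in the link of $Q$ (the flag complex of $\mathbb P^2_{\bar k}$) the chambers $C_0,\sigma C_0$ are vertex-disjoint, hence at gallery distance $2$ or $3$, and a brief incidence-geometry argument matches distance $3$ with relative position I and the two distance-$2$ configurations with relative positions I\hspace{-.1em}I and I\hspace{-.1em}I\hspace{-.1em}I (these being mutually exclusive since $C_0\not\subset\mathcal B_1$). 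Then Proposition \ref{invformula} applied to $(P_0,P_1,P_2)=(Q,P,\sigma P)$, where $[e_1,e_2,e_3]=[f_1,f_2,f_3]=[\mu]$, gives $[\lambda]=[e_1-e_3,0,e_3-e_1]$ in case I; $[\lambda]=[e_1-e_3,-j,j-(e_1-e_3)]$ with $0\le j\le m_\mu$ in case I\hspace{-.1em}I; and $[\lambda]=[e_1-e_3-j,j,e_3-e_1]$ with $0\le j\le m_\mu$ in case I\hspace{-.1em}I\hspace{-.1em}I. Comparing coordinates with $\lambda=(m_1,m_2,m_3)$ (dominant, $m_1+m_2+m_3=0$), case I forces $m_2=0$ and $e_1-e_3=m_1$; case I\hspace{-.1em}I forces $m_2\le 0$, $e_1-e_3=m_1$, $m_\mu\ge -m_2$; and case I\hspace{-.1em}I\hspace{-.1em}I forces $m_2\ge 0$, $e_3-e_1=m_3$, $m_\mu\ge m_2$. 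Since exactly one sign case for $m_2$ occurs, this gives "$\subseteq$" in each of the three cases of the lemma.

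For "$\supseteq$", I would run this in reverse: given $[\mu]$ satisfying the stated constraint for the relevant sign of $m_2$, build inside a fixed apartment a special gallery $Q=C_0,C_1,\dots,C_r$ leaving $\mathcal B_1$ — choosing at each step the new vertex to be non-$k$-rational, which is possible over the infinite residue field — and ending at a vertex $P$ with $\mathrm{inv}'(Q,P)=[\mu]$, so that automatically $P\in G_{[\mu]}^{\mathcal B_1}(Q)$; then choose $C_0$ so that $\sigma C_0$ has relative position I, I\hspace{-.1em}I, or I\hspace{-.1em}I\hspace{-.1em}I with $C_0$ according as $m_2=0$, $m_2<0$, or $m_2>0$, and in the last two cases arrange the (unique) bend of the retraction of $(\sigma C_i)$ onto the apartment to occur at the edge making $j=|m_2|$ — exactly what $m_\mu\ge|m_2|$ permits. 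By Proposition \ref{invformula} this $P$ has $\mathrm{inv}'(P,\sigma P)=[\lambda]$, hence lies in $X_{\lambda}^S(1)(\bar k)$; alternatively one can simply write down an explicit lattice representing such a $P$.

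Finiteness is then immediate: in every case $e_1-e_3$ is determined (equal to $m_1$ or $-m_3$), and a class in $X_*(T)_+'$ with $e_1-e_3$ prescribed has only finitely many representatives modulo $\sim$. I expect the real work to be concentrated in two places: the combinatorial step in "$\subseteq$" — proving that $C_0$ and $\sigma C_0$ always sit in one of Kottwitz's relative positions I, I\hspace{-.1em}I, I\hspace{-.1em}I\hspace{-.1em}I and correctly reading off the bend parameter in terms of $m_2$ — and the explicit realization step in "$\supseteq$"; everything else is bookkeeping with Proposition \ref{invformula}.
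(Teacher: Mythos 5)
Your proof is correct and follows essentially the same route as the paper: concatenate the special gallery $(C_0,\dots,C_r)$ from $Q$ to $P$ with its Frobenius image $(\sigma C_0,\dots,\sigma C_r)$ and read off the possible $[\lambda]$ from Proposition~\ref{invformula}. Your explicit verification — via the type argument in the link of $Q$ — that $C_0$ and $\sigma C_0$ necessarily fall into one of Kottwitz's relative positions I, I\hspace{-.1em}I, I\hspace{-.1em}I\hspace{-.1em}I is a useful detail the paper leaves implicit. Two small cautions, neither of which changes the conclusion: the bend parameter in cases I\hspace{-.1em}I and I\hspace{-.1em}I\hspace{-.1em}I ranges over $\min\{1,m_\mu\}\le j\le m_\mu$, not $0\le j\le m_\mu$ (so $j=0$ only occurs when $m_\mu=0$); and your ``$\supseteq$'' sketch of building a gallery ``inside a fixed apartment'' and then ``arranging the bend'' needs rephrasing, since once the gallery is fixed the Frobenius and hence the retraction are determined and the bend is not freely adjustable. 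What the paper does (and what you presumably intend) is to construct the gallery chamber by chamber, invoking thickness of the building at each step to choose $C_{a+2i}$ so that the retraction of $(\sigma C_j)$ does or does not bend at the corresponding edge, which cleanly realizes every $j$ in $\{\min\{1,m_\mu\},\dots,m_\mu\}$.
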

\begin{proof}
Let $P\in G_{[\mu]}^{\mathcal B_1}(Q)$ and let $(C_0, \ldots, C_r)$ be a special gallery connecting $Q$ to $P$,
where $[\mu]=[e_1,e_2,e_3]\in M_{\lambda}(1)$.
Then it follows that $\lambda$ is one of the forms
\begin{enumerate}[(i)]
\item $(e_1-e_3,0,e_3-e_1)$
\item $(e_1-e_3,-j,e_3-e_1+j),\quad \min\{1,m_{\mu}\}\le j\le m_{\mu}$
\item $(e_1-e_3-j,j,e_3-e_1),\quad \min\{1,m_{\mu}\}\le j\le m_{\mu}$,
\end{enumerate}
where $m_{\mu}=\min\{e_1-e_2,e_2-e_3\}$ (and note that $m_{\mu}=0$ implies $j=0$).
Indeed, using Proposition \ref{invformula}, we can compute $\mathrm{inv}'(P, \sigma P)$ by connecting $(C_0, \ldots, C_r)$ and $(\sigma C_0, \ldots, \sigma C_r)$.
Then, by Lemma \ref{ADLVbasic}, $\lambda=(m_1, m_2, m_3)$ is the representative of $\mathrm{inv}'(P, \sigma P)$ with $m_1+m_2+m_3=0$.

Conversely, for any $[\mu]=[e_1,e_2,e_3]\in X_*(T)_+'$ with $e_1>e_2>e_3$ and $\lambda=(e_1-e_3,-j_0,e_3-e_1+j_0)\in X_*(T^S)_+$ with $1\le j_0< m_{\mu}$ (resp.\ $\lambda=(e_1-e_3,-m_{\mu},e_3-e_1+m_{\mu})\in X_*(T^S)_+$), there exists $P\in G_{[\mu]}^{\mathcal B_1}(Q)$ belonging to $X_{\lambda}^S(1)(\bar k)$.
If $m_{\mu}=1$, then this is obvious.
So we may assume $m_{\mu}\geq 2$.
To check the assertion, we use the condition that a bend occur (Section \ref{building}).
Let $a=2(e_1-e_2-1)$.
In the case (ii), $\lambda$ is of the form $(e_1-e_3,-j_0,e_3-e_1+j_0)$ (resp.\ $\lambda=(e_1-e_3,-m_{\mu},e_3-e_1+m_{\mu})$) if and only if
\begin{align*}
&\text{$\sigma C_{a+2j}\subset cl(\sigma C_{a+2j-1},C_{2j})$ for $1\le j<j_0$, and}\\
&\sigma C_{a+2j_0}\nsubseteq cl(\sigma C_{a+2j_0-1},C_{2j_0})\\
&\text{(resp.\ $\sigma C_{a+2i}\subset cl(\sigma C_{a+2i-1},C_{2i})$ for $1\le i<m_{\mu}$).}
\end{align*}
So, for any $j$ with $1\le j<j_0$, $\sigma C_{a+2j}$ (hence $C_{a+2j}$) is the unique chamber determined by the gallery $(C_0, \ldots, C_{2j}, \ldots, C_{a+2j-1})$, and $\sigma C_{a+2j_0}$ (hence $C_{a+2j_0}$) is a chamber distinct from the one uniquely determined by the gallery $(C_0, \ldots, C_{2j_0}, \ldots, C_{a+2j_0-1})$ (such $C_{a+2j_0}$ exists because the Bruhat-Tits building of $\mathrm{SL}_3(L)$ is actually a {\it thick} building).
Similarly, for any $i$ with $1\le i<m_{\mu}$, $\sigma C_{a+2i}$ (hence $C_{a+2i}$) is the unique chamber determined by the gallery $(C_0, \ldots, C_{2i}, \ldots, C_{a+2i-1})$.
This proves the existence of $P\in G_{[\mu]}^{\mathcal B_1}(Q)$ belonging to $X_{\lambda}^S(1)$ for any $\lambda$ as in (ii) above, and the same is true for any $\lambda$ as in (iii).

In any case, the positive value $e_1-e_3$ is determined by $\lambda$, and we can always choose the representative of $[\mu]$ with $e_2=0, e_1\geq 0\geq e_3$.
Clearly, the set of tuples $(e_1,e_3)$ satisfying these conditions is finite, and thus the lemma follows.
\end{proof}

\begin{lemm}
\label{invclosed}
Let $Q$ be a vertex in $\mathcal B_1$.
If $X_{\lambda}^S(1)(\bar k)\cap G_{[\mu]}^{\mathcal B_1}(Q)$ is non-empty, then we have $$X_{\lambda}^S(1)(\bar k)\cap G_{[\mu]}^{\mathcal B_1}(Q)=X_{\lambda}^S(1)(\bar k)\cap \mathcal{G}r_{[\mu]}(Q)=X_{\lambda}^S(1)(\bar k)\cap \overline{\mathcal{G}r_{[\mu]}(Q)},$$
where $\overline{\mathcal{G}r_{[\mu]}(Q)}$ denotes the closure of $\mathcal{G}r_{[\mu]}(Q)$ in $X^S$.
In particular, $X_{\lambda}^S(1)(\bar k)\cap G_{[\mu]}^{\mathcal B_1}(Q)$ is closed in $X_{\lambda}(1)(\bar k)$.
\end{lemm}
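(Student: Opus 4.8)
The plan is first to reduce the statement to a single set‑theoretic inclusion. The inclusions $G_{[\mu]}^{\mathcal B_1}(Q)\subseteq\mathcal{G}r_{[\mu]}(Q)\subseteq\overline{\mathcal{G}r_{[\mu]}(Q)}$ are immediate, and $\overline{\mathcal{G}r_{[\mu]}(Q)}$ is a Schubert variety, hence closed in $X^S$; since $X_{\lambda}^S(1)(\bar k)=X_{\lambda}(1)(\bar k)\cap\eta^{-1}(0)$ is open and closed in $X_{\lambda}(1)(\bar k)$ by Proposition \ref{ADLVdecomposition}, the whole assertion follows once we prove
$$X_{\lambda}^S(1)(\bar k)\cap\overline{\mathcal{G}r_{[\mu]}(Q)}\subseteq G_{[\mu]}^{\mathcal B_1}(Q).$$
By the closure relations for Schubert varieties one has $\overline{\mathcal{G}r_{[\mu]}(Q)}(\bar k)=\bigsqcup_{[\nu]\preceq[\mu]}\mathcal{G}r_{[\nu]}(Q)$, so it suffices to fix a vertex $P$ of type $0$ with $\mathrm{inv}'(P,\sigma P)=[\lambda]$ and $[\nu]:=\mathrm{inv}'(Q,P)\preceq[\mu]$, and to prove that then $[\nu]=[\mu]$ and $P\in G_{[\nu]}^{\mathcal B_1}(Q)$.

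The core of the argument is a descent along the sub‑building $\mathcal B_1$, resting on two elementary geometric facts about the $\widetilde A_2$‑building. First: if $R$ is a vertex of $\mathcal B_1$ with $\mathcal B_1\cap cl(R,P)=\{R\}$, then there is a minimal gallery from $R$ to $P$ meeting $\mathcal B_1$ only in $R$ --- when $cl(R,P)$ is a genuine parallelogram this is witnessed by a special gallery, all of whose chambers lie inside $cl(R,P)$, and when $cl(R,P)$ is a line segment one builds such a gallery edge by edge, at each step using that only finitely many of the infinitely many available chambers have a vertex in $\mathcal B_1\setminus\{R\}$. Second: if $R$ is a vertex of $\mathcal B_1$ with $\mathcal B_1\cap cl(R,P)\neq\{R\}$, then, this intersection being a connected convex subcomplex containing $R$, it contains a vertex $R'$ adjacent to $R$; moreover $cl(R',P)\subsetneq cl(R,P)$, and hence $\mathrm{inv}'(R',P)\precneqq\mathrm{inv}'(R,P)$ for the dominance order.

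Now suppose, for contradiction, that $P\notin G_{[\nu]}^{\mathcal B_1}(Q)$. By the first fact this forces $\mathcal B_1\cap cl(Q,P)\neq\{Q\}$, so by the second fact we may replace $Q$ by an adjacent vertex of $\mathcal B_1$ with strictly smaller relative position to $P$; since only finitely many classes are $\preceq[\mu]$, iterating reaches a vertex $Q_0$ of $\mathcal B_1$ with $\mathcal B_1\cap cl(Q_0,P)=\{Q_0\}$ and $[\nu_0]:=\mathrm{inv}'(Q_0,P)\precneqq[\nu]\preceq[\mu]$. By the first fact $P\in G_{[\nu_0]}^{\mathcal B_1}(Q_0)$, whence $[\nu_0]\in M_{\lambda}(1)$; since also $[\mu]\in M_{\lambda}(1)$ and $[\nu_0]\precneqq[\mu]$, this contradicts the fact --- readily checked from the explicit description in Lemma \ref{invfinite} together with the constraint $e_1+e_2+e_3\equiv 0\pmod 3$ forced by $P$ and $Q$ having type $0$ --- that distinct elements of $M_{\lambda}(1)$ are incomparable for $\preceq$. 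Therefore $P\in G_{[\nu]}^{\mathcal B_1}(Q)$; then $[\nu]\in M_{\lambda}(1)$, and $[\nu]\preceq[\mu]$ with both lying in $M_{\lambda}(1)$ gives $[\nu]=[\mu]$, which completes the proof.

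I expect the main obstacle to be the two geometric facts of the second paragraph: the monotonicity statement $cl(R',P)\subsetneq cl(R,P)\Rightarrow\mathrm{inv}'(R',P)\precneqq\mathrm{inv}'(R,P)$, and above all the construction, in the line‑segment case, of a minimal gallery from $Q_0$ to $P$ disjoint from $\mathcal B_1$ away from $Q_0$; this last point requires controlling how $\mathcal B_1$ sits inside the successive links along the gallery, and is the genuinely delicate part.
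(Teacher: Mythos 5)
Your high-level plan --- reduce to a single inclusion, use the Schubert closure relation, descend along $\mathcal B_1$ to a nearest vertex, and invoke the $e_1-e_3$ constraint coming from Lemma \ref{invfinite} --- is in essence the same route the paper takes, so the structure is sound. However, the specific form of your ``Fact 2'' is wrong, and the error propagates into the termination and the final contradiction. If $R'$ is \emph{adjacent} to $R$ in $\mathcal B_1$, then $R$ and $R'$ have different types, hence the classes $\mathrm{inv}'(R,P)$ and $\mathrm{inv}'(R',P)$ have different residues of $e_1+e_2+e_3$ modulo $3$. The paper's relation $\preceq$ on $X_*(T)_+'$ only relates classes with the same residue modulo $3$ (since the shift $M=(m,m,m)$ must make the sums equal), so $\mathrm{inv}'(R',P)$ and $\mathrm{inv}'(R,P)$ are simply incomparable under $\preceq$. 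Consequently the chain $[\nu]\succ\cdots\succ[\nu_0]$ is not a $\preceq$-chain, the termination argument ``only finitely many classes are $\preceq[\mu]$'' does not apply, and the final step ``$[\nu_0]\precneqq[\mu]$, contradicting incomparability in $M_\lambda(1)$'' is vacuous: when $Q_0$ has a different type from $Q$, the classes $[\nu_0]$ and $[\mu]$ are incomparable for trivial reasons and no contradiction arises. A related slip is the appeal to ``$e_1+e_2+e_3\equiv 0\pmod 3$ forced by $P$ and $Q$ having type $0$'': $Q$ is an arbitrary vertex of $\mathcal B_1$, not necessarily of type $0$.

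The fix, and what the paper's proof actually tracks, is the coordinate-wise relation rather than $\preceq$. If $R'\in cl(R,P)$ is a vertex distinct from $R$ (in particular adjacent), then, choosing representatives with $R$ at the origin of a common apartment, one has $\mathrm{inv}'(R,P)-\mathrm{inv}'(R',P)\in X_*(T)_+\setminus\{(m,m,m)\}$, and this forces the strict decrease $e_1^{R}-e_3^{R} > e_1^{R'}-e_3^{R'}$. Combining this with the facts that $[\nu]\preceq[\mu]$ gives $e_1^{\nu}-e_3^{\nu}\le e_1^{\mu}-e_3^{\mu}$, and that (by the explicit formulas in the proof of Lemma \ref{invfinite}) $e_1-e_3$ takes a single value on $M_\lambda(1)$ determined by $\lambda$, the descent to $Q_0\ne Q$ would yield $e_1^{(0)}-e_3^{(0)}<e_1^{\mu}-e_3^{\mu}$ while $[\nu_0],[\mu]\in M_\lambda(1)$, a contradiction. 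This is exactly the inequality chain $e_1''-e_3''\le e_1'-e_3'\le e_1-e_3$ that the paper's proof uses. With this substitution your argument closes; your Fact~1 (building a $\mathcal B_1$-avoiding minimal gallery once $\mathcal B_1\cap cl(Q_0,P)=\{Q_0\}$) is the same input that the paper's Proposition \ref{invdecomposition} relies on and is not spelled out there either.
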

\begin{proof}
It is enough to show $X_{\lambda}^S(1)(\bar k)\cap G_{[\mu]}^{\mathcal B_1}(Q)=X_{\lambda}^S(1)(\bar k)\cap \overline{\mathcal{G}r_{[\mu]}(Q)}$.
Note that we have $$\overline{\mathcal{G}r_{[\mu]}(Q)}=\bigcup_{[\mu']\preceq [\mu]}\mathcal{G}r_{[\mu']}(Q).$$
Set $\mu=(e_1,e_2,e_3), \mu'=(e_1',e_2',e_3')$ with $[\mu']\preceq [\mu]$.
Let $\mu''=(e_1'',e_2'',e_3'')$ be a dominant cocharacter satisfying $\mu'-\mu''\in X_*(T)_+$.
Then it suffices to show that $X_{\lambda}^S(1)(\bar k)\cap G^{\mathcal B_1}_{[\mu'']}(Q)$ is the empty set unless $[\mu'']=[\mu]$.
Indeed, for any $P\in \overline{\mathcal{G}r_{[\mu]}(Q)}$, there exists such $\mu''$ with $P\in G^{\mathcal B_1}_{[\mu'']}(Q)$.

We may assume that $\mu'\preceq \mu$.
We have $e_1''-e_3''\le e_1'-e_3'\le e_1-e_3$.
The formula in the proof of Lemma \ref{invfinite} shows that if $P\in G^{\mathcal B_1}_{[\mu'']}(Q)$ is contained in $X_{\lambda}^S(1)(\bar k)$, then $e_1''-e_3''=e_1-e_3$, and this equation implies $[\mu'']=[\mu']=[\mu]$.
So $X_{\lambda}^S(1)(\bar k)\cap G^{\mathcal B_1}_{[\mu'']}(Q)\neq \emptyset$ if and only if $[\mu'']=[\mu]$.
The last assertion follows from Proposition \ref{ADLVdecomposition}.
\end{proof}

For any $\lambda\in X_*(T^S)_+$ such that $X_{\lambda}^S(1)$ is non-empty, set
$$\mathcal P_{\lambda}(1)=\{(Q,[\mu]) \mid \text{$Q$ is a vertex in $\mathcal B_1$ with $X_{\lambda}^S(1)(\bar k)\cap G_{[\mu]}^{\mathcal B_1}(Q)\neq \emptyset$}\}.$$
Then for any $[\mu]=[e_1,e_2,e_3]\in M_{\lambda}(1)$, a tuple $(Q,[\mu])$ is contained in $\mathcal P_{\lambda}(1)$ if and only if $Q$ is a vertex of type $-(e_1+e_2+e_3)\in \mathbb Z/3$  in $\mathcal B_1$.
Indeed, the action of $\mathrm{SL}_3(F)$ on all vertices with the same type in $\mathcal B_1$ is transitive.

\begin{prop}
\label{invdecomposition}
For any $\lambda\in X_*(T^S)_+$ such that $X_{\lambda}^S(1)$ is non-empty, we have a decomposition 
$$X_{\lambda}^S(1)(\bar k)=\bigcup_{(Q,[\mu])\in\mathcal P_{\lambda}(1)}(X_{\lambda}^S(1)(\bar k)\cap G_{[\mu]}^{\mathcal B_1}(Q)).$$
\end{prop}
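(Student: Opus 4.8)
The plan is to show that the sets $X_{\lambda}^S(1)(\bar k)\cap G_{[\mu]}^{\mathcal B_1}(Q)$, as $(Q,[\mu])$ ranges over $\mathcal P_{\lambda}(1)$, cover $X_{\lambda}^S(1)(\bar k)$. The key point is that every vertex $P$ of $\mathcal B_{\infty}$ lies in $G_{[\mu]}^{\mathcal B_1}(Q)$ for a suitable pair $(Q,[\mu])$, and that when $P\in X_{\lambda}^S(1)(\bar k)$ this pair automatically satisfies the defining condition of $\mathcal P_{\lambda}(1)$.

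First I would fix $P\in X_{\lambda}^S(1)(\bar k)$, so $P$ is a vertex of type $0$ in $\mathcal B_{\infty}$ with $\mathrm{inv}'(P,\sigma P)=[\lambda]$. I want to produce a vertex $Q\in\mathcal B_1$ and a class $[\mu]\in X_*(T)_+'$ with $P\in G_{[\mu]}^{\mathcal B_1}(Q)$. The natural candidate is the vertex $Q\in\mathcal B_1$ closest to $P$: among all $\sigma$-fixed vertices, choose $Q$ minimizing (a representative of the size of) $\mathrm{inv}'(Q,P)$, and set $[\mu]=\mathrm{inv}'(Q,P)$. Then take a special gallery $(C_0,\dots,C_r)$ connecting $Q$ to $P$. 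I would argue that by the minimality of $Q$ this gallery can be chosen so that no vertex other than $Q$ lies in $\mathcal B_1$: if some intermediate vertex $Q'$ of the gallery were $\sigma$-fixed, then $Q'$ would be a vertex of $\mathcal B_1$ strictly closer to $P$ than $Q$ (since it lies on a minimal gallery from $Q$ to $P$), contradicting the choice of $Q$. Hence $P\in G_{[\mu]}^{\mathcal B_1}(Q)$ by definition. (In the degenerate case $P\in\mathcal B_1$, take $Q=P$, $[\mu]=[0,0,0]$; this is automatically in $G_{[\mu]}^{\mathcal B_1}(Q)$.)

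Next I would check $(Q,[\mu])\in\mathcal P_{\lambda}(1)$. But this is immediate: we have just exhibited $P\in X_{\lambda}^S(1)(\bar k)\cap G_{[\mu]}^{\mathcal B_1}(Q)$, so that intersection is non-empty, which is exactly the defining condition. It then follows, by the remark preceding the proposition, that $Q$ has type $-(e_1+e_2+e_3)$ in $\mathcal B_1$ and that $[\mu]\in M_{\lambda}(1)$ — though for the decomposition statement we need only the non-emptiness. The reverse inclusion $\bigcup(X_{\lambda}^S(1)(\bar k)\cap G_{[\mu]}^{\mathcal B_1}(Q))\subseteq X_{\lambda}^S(1)(\bar k)$ is trivial. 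Therefore equality holds.

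The main obstacle is the existence and selection of $Q$: one must make precise ``the $\mathcal B_1$-vertex closest to $P$'' and verify that a special gallery realizing this distance meets $\mathcal B_1$ only at $Q$. I expect this to follow from standard facts about retractions and convexity in affine buildings (combined with the structure of special galleries described in Section \ref{building}), together with the $\sigma$-equivariance: if $\gamma$ is a minimal gallery from $Q$ to $P$ with an interior $\sigma$-fixed vertex $Q'$, the subgallery from $Q'$ to $P$ has strictly smaller length and $Q'\in\mathcal B_1$, so $Q$ was not closest. One subtlety is ensuring that $\mathrm{inv}'(Q,P)$, rather than merely gallery distance, is what governs membership in the various $G_{[\mu]}^{\mathcal B_1}(Q)$; here I would use that for vertices in a thick building the relative position and the existence of a special gallery are linked as recalled before Proposition \ref{invformula}, so that once $P\in\mathcal{G}r_{[\mu]}(Q)$ with a gallery avoiding $\mathcal B_1$, membership in $G_{[\mu]}^{\mathcal B_1}(Q)$ is automatic.
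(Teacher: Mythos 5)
Your proposal is correct and takes essentially the same approach as the paper: pick the $\mathcal B_1$-vertex $Q$ closest to $P$, set $[\mu]=\mathrm{inv}'(Q,P)$, and observe that minimality forces a gallery from $Q$ to $P$ to avoid $\mathcal B_1$, so $P\in G_{[\mu]}^{\mathcal B_1}(Q)$. The paper's proof is a terser version of exactly this argument.
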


\begin{proof}
For any $P\in X_{\lambda}^S(1)(\bar k)$, we have a minimal gallery to $\mathcal B_1$.
Let $Q$ be a vertex in $\mathcal B_1$ such that the distance between $P$ and $Q$ is minimal.
Then $P$ is contained in $X_{\lambda}^S(1)(\bar k)\cap G_{[\mu]}^{\mathcal B_1}(Q)$, where $[\mu]=\mathrm{inv}'(Q,P)$.
So we obtain the decomposition in the proposition.
\end{proof}

\subsection{The Superbasic Case}
\label{superbasic}
We keep the notation above.
Set $$
b_1=\begin{pmatrix}
0 & 0 & t \\
1 & 0 & 0 \\
0 & 1 & 0 \\
\end{pmatrix},\quad
b_2=\begin{pmatrix}
0 & t & 0 \\
0 & 0 & t \\
1 & 0 & 0 \\
\end{pmatrix}.
$$
Then the newton vector of $b_1$ (resp.\ $b_2$) is $(\frac{1}{3},\frac{1}{3},\frac{1}{3})$ (resp.\ $(\frac{2}{3},\frac{2}{3},\frac{2}{3})$).

Let $C_M$ be the main chamber consisting of three vertices $[\mathcal O\oplus \mathcal O\oplus \mathcal O], [t\mathcal O\oplus \mathcal O\oplus \mathcal O], [t\mathcal O\oplus t\mathcal O\oplus \mathcal O]$, and let $Q$ be a vertex in $C_M$.
Let $\lambda=(m_1,m_2,m_3)\in X_*(T)_+$ with $X_{\lambda}(b_i)\neq \emptyset$ $(i=1,2)$ (for the explicit criterion, see Remark \ref{nonemptiness}).
Set
\begin{equation*}
M_{\lambda}(b_i)=
\left \{[\mu]\in X_*(T)_+' \left|
\begin{array}{l}
\text{$(X_{\lambda}(b_i)(\bar k)\cap \eta^{-1}(0))\cap G_{[\mu]}^{C_M}(Q)\neq \emptyset$}\\
\text{for some vertex $Q$ in $C_M$}
\end{array}
\right.\right\}
\end{equation*}
where
\begin{equation*}
G_{[\mu]}^{C_M}(Q)=
\left \{P\in \mathcal{G}r_{[\mu]}(Q) \left|
\begin{array}{l}
\text{there exists a minimal gallery from $Q$ to $P$}\\
\text{containing no vertices in $C_M$ except $Q$}
\end{array}
\right.\right\}.
\end{equation*}
 
\begin{lemm}
\label{invfinite2}
Let notation be as above.
We have
\[
  M_{\lambda}(b_1)=
  \begin{cases}
    \mathbb \{[e_1,e_2,e_3]\in X_*(T)_+'\mid e_1-e_3=m_1-1\} & (m_2=0) \\
    \mathbb \{[e_1,e_2,e_3]\in X_*(T)_+'\mid e_1-e_3=m_1-1, m_{\mu,\mathrm{I}}\geq -m_2\} & (m_2<0) \\
    \mathbb \{[e_1,e_2,e_3]\in X_*(T)_+'\mid e_3-e_1=m_3,\\
    \hspace{3.6cm}\max \{m_{\mu,\mathrm{I\hspace{-.1em}I}},m_{\mu,\mathrm{I\hspace{-.1em}I\hspace{-.1em}I}}\}\geq m_2\} & (m_2>0), \\
  \end{cases}
\]
\[
  M_{\lambda}(b_2)=
  \begin{cases}
    \mathbb \{[e_1,e_2,e_3]\in X_*(T)_+'\mid e_1-e_3=m_1-1\} & (m_2=1) \\
    \mathbb \{[e_1,e_2,e_3]\in X_*(T)_+'\mid e_1-e_3=m_1-1,\\
    \hspace{3.6cm} \max \{m_{\mu,\mathrm{I\hspace{-.1em}I}},m_{\mu,\mathrm{I\hspace{-.1em}I\hspace{-.1em}I}}\}\geq -m_2+1\} & (m_2<1) \\
    \mathbb \{[e_1,e_2,e_3]\in X_*(T)_+'\mid e_3-e_1=m_3, m_{\mu,\mathrm{I}}\geq m_2-1\} & (m_2>1), \\
  \end{cases}
\]
where $m_{\mu,\mathrm{I}}=\min\{e_1-e_2,e_2-e_3\}, m_{\mu,\mathrm{I\hspace{-.1em}I}}=\min\{e_1-e_2+1,e_2-e_3\}, m_{\mu,\mathrm{I\hspace{-.1em}I\hspace{-.1em}I}}=\min\{e_1-e_2,e_2-e_3+1\}$.
In particular, $M_{\lambda}(b_1)$ and $M_{\lambda}(b_2)$ are finite sets.
\end{lemm}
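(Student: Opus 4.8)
The plan is to run the argument of Lemma~\ref{invfinite} almost verbatim, with the subcomplex $\mathcal B_1$ replaced by the main chamber $C_M$ and the Frobenius $\sigma$ replaced by the twisted Frobenius $b_i\sigma$. The first point to record is the structural fact that $b_i\sigma$ is a simplicial automorphism of $\mathcal B_\infty$ which stabilizes $C_M$ and permutes its three vertices cyclically --- this is immediate from the matrices $b_1,b_2$ (or from $\nu_{b_i}=(\tfrac i3,\tfrac i3,\tfrac i3)$ being central) --- and that in particular $b_i\sigma$ fixes no vertex, which is exactly why $C_M$, rather than a single vertex, takes over the role played by $\mathcal B_1$ in Subsection~\ref{b=1}. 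Consequently, if $P\in G_{[\mu]}^{C_M}(Q)$ and $(C_0,\dots,C_r)$ is a special gallery connecting $Q$ to $P$, then $(b_i\sigma C_0,\dots,b_i\sigma C_r)$ is a special gallery connecting the vertex $Q':=b_i\sigma Q$ of $C_M$ to $b_i\sigma P$, and $\mathrm{inv}'(Q',b_i\sigma P)=\mathrm{inv}'(Q,P)=[\mu]$ since $\sigma$, hence $b_i\sigma$, acts trivially on $X_*(T)$.

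To compute $\mathrm{inv}'(P,b_i\sigma P)$ I would apply Proposition~\ref{invformula} to the triple $P_0$, $P_1=P$, $P_2=b_i\sigma P$, taking $P_0$ to be a suitable vertex of $C_M$ and rerouting the two special galleries so that they emanate from this common $P_0$. Since $Q$ and $Q'$ are two distinct vertices of the single chamber $C_M$, they differ by one step ($\mathrm{inv}'(Q,Q')=[1,0,0]$), so this rerouting displaces exactly one coordinate of the relevant relative positions; this displacement is the source of the ``$e_1-e_3=m_1-1$'' (as opposed to the unshifted ``$e_3-e_1=m_3$'') in the statement, and of the $+1$ in the definitions of $m_{\mu,\mathrm{I\hspace{-.1em}I}}$ and $m_{\mu,\mathrm{I\hspace{-.1em}I\hspace{-.1em}I}}$ relative to $m_{\mu,\mathrm{I}}$. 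The first chambers of the rerouted galleries have relative position I, I\hspace{-.1em}I or I\hspace{-.1em}I\hspace{-.1em}I, and which of these can occur is governed by which vertex of $C_M$ the base point is and by $b_i$ --- this is the source of the asymmetry between the formulas for $M_\lambda(b_1)$ and $M_\lambda(b_2)$. Running through these cases with Proposition~\ref{invformula} --- distinguishing whether $cl(Q,P)$ is a line segment or a parallelogram, and tracking whether and where a bend occurs --- shows that $\lambda$ takes one of a short list of shapes (the coordinate-shifted analogues of (i)--(iii) in the proof of Lemma~\ref{invfinite}), and reading off the resulting constraints on $[\mu]=[e_1,e_2,e_3]$ yields the three displayed cases; the quantity $\max\{m_{\mu,\mathrm{I\hspace{-.1em}I}},m_{\mu,\mathrm{I\hspace{-.1em}I\hspace{-.1em}I}}\}$ reflects that, for the ``wrong'' sign of $m_2$, the prescribed $\lambda$ may be realised through either a relative-position-I\hspace{-.1em}I or a relative-position-I\hspace{-.1em}I\hspace{-.1em}I configuration, whichever admits the larger value of $j$.

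For the reverse inclusion I would, given $[\mu]$ satisfying the stated inequalities, construct $P\in G_{[\mu]}^{C_M}(Q)$ realising the prescribed $\lambda$ exactly as in the converse half of the proof of Lemma~\ref{invfinite}: build the two special galleries explicitly inside a single apartment and, when $m_2\neq 0$ (resp.\ $m_2\neq 1$), invoke thickness of the Bruhat--Tits building of $\mathrm{SL}_3(L)$ to choose the chambers $b_i\sigma C_\bullet$ so that the bend occurs at the prescribed edge; the inequality on the relevant $m_{\mu,?}$ is precisely the condition making this possible. Finiteness is then immediate, since in each case $\lambda$ pins down $e_1-e_3$ (or $e_3-e_1$) and we may normalise the representative of $[\mu]$ by $e_2=0$, leaving only finitely many pairs $(e_1,e_3)$. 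I expect the genuine work to be the middle step: carefully bookkeeping how the base-vertex displacement $Q\mapsto b_i\sigma Q$ inside $C_M$ interacts with the formulas of Proposition~\ref{invformula}, and exhaustively but cleanly enumerating which of the relative positions I, I\hspace{-.1em}I, I\hspace{-.1em}I\hspace{-.1em}I is attainable for each vertex of $C_M$ and each $b_i$. This is the superbasic, Frobenius-twisted analogue of the computation in \cite[Section~2]{Kottwitz}, and everything else is routine once it is in place.
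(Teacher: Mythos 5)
Your proposal is correct and follows essentially the same route as the paper's proof: identify $C_M$ as the $b_i\sigma$-stable analogue of $\mathcal B_1$, push a special gallery for $P$ to one for $b_i\sigma P$, reroute so both emanate from a common vertex $P_0$ of $C_M$, apply Proposition~\ref{invformula} with a case split on relative position (noting that the one-step displacement $Q\mapsto b_iQ$ produces the $\pm 1$ shifts and the $+1$ in $m_{\mu,\mathrm{I\hspace{-.1em}I}},m_{\mu,\mathrm{I\hspace{-.1em}I\hspace{-.1em}I}}$), and use thickness for the converse. The one point you elide but the paper spells out is the relative-position-I case, where the rerouted gallery from $P_0=Q$ to $b_1\sigma P$ does not start at $C_M$ but at the chamber $b_1\sigma D$ (with $D$ the unique chamber in $cl(C_M,C_0)$ containing $Q$ and $b_2Q$), and one must then further distinguish whether $C_0$ and $b_1\sigma D$ have relative position I or I\hspace{-.1em}I; this is the extra subcase that produces both the $m_2=0$ and $m_2<0$ branches.
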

\begin{proof}
Set $[\mu]=[e_1,e_2,e_3]\in X_*(T)_+'$.
Let $P\in G_{[\mu]}^{C_M}(Q)$ and let $(C_0, \ldots, C_r)$ be a special gallery connecting $Q$ to $P$.
Let us first consider the case for $b_1$.
Given the relative position of $C_M$ and $C_0$, we can compute $\mathrm{inv}'(b_1 Q, P),\mathrm{inv}'(Q, b_1\sigma P)$ and $\mathrm{inv}'(P, b_1\sigma P)$ using Proposition \ref{invformula}.

If $C_M$ and $C_0$ have relative position I\hspace{-.1em}I\hspace{-.1em}I, then $\mathrm{inv}'(b_1 Q, P)=[e_1,e_2,e_3-1]$.
In this case, we can take a special gallery from $b_1 Q$ to $P$ whose first chamber is $C_M$.
To check this, note that one can take an apartment containing $C_M$ and $cl(Q,P)$ (e.g., an apartment containing $C_M$ and $P$).
Since $C_M$ and $C_0$ have relative position I\hspace{-.1em}I\hspace{-.1em}I, $C_M=b_1\sigma C_M$ and $b_1\sigma C_0$ also have relative position I\hspace{-.1em}I\hspace{-.1em}I.
So, by connecting $(C_M,\ldots, C_0,\ldots, C_r)$ and $(b_1\sigma C_0, \ldots, b_1\sigma C_r)$, we have $$\mathrm{inv}'(P,b_1\sigma P)=[e_1-e_3+1-j,j,e_3-e_1],\quad \min\{1, m_{\mu,\mathrm{I\hspace{-.1em}I\hspace{-.1em}I}}\}\le j\le m_{\mu,\mathrm{I\hspace{-.1em}I\hspace{-.1em}I}},$$
where $m_{\mu,\mathrm{I\hspace{-.1em}I\hspace{-.1em}I}}=\min\{e_1-e_2,e_2-e_3+1\}$.

If $C_M$ and $C_0$ have relative position I\hspace{-.1em}I, then $\mathrm{inv}'(Q, b_1\sigma P)=[e_1+1,e_2,e_3]$ because $C_M$ and $b_1\sigma C_0$ also have relative position I\hspace{-.1em}I.
In this case, we can take a special gallery from $Q$ to $b_1\sigma P$ whose first chamber is $C_M$.
Since $C_M$ and $C_0$ have relative position I\hspace{-.1em}I, $C_0$ and $C_M$ have relative position I\hspace{-.1em}I\hspace{-.1em}I.
So, by connecting $(C_0, \ldots, C_r)$ and $(C_M,\ldots, b_1\sigma C_0,\ldots, b_1\sigma C_r)$, we have $$\mathrm{inv}'(P,b_1\sigma P)=[e_1-e_3+1-j,j,e_3-e_1],\quad \min\{1, m_{\mu,\mathrm{I\hspace{-.1em}I}}\}\le j\le m_{\mu,\mathrm{I\hspace{-.1em}I}},$$
where $m_{\mu,\mathrm{I\hspace{-.1em}I}}=\min\{e_1-e_2+1,e_2-e_3\}$.

If $C_M$ and $C_0$ have relative position I, then $\mathrm{inv}'(Q, b_1\sigma P)=[e_1+1,e_2,e_3]$ because $C_M$ and $b_1\sigma C_0$ also have relative position I.
Let $D\neq C_M$ be the unique chamber in $cl(C_M, C_0)$ containing $Q$ and $b_2 Q$.
Then we can take a special gallery from $Q$ to $b_1\sigma P$ whose first chamber is $b_1\sigma D$.
\begin{center}
\begin{tikzpicture}
 \draw (-2,0)--(-1,1.732)--(1,1.732)--(2,0)--(1,-1.732)--(-1,-1.732)--cycle;
 \draw (0,0)--(-2,0)--cycle;
 \draw (0,0)--(-1,1.732)--cycle;
 \draw (0,0)--(1,1.732)--cycle;
 \draw (0,0)--(2,0)--cycle;
 \draw (0,0)--(1,-1.732)--cycle;
 \draw (0,0)--(-1,-1.732)--cycle;
 \draw (-1,1.732)node[left]{$b_1 Q$};
 \draw (0.1,0.05)node[below right]{$Q$};
 \draw (0.05,1)node{$C_M$};
 \draw (1,0.65)node{$D$};
 \draw (0,-1)node{$C_0$};
\end{tikzpicture}
\hspace{1cm}
\begin{tikzpicture}
 \draw (-2,0)--(-1,1.732)--(1,1.732)--(2,0)--(1,-1.732)--(-1,-1.732)--cycle;
 \draw (0,0)--(-2,0)--cycle;
 \draw (0,0)--(-1,1.732)--cycle;
 \draw (0,0)--(1,1.732)--cycle;
 \draw (0,0)--(2,0)--cycle;
 \draw (0,0)--(1,-1.732)--cycle;
 \draw (0,0)--(-1,-1.732)--cycle;
 \draw (1,1.732)node[right]{$Q$};
 \draw (0.1,0.05)node[below right]{$b_1 Q$};
 \draw (0.05,1)node{$C_M$};
 \draw (1,0.65)node{$b_1\sigma D$};
 \draw (0,-1)node{$b_1\sigma C_0$};
\end{tikzpicture}
\end{center}
Using Lemma \ref{LEMMA1.1} and Lemma \ref{LEMMA1.2}, one can show that $C_0$ and $b_1\sigma D$ have relative position I or I\hspace{-.1em}I (and both of the two cases actually occur).
If $C_0$ and $b_1\sigma D$ have relative position I, then by connecting $(C_0, \ldots, C_r)$ and $(b_1\sigma D, \ldots, b_1\sigma C_0, \ldots, b_1\sigma C_r)$, we have
$$\mathrm{inv}'(P,b_1\sigma P)=[e_1-e_3+1,0,e_3-e_1].$$
If $C_0$ and $b_1\sigma D$ have relative position I\hspace{-.1em}I, then by connecting $(C_0, \ldots, C_r)$ and $(b_1\sigma D, \ldots, b_1\sigma C_0, \ldots, b_1\sigma C_r)$, we have
$$\mathrm{inv}'(P,b_1\sigma P)=[e_1-e_3+1,-j,e_3-e_1+j],\quad \min\{1,m_{\mu,\mathrm{I}}\}\le j\le m_{\mu,\mathrm{I}},$$
where $m_{\mu,\mathrm{I}}=\min\{e_1-e_2,e_2-e_3\}$.

Next, we will consider the case for $b_2$.
In the same way as the case for $b_1$, we can compute $\mathrm{inv}'(P, b_2\sigma P)$.
We will state only the results but will not give details of the proofs.
If $C_M$ and $C_0$ have relative position I\hspace{-.1em}I\hspace{-.1em}I, then by connecting $(C_0, \ldots, C_r)$ and $(C_M,\ldots, b_2\sigma C_0,\ldots, b_2\sigma C_r)$, we have
$$\mathrm{inv}'(P,b_2\sigma P)=[e_1-e_3+1,1-j,e_3-e_1+j],\quad \min\{1, m_{\mu,\mathrm{I\hspace{-.1em}I\hspace{-.1em}I}}\}\le j\le m_{\mu,\mathrm{I\hspace{-.1em}I\hspace{-.1em}I}}.$$
If $C_M$ and $C_0$ have relative position I\hspace{-.1em}I, then by connecting $(C_M,\ldots, C_0,\ldots, C_r)$ and $(b_2\sigma C_0, \ldots, b_2\sigma C_r)$, we have
$$\mathrm{inv}'(P,b_2\sigma P)=[e_1-e_3+1,1-j,e_3-e_1+j],\quad \min\{1, m_{\mu,\mathrm{I\hspace{-.1em}I}}\}\le j\le m_{\mu,\mathrm{I\hspace{-.1em}I}}.$$
If $C_M$ and $C_0$ have relative position I, then let $D'\neq C_M$ be the unique chamber in $cl(C_M, C_0)$ containing $Q$ and $b_1 Q$.
We can always take a special gallery from $Q$ to $b_2\sigma P$ whose first chamber is $b_2\sigma D'$.
Moreover, $C_0$ and $b_2\sigma D'$ have relative position I or I\hspace{-.1em}I\hspace{-.1em}I.
So if $C_0$ and $b_2\sigma D'$ have relative position I, then by connecting $(C_0, \ldots, C_r)$ and $(b_2\sigma D', \ldots, b_2\sigma C_0, \ldots, b_2\sigma C_r)$, we have
$$\mathrm{inv}'(P,b_2\sigma P)=[e_1-e_3+1,1,e_3-e_1],$$
and if $C_0$ and $b_2\sigma D'$ have relative position I\hspace{-.1em}I\hspace{-.1em}I, then by connecting $(C_0, \ldots, C_r)$ and $(b_2\sigma D', \ldots, b_2\sigma C_0, \ldots, b_2\sigma C_r)$, we have
$$\mathrm{inv}'(P,b_2\sigma P)=[e_1-e_3+1-j,1+j,e_3-e_1],\quad \min\{1,m_{\mu,\mathrm{I}}\}\le j\le m_{\mu,\mathrm{I}}.$$

Finally, the result follows from these formulas (compare the proof of Lemma \ref{invfinite}).
\end{proof}

\begin{lemm}
\label{invclosed2}
Let $Q$ be a vertex in $C_M$.
If $(X_{\lambda}(b_i)(\bar k)\cap \eta^{-1}(0))\cap G_{[\mu]}^{C_M}(Q)$ $(i=1,2)$ is non-empty, then we have 
\begin{align*}
(X_{\lambda}(b_i)(\bar k)\cap \eta^{-1}(0))\cap G_{[\mu]}^{C_M}(Q)&=(X_{\lambda}(b_i)(\bar k) \cap\eta^{-1}(0))\cap \mathcal{G}r_{[\mu]}(Q) \\
&=(X_{\lambda}(b_i)(\bar k)\cap \eta^{-1}(0))\cap \overline{\mathcal{G}r_{[\mu]}(Q)},
\end{align*}
where $\overline{\mathcal{G}r_{[\mu]}(Q)}$ denotes the closure of $\mathcal{G}r_{[\mu]}(Q)$ in $X^S$.
In particular, $(X_{\lambda}(b_i)(\bar k)\cap \eta^{-1}(0))\cap G_{[\mu]}^{C_M}(Q)$ is closed in $X_{\lambda}(b_i)(\bar k)$.
\end{lemm}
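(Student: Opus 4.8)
The plan is to imitate the proof of Lemma \ref{invclosed} line for line, with the single chamber $C_M$ playing the role of the subcomplex $\mathcal B_1$ and Lemma \ref{invfinite2} playing the role of Lemma \ref{invfinite}. Since $G_{[\mu]}^{C_M}(Q)\subseteq \mathcal{G}r_{[\mu]}(Q)\subseteq \overline{\mathcal{G}r_{[\mu]}(Q)}$, it suffices to prove the outer equality
$$(X_{\lambda}(b_i)(\bar k)\cap \eta^{-1}(0))\cap G_{[\mu]}^{C_M}(Q)=(X_{\lambda}(b_i)(\bar k)\cap \eta^{-1}(0))\cap \overline{\mathcal{G}r_{[\mu]}(Q)},$$
and only the inclusion ``$\supseteq$'' needs an argument. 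First I would use the standard identity $\overline{\mathcal{G}r_{[\mu]}(Q)}=\bigcup_{[\mu']\preceq[\mu]}\mathcal{G}r_{[\mu']}(Q)$ for closures of Schubert cells in $X^S$, so that a point $P$ of the right-hand side has $\mathrm{inv}'(Q,P)=[\mu']=[e_1',e_2',e_3']$ for some $[\mu']\preceq[\mu]=[e_1,e_2,e_3]$, whence $e_1'-e_3'\le e_1-e_3$.

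Now, for such a $P\in X_{\lambda}(b_i)(\bar k)\cap\eta^{-1}(0)$, I would produce -- by the same reasoning as in Proposition \ref{invdecomposition} -- a vertex $Q'$ of $C_M$ together with a minimal gallery from $Q'$ to $P$ meeting $C_M$ only in $Q'$, so that $P\in G_{[\nu]}^{C_M}(Q')$ with $[\nu]=[e_1'',e_2'',e_3'']=\mathrm{inv}'(Q',P)$; moving the base vertex within $cl(Q,P)$ cannot increase the value $e_1-e_3$, so $e_1''-e_3''\le e_1'-e_3'$. Since $(X_{\lambda}(b_i)(\bar k)\cap\eta^{-1}(0))\cap G_{[\nu]}^{C_M}(Q')\ni P$ is non-empty we get $[\nu]\in M_{\lambda}(b_i)$, and the hypothesis of the lemma gives $[\mu]\in M_{\lambda}(b_i)$. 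The crucial input is Lemma \ref{invfinite2}: for a fixed $\lambda$, \emph{every} class in $M_{\lambda}(b_i)$ has one and the same value of $e_1-e_3$ -- namely $m_1-1$ or $-m_3$ according to the sign condition on $m_2$ -- uniformly over the relative-position cases I, I\hspace{-.1em}I and I\hspace{-.1em}I\hspace{-.1em}I and irrespective of whether a bend occurs. Hence $e_1''-e_3''=e_1-e_3$, so the chain $e_1''-e_3''\le e_1'-e_3'\le e_1-e_3$ collapses to equalities; together with $\mu'\preceq\mu$ this forces $[\mu']=[\mu]$, i.e.\ $P\in\mathcal{G}r_{[\mu]}(Q)$, and -- as in the proof of Lemma \ref{invclosed} -- it also forces $Q'=Q$, so that $P\in G_{[\mu]}^{C_M}(Q)$. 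This gives ``$\supseteq$''; the reverse inclusion is trivial.

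The step I expect to require the most care is the building combinatorics underlying the two assertions borrowed from Section \ref{building}: the ``visibility'' reduction to a vertex of $C_M$ (with $e_1-e_3$ not increasing) and the final step $Q'=Q$. These are of exactly the same nature as the corresponding steps in Proposition \ref{invdecomposition} and Lemma \ref{invclosed}, and rest on the facts about retractions, special galleries, $cl(\cdot,\cdot)$, and bends recalled there (Lemma \ref{LEMMA1.1}, Lemma \ref{LEMMA1.2}); I would simply transport those arguments. Finally, the last assertion is formal: $\overline{\mathcal{G}r_{[\mu]}(Q)}$ is closed in $X^S$, and by Proposition \ref{ADLVdecomposition} the set $X_{\lambda}(b_i)(\bar k)\cap\eta^{-1}(0)$ is an open and closed piece of $X_{\lambda}(b_i)$, so its intersection with the closed subset $\overline{\mathcal{G}r_{[\mu]}(Q)}$ is closed in $X_{\lambda}(b_i)(\bar k)$.
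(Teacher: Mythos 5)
Your proposal is correct and takes essentially the same approach as the paper, whose own proof is the one-liner ``This lemma follows from the computation in Lemma \ref{invfinite2} (see the proof of Lemma \ref{invclosed})'' --- precisely the transport you describe, with $C_M$ in place of $\mathcal B_1$ and Lemma \ref{invfinite2} supplying the constancy of $e_1-e_3$. The only point I would tighten is the justification of $e_1''-e_3''\le e_1'-e_3'$: it follows from $Q'$ being a closest vertex of $C_M$ to $P$ (the vertex produced in Proposition \ref{invdecomposition2}), which need not lie in $cl(Q,P)$, but this changes only the phrasing and not the conclusion or the rest of your chain of (in)equalities.
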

\begin{proof}
This lemma follows from the computation in Lemma \ref{invfinite2} (see the proof of Lemma \ref{invclosed}).
\end{proof}

For any $\lambda=(m_1,m_2,m_3)\in X_*(T)_+$ with $X_{\lambda}(b_i)\neq \emptyset$ $(i=1,2)$, set
\begin{equation*}
\mathcal P_{\lambda}(b_i)=
\left \{(Q,[\mu]) \left|
\begin{array}{l}
\text{$Q$ is a vertex in $C_M$ with}\\
\text{$(X_{\lambda}(b_i)(\bar k)\cap \eta^{-1}(0))\cap G_{[\mu]}^{C_M}(Q)\neq \emptyset$}\\
\end{array}
\right.\right\}.
\end{equation*}
Then for any $[\mu]=[e_1,e_2,e_3]\in M_{\lambda}(b_i)$, a tuple $(Q,[\mu])$ is contained in $\mathcal P_{\lambda}(b_i)$ if and only if $Q$ is a vertex of type $-(e_1+e_2+e_3)\in \mathbb Z/3$ in $C_M$.
Indeed, $C_M$ has the only one vertex of type $i\in \mathbb Z/3$.

\begin{prop}
\label{invdecomposition2}
For any $\lambda=(m_1,m_2,m_3)\in X_*(T)_+$ with $X_{\lambda}(b_i)\neq \emptyset$ $(i=1,2)$, we have
$$X_{\lambda}(b_i)(\bar k)\cap \eta^{-1}(0)=\bigcup_{(Q,[\mu])\in \mathcal P_{\lambda}(b_i)}((X_{\lambda}(b_i)(\bar k)\cap \eta^{-1}(0))\cap G_{[\mu]}^{C_M}(Q)).$$
\end{prop}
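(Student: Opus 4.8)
The nontrivial inclusion is ``$\subseteq$''; the reverse one is immediate from the definition of $\mathcal P_{\lambda}(b_i)$. As in Section \ref{ADLVset}, I regard an element of $X_{\lambda}(b_i)(\bar k)\cap\eta^{-1}(0)$ as a vertex $P$ of type $0$ in $\mathcal B_{\infty}$, and the argument will run parallel to that of Proposition \ref{invdecomposition}, with the single chamber $C_M$ playing the role of the subcomplex $\mathcal B_1$.

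Given such a $P$, the plan is to produce a vertex $Q$ of $C_M$ together with a minimal gallery from $Q$ to $P$ none of whose chambers contains a vertex of $C_M$ other than $Q$; setting $[\mu]=\mathrm{inv}'(Q,P)$, this is exactly the assertion that $P\in G_{[\mu]}^{C_M}(Q)$. Since $P$ then witnesses $(X_{\lambda}(b_i)(\bar k)\cap\eta^{-1}(0))\cap G_{[\mu]}^{C_M}(Q)\neq\emptyset$, the pair $(Q,[\mu])$ lies in $\mathcal P_{\lambda}(b_i)$ — here the type of $Q$ is forced to be $-(e_1+e_2+e_3)\bmod 3$ by $\mathrm{inv}'(Q,P)=[e_1,e_2,e_3]$ together with $\mathrm{type}(P)=0$, which is consistent with the description of $\mathcal P_{\lambda}(b_i)$ recalled just before the proposition — and letting $P$ vary through $X_{\lambda}(b_i)(\bar k)\cap\eta^{-1}(0)$ yields the asserted union. (If $P$ happens to be a vertex of $C_M$, which forces $P$ to be its unique type-$0$ vertex, take $Q=P$ and the length-zero gallery given by any chamber through $Q$ distinct from $C_M$, so that $[\mu]=[0,0,0]$.)

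It remains to construct $Q$ and the gallery, and this is where the real work lies. I would take $Q$ to be a vertex of $C_M$ at minimal gallery distance from $P$, exactly as in Proposition \ref{invdecomposition}. First one reduces to controlling the first chamber: if a distance-realizing minimal gallery $(C_0,\dots,C_r)$ from $Q$ to $P$ had a chamber $C_j$ containing a vertex $Q'\neq Q$ of $C_M$, then $(C_j,\dots,C_r)$ would connect $Q'$ to $P$, so $d(Q',P)\leq r-j$, while $d(Q',P)\geq d(Q,P)=r$ by the choice of $Q$; hence $j=0$, and it is enough to arrange that $C_0$ meets $C_M$ only in $Q$. Applying the retraction $\rho=\rho_{\mathcal A_M,C_M}$ onto the main apartment turns this into a two-dimensional statement: by Lemma \ref{LEMMA1.1} one has $\mathrm{inv}'(Q,P)=\mathrm{inv}'(Q,\rho(P))$ and minimal galleries transport under $\rho$, so it suffices to find, for the vertex $\rho(P)$ of $\mathcal A_M$ lying outside $C_M$, a closest vertex $Q$ of $C_M$ for which the convex hull $cl(Q,\rho(P))$ — a line segment or a parallelogram, according to the description recalled in Section \ref{building} — meets $C_M$ only in $Q$, and then to lift the corresponding gallery back through $\rho$ using Lemma \ref{LEMMA1.2}. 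I expect the case analysis here — running over the positions of $\rho(P)$ relative to the triangle $C_M$ (beyond one of its three vertices, or beyond one of its three edges, the latter being the delicate case) — to be the main obstacle; everything else in the proof is formal.
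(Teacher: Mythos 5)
Your overall strategy coincides with the paper's: pick $Q\in C_M$ at minimal gallery distance to $P$ and show $P\in G_{[\mu]}^{C_M}(Q)$ for $[\mu]=\mathrm{inv}'(Q,P)$. The reduction you give — in a distance-realizing minimal gallery $(C_0,\dots,C_r)$ from $Q$ to $P$, no $C_j$ with $j\ge 1$ can contain a vertex of $C_M$ other than $Q$ — is correct. But you then stop, declaring that ``arrange that $C_0$ meets $C_M$ only in $Q$'' is the main remaining obstacle and sketching a retraction-plus-case-analysis that you do not carry out. This is the gap, and it is a real one in the proof as written, though the missing step is short: the very same distance argument already settles $j=0$, so the retraction and case analysis are unnecessary. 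Minimality of the gallery forces $Q\notin C_1$ (otherwise $(C_1,\dots,C_r)$ would be a shorter gallery from $Q$ to $P$), so the shared edge $C_0\cap C_1$ consists exactly of the two vertices of $C_0$ other than $Q$. Hence any $Q'\in C_0\cap C_M$ with $Q'\neq Q$ already lies in $C_1$, and then $(C_1,\dots,C_r)$ gives $d(Q',P)\le r-1<r=d(Q,P)$, contradicting the choice of $Q$. Thus $C_0\cap C_M=\{Q\}$ for \emph{every} minimal gallery of length $r\ge 1$, with nothing to arrange.

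The only case this leaves is $r=0$. You note the subcase $P=Q$; the other subcase is $P\neq Q$ adjacent to $Q$, which forces $Q$ to be the type-$1$ or type-$2$ vertex of $C_M$ (a type-$0$ vertex adjacent to $P$ and distinct from it cannot share a chamber with $P$). There, the edge $\{Q,P\}$ lies in infinitely many chambers by thickness of $\mathcal B_{\infty}$, and at most one of them has as third vertex the remaining vertex of $C_M$ of the appropriate type, so a suitable $C_0$ exists. With these two observations your proof closes, matching the paper's (very terse) argument; everything else in your proposal, including the bookkeeping on types and on membership in $\mathcal P_{\lambda}(b_i)$, is fine.
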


\begin{proof}
For any $P\in X_{\lambda}(b_i)(\bar k)\cap \eta^{-1}(0)$, we have a minimal gallery to $C_M$.
Let $Q$ be a vertex in $C_M$ such that the distance between $P$ and $Q$ is minimal.
Then $P$ is contained in $(X_{\lambda}(b_i)(\bar k)\cap \eta^{-1}(0))\cap G_{[\mu]}^{C_M}(Q)$, where $[\mu]=\mathrm{inv}'(Q,P)$.
So we obtain the decomposition in the proposition.
\end{proof}

\begin{rema}
Although the set $\mathcal P_{\lambda}(1)$ is always infinite, the sets $\mathcal P_{\lambda}(b_1)$ and $\mathcal P_{\lambda}(b_2)$ are finite.
\end{rema}

\section{Geometric Structure of the Schubert Cells}

\subsection{The Schubert Cells}
\label{theschubertcells}
The Schubert cell $Kt^{\lambda}K/K$ is locally closed in $X$ for any $\lambda \in X_*(T)_+$, so it inherits the structure of a reduced sub-ind-scheme of $X$.
We denote it by $\mathcal{G}r_{\lambda}$.

\begin{prop}
\label{Schubertcell}
Let $\lambda=(m_1,m_2,m_3),\mu=(m_1',m_2',m_3') \in X_*(T)_+$.
\begin{enumerate}[(i)]
\item The Schubert cell $\mathcal{G}r_{\lambda}$ forms an $L^{+}\mathrm{SL}_3$-orbit and is a smooth quasi-projective variety.
\item We have a canonical projection $\mathcal{G}r_{\lambda}\rightarrow \mathcal{G}r_{\mu}$ if $\lambda-\mu\in X_*(T)_+$.
\end{enumerate}
\end{prop}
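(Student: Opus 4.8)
The plan is to identify $\mathcal{G}r_{\lambda}$ with the $L^{+}\mathrm{SL}_3$-orbit of the point $t^{\lambda}K$ inside the connected component $\mathcal{L}att^{m_1+m_2+m_3}$ of $X$, and then to deduce both assertions from standard properties of group orbits together with one elementary computation with valuations. (That $\mathcal{G}r_{\lambda}$ lands in the stated component is immediate: if $\mathscr L=kt^{\lambda}\Lambda_{\bar k}$ with $k\in K$, then $\bigwedge^{3}\mathscr L=\det(k)\,t^{m_1+m_2+m_3}\bigwedge^{3}\Lambda_{\bar k}=t^{m_1+m_2+m_3}\bigwedge^{3}\Lambda_{\bar k}$ since $\det k\in\mathcal O^{\times}$.)

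For (i), I would first check that $K^{S}=\mathrm{SL}_3(\mathcal O)$ already acts transitively on $\mathcal{G}r_{\lambda}$. If $\mathscr L\in\mathcal{G}r_{\lambda}$, then by the Cartan decomposition $\mathscr L=g\,t^{\lambda}\Lambda_{\bar k}$ for some $g\in K$; since any diagonal $h\in T(\mathcal O)$ satisfies $h\,t^{\lambda}\Lambda_{\bar k}=t^{\lambda}h\Lambda_{\bar k}=t^{\lambda}\Lambda_{\bar k}$, we may replace $g$ by $gh$ with $\det h=(\det g)^{-1}$ and thereby assume $g\in K^{S}$ without changing $\mathscr L$. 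Hence any two points of $\mathcal{G}r_{\lambda}$ differ by an element of $K^{S}$, so $\mathcal{G}r_{\lambda}$ is the $L^{+}\mathrm{SL}_3$-orbit of $t^{\lambda}K$. Moreover, for a suitable $N$ one has $t^{N}\Lambda_{\bar k}\subseteq\mathscr L\subseteq t^{-N}\Lambda_{\bar k}$ for all $\mathscr L\in\mathcal{G}r_{\lambda}$, so the action factors through the smooth connected algebraic group $\mathrm{SL}_3(\mathcal O/t^{2N})$ acting on the projective scheme $\mathcal{L}att^{m_1+m_2+m_3,(N)}$. An orbit of a smooth connected algebraic group acting on a scheme locally of finite type is a smooth irreducible locally closed subscheme, and being locally closed in a projective scheme it is quasi-projective (local closedness of $\mathcal{G}r_{\lambda}$ in $X$ having been recalled above).

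For (ii), I would use the homogeneous-space presentation $\mathcal{G}r_{\lambda}\cong L^{+}\mathrm{SL}_3/S_{\lambda}$, where $S_{\lambda}=L^{+}\mathrm{SL}_3\cap t^{\lambda}Kt^{-\lambda}$ is the stabilizer of $t^{\lambda}K$; concretely, $g\in S_{\lambda}$ if and only if $g\in\mathrm{SL}_3(\mathcal O)$ and $v_L(g_{ij})\ge m_i-m_j$ for all $i,j$. The key point is that $\lambda-\mu\in X_*(T)_+$ forces $S_{\lambda}\subseteq S_{\mu}$: since $\lambda$ and $\mu$ are dominant the only nontrivial constraints come from indices $i<j$, and there $v_L(g_{ij})\ge m_i-m_j$ implies $v_L(g_{ij})\ge m_i'-m_j'$ for every $g$ exactly when $m_i-m_j\ge m_i'-m_j'$ for all $i<j$, which is equivalent to $m_1-m_1'\ge m_2-m_2'\ge m_3-m_3'$, i.e.\ to $\lambda-\mu$ being dominant. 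Given this inclusion, the natural $L^{+}\mathrm{SL}_3$-equivariant map $L^{+}\mathrm{SL}_3/S_{\lambda}\to L^{+}\mathrm{SL}_3/S_{\mu}$ is a morphism, and composing it with $L^{+}\mathrm{SL}_3/S_{\mu}\cong\mathcal{G}r_{\mu}$ gives the desired canonical projection $\mathcal{G}r_{\lambda}\to\mathcal{G}r_{\mu}$; in lattice terms it sends $g\,t^{\lambda}\Lambda_{\bar k}$ (with $g\in K^{S}$) to $g\,t^{\mu}\Lambda_{\bar k}$, which is well defined precisely because $g^{-1}g'\in S_{\lambda}\subseteq S_{\mu}$ whenever $g\,t^{\lambda}\Lambda_{\bar k}=g'\,t^{\lambda}\Lambda_{\bar k}$. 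I expect the only genuinely delicate point to be the scheme-theoretic bookkeeping in (i) — confirming that the set-theoretic $K^{S}$-orbit is $\mathcal{G}r_{\lambda}$ with its reduced structure and that the $L^{+}\mathrm{SL}_3$-action really descends to an honest finite-dimensional algebraic group action — but this is entirely standard for affine Grassmannians (cf.\ \cite{Gortz}), so the argument amounts to quoting it together with the elementary valuation inequality used for (ii).
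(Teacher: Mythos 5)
Your proof is correct and follows essentially the same route as the paper: identify $\mathcal{G}r_{\lambda}$ as the orbit of $t^{\lambda}$ under $L^{+}\mathrm{SL}_3$ acting through its finite-dimensional smooth quotient (via $K^S$-transitivity coming from the Cartan decomposition), and for (ii) compute the stabilizer $L^{+}\mathrm{SL}_3\cap t^{\lambda}L^{+}\mathrm{SL}_3 t^{-\lambda}$ explicitly and check that $\lambda-\mu\in X_*(T)_+$ is exactly the condition for the stabilizer inclusion. The only cosmetic difference is that you spell out the reduction from $K$ to $K^S$ and the reformulation of the valuation inequalities as $m_1-m_1'\ge m_2-m_2'\ge m_3-m_3'$, both of which the paper states more tersely.
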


\begin{proof}
Let $r=m_1+m_2+m_3$.
First note that $\mathcal{G}r_{\lambda}$ is actually a locally closed subscheme of $\mathcal{L}att^{r,(N)}$ for some sufficiently large $N$, and the left action on the projective variety $\mathcal{L}att^{r,(N)}$ of the group scheme $L^{+}\mathrm{SL}_3$ actually acts through its finite dimensional quotient $\mathrm{SL}_3(\bar k[t]/(t^{2N}))$, which is (formally) smooth.

(i) The stabilizer of $t^{\lambda}$ for the action of $L^+{\mathrm{SL}_3}$ is $L^+{\mathrm{SL}_3}\cap t^{\lambda}L^+{\mathrm{SL}_3}t^{-\lambda}$.
The induced map $$L^+{\mathrm{SL}_3}/(L^+{\mathrm{SL}_3}\cap t^{\lambda}L^+{\mathrm{SL}_3}t^{-\lambda})\rightarrow X, g\mapsto gt^{\lambda}$$ then is a locally closed embedding.
Since $\mathcal{G}r_{\lambda}(\bar k)=Kt^{\lambda}K/K=K^St^{\lambda}K/K$, the image is exactly $\mathcal{G}r_{\lambda}$.

(ii) Let us show that $L^+{\mathrm{SL}_3}\cap t^{\lambda}L^+{\mathrm{SL}_3}t^{-\lambda}$ is contained in $L^+{\mathrm{SL}_3}\cap t^{\mu}L^+{\mathrm{SL}_3}t^{-\mu}$ if and only if $\lambda-\mu\in X_*(T)_+$.
Then $\mathcal{G}r_{\lambda}\rightarrow \mathcal{G}r_{\mu}$ is the canonical quotient map
$$L^+{\mathrm{SL}_3}/(L^+{\mathrm{SL}_3}\cap t^{\lambda}L^+{\mathrm{SL}_3}t^{-\lambda})\rightarrow L^+{\mathrm{SL}_3}/(L^+{\mathrm{SL}_3}\cap t^{\mu}L^+{\mathrm{SL}_3}t^{-\mu}).$$
Let $R$ be a $k$-algebra.
Then we have $${\mathrm{SL}_3}(R[[t]])\cap t^{\lambda}\mathrm{SL}_3(R[[t]])t^{-\lambda}=\{(a_{ij})\in \mathrm{SL}_3(R[[t]])\mid \forall i<j,a_{ij}\in (t^{m_i-m_j})\}.$$
Thus ${\mathrm{SL}_3}(R[[t]])\cap t^{\lambda}\mathrm{SL}_3(R[[t]])t^{-\lambda}\subseteq {\mathrm{SL}_3}(R[[t]])\cap t^{\mu}\mathrm{SL}_3(R[[t]])t^{-\mu}$ is equivalent to saying that $m_i-m_j\geq m_i'-m_j'$ for all $i<j$, i.e., $\lambda-\mu=(m_1-m_1',m_2-m_2',m_3-m_3')\in X_*(T)_+$.
\end{proof}

Let us denote by $U$ the unipotent radical of $B$.
Let $\lambda=(m_1,m_2,m_3) \in X_*(T)_+$ and let $J^{\lambda}$ be a $k$-space defined as
$$J^{\lambda}(R)=\{(a_{ij})\in U(R[t])\mid \forall i<j, \deg a_{ij}\le m_i-m_j-1\}.$$
Then by definition, we have $J^{\lambda+M}=J^{\lambda}$, where $M=(m,m,m)\in X_*(T)_+$.
For any $\alpha\in \Phi$ and $k\in \mathbb Z$, we denote by $U_{\alpha, k}$ the image of the homomorphism $\mathbb G_a\rightarrow L\mathrm{GL}_3$ defined by $x\mapsto U_{\alpha}(t^k x)$, where $U_{\alpha}$ is the root subgroup.
Multiplication defines an isomorphism $$\prod_{\alpha\in \Phi_+,\langle \alpha, \lambda \rangle>0}\prod_{k=0}^{\langle \alpha, \lambda \rangle-1}U_{\alpha, k}\rightarrow J^{\lambda}.$$
In particular, $J^{\lambda}$ is isomorphic to the affine space of dimension $2\langle \rho,\lambda\rangle$, where $\rho$ is half the sum of the positive roots.
We will often write $U_{ij,k}$ instead of $U_{\chi_{ij},k}$.

\begin{lemm}
The morphism $J^{\lambda}\rightarrow \mathcal{G}r_{\lambda}$ defined by $g\mapsto gt^{\lambda}$ is an open immersion.
Moreover, $\mathcal{G}r_{\lambda}$ is irreducible and of dimension $2\langle \rho,\lambda\rangle$.
\end{lemm}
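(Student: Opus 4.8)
The plan is to deduce this from the description of $\mathcal{G}r_{\lambda}$ as a homogeneous space obtained in Proposition \ref{Schubertcell}. Set $P_{\lambda}=L^{+}\mathrm{SL}_3\cap t^{\lambda}L^{+}\mathrm{SL}_3t^{-\lambda}$, so that $g\mapsto gt^{\lambda}$ identifies $\mathcal{G}r_{\lambda}$ with $L^{+}\mathrm{SL}_3/P_{\lambda}$ and the morphism in question with the composite $J^{\lambda}\hookrightarrow L^{+}\mathrm{SL}_3\to L^{+}\mathrm{SL}_3/P_{\lambda}$. As in the proof of Proposition \ref{Schubertcell}, all of this factors through the smooth finite-dimensional quotient $\mathrm{SL}_3(\bar k[t]/(t^{2N}))$ for $N$ large enough, so we are dealing with a morphism of finite type $\bar k$-schemes; I would work in this truncated setting throughout.

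First I would compute Lie algebras at the identity. From the matrix description $\mathrm{SL}_3(R[[t]])\cap t^{\lambda}\mathrm{SL}_3(R[[t]])t^{-\lambda}=\{(a_{ij})\mid a_{ij}\in(t^{m_i-m_j})\ \forall i<j\}$ recorded in the proof of Proposition \ref{Schubertcell}, one gets $\mathrm{Lie}(P_{\lambda})=\mathfrak{sl}_3(\mathcal O)\cap\mathrm{Ad}(t^{\lambda})\mathfrak{sl}_3(\mathcal O)$, which in the basis of root vectors $t^{k}e_{\alpha}$ is spanned by the Cartan part, by all $t^{k}e_{\alpha}$ with $\alpha<0$ and $k\ge 0$, and by all $t^{k}e_{\alpha}$ with $\alpha>0$ and $k\ge\langle\alpha,\lambda\rangle$; whereas $\mathrm{Lie}(J^{\lambda})$ is spanned by the $t^{k}e_{\alpha}$ with $\alpha>0$ and $0\le k\le\langle\alpha,\lambda\rangle-1$. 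Hence $\mathrm{Lie}(J^{\lambda})\oplus\mathrm{Lie}(P_{\lambda})=\mathfrak{sl}_3(\mathcal O)$ after truncation, so the differential of $J^{\lambda}\to\mathcal{G}r_{\lambda}$ at the identity is an isomorphism onto $T_{t^{\lambda}}\mathcal{G}r_{\lambda}$. Since $J^{\lambda}\to\mathcal{G}r_{\lambda}$ intertwines left translation on $J^{\lambda}$ with the $L^{+}\mathrm{SL}_3$-action, the differential is then an isomorphism at every $\bar k$-point; as $J^{\lambda}$ is an affine space and $\mathcal{G}r_{\lambda}$ is smooth (Proposition \ref{Schubertcell}), the morphism is étale, and in particular its image $V$ is open in $\mathcal{G}r_{\lambda}$.

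Next I would prove injectivity on $\bar k$-points. If $gt^{\lambda}\Lambda=g't^{\lambda}\Lambda$ with $g,g'\in J^{\lambda}(\bar k)$, then $g'^{-1}g\in P_{\lambda}\cap J^{\lambda}$; here I use that for $\mathrm{GL}_3$ the subscheme $J^{\lambda}$ is in fact a closed subgroup scheme of $L^{+}\mathrm{SL}_3$, because the cube of a strictly upper triangular $3\times 3$ matrix vanishes and the degree bounds defining $J^{\lambda}$ survive the resulting multiplication and inversion. One checks directly that $P_{\lambda}\cap J^{\lambda}$ is trivial: an entry $a_{ij}$ with $i<j$ that is a polynomial of degree $\le m_i-m_j-1$ and divisible by $t^{m_i-m_j}$ must be zero. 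Hence $g=g'$. An étale morphism of varieties over $\bar k$ that is injective on $\bar k$-points is an open immersion onto its image, so $J^{\lambda}\cong V$ and the first assertion follows. Finally, $\mathcal{G}r_{\lambda}$ is the image of the irreducible scheme $\mathrm{SL}_3(\bar k[t]/(t^{2N}))$ under $g\mapsto gt^{\lambda}$, hence irreducible; therefore the nonempty open subset $V\cong J^{\lambda}$ is dense, and $\dim\mathcal{G}r_{\lambda}=\dim J^{\lambda}=2\langle\rho,\lambda\rangle$ by the isomorphism $\prod_{\alpha\in\Phi_+,\langle\alpha,\lambda\rangle>0}\prod_{k=0}^{\langle\alpha,\lambda\rangle-1}U_{\alpha,k}\cong J^{\lambda}$.

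The step I expect to require the most care is the Lie-algebra computation together with the bookkeeping needed to pass to a finite-dimensional truncation in which ``étale'' is literally meaningful; granting that direct-sum decomposition, everything else — injectivity, the implication ``étale $+$ injective $\Rightarrow$ open immersion,'' and then the irreducibility and dimension count — is formal, the only place where $n=3$ genuinely intervenes being the observation that $J^{\lambda}$ is a subgroup.
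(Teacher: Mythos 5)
Your proof is correct, but it is a genuinely different route from the paper's: the paper disposes of this lemma in one line by citing \cite[Lemme 2.2]{NP}, whereas you supply a self-contained argument from Proposition \ref{Schubertcell}. Your plan — pass to a truncation $\mathrm{SL}_3(\bar k[t]/(t^{2N}))$ where everything is finite type, compute that $\mathrm{Lie}(J^{\lambda})\oplus\mathrm{Lie}(P_{\lambda})=\mathfrak{sl}_3(\mathcal O)$ in the root-vector basis, deduce by $J^{\lambda}$-equivariance that the orbit map is \'etale, check $J^{\lambda}\cap P_{\lambda}=\{1\}$ to get injectivity on geometric points, and conclude via ``\'etale $+$ radicial $\Rightarrow$ open immersion'' — is all sound, and the irreducibility and dimension count at the end are formal. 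What the citation buys the paper is brevity and uniformity across groups; what your argument buys is that the reader sees exactly why the open cell is a cell, with the key mechanism (the transversal Lie-algebra decomposition) made explicit.

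One small imprecision worth flagging: you remark that $n=3$ ``genuinely intervenes'' because $J^{\lambda}$ is a subgroup only when cubes of strictly upper-triangular matrices vanish. In fact $J^{\lambda}$ is a subgroup of $L^{+}\mathrm{SL}_n$ for every $n$: if $N$ is strictly upper triangular with $\deg N_{ij}\le \langle\chi_{ij},\lambda\rangle-1$, then each entry of $N^k$ is a sum of products along increasing chains $i<i_1<\cdots<i_{k-1}<j$ of degree at most $\langle\chi_{ij},\lambda\rangle-k$, so both products and inverses stay inside $J^{\lambda}$. Thus your injectivity step needs no appeal to the size of the matrix, and your argument actually works verbatim for $\mathrm{SL}_n$.
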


\begin{proof}
See \cite[Lemme 2.2]{NP}.
\end{proof}
From now on, we see $J^{\lambda}$ as an open subscheme of $\mathcal{G}r_{\lambda}$ by this open immersion.

By Proposition \ref{Schubertcell}, there exists a canonical projection $\mathcal{G}r_{\lambda}\rightarrow \mathcal{G}r_{(1,0,0)}$ (resp.\ $\mathcal{G}r_{\lambda}\rightarrow \mathcal{G}r_{(0,0,-1)}$) if $m_1>m_2$ (resp.\ $m_2>m_3$).
To shorten notation we set $\mathcal{G}r_1=\mathcal{G}r_{(1,0,0)}, \mathcal{G}r_{-1}=\mathcal{G}r_{(0,0,-1)}, J^{1}=J^{(1,0,0)}, J^{-1}=J^{(0,0,-1)}$.
Let $\mathop{\mathrm{Flag}}$ be the reduced closed subscheme of $\mathcal{G}r_1\times \mathcal{G}r_{-1}$ defined as $\mathop{\mathrm{Flag}}(\bar k)=\{(\mathscr L, \mathscr L')\in\mathcal{G}r_1(\bar k)\times \mathcal{G}r_{-1}(\bar k)\mid \mathscr L\supset t\mathscr L'\}$.
Then $\mathop{\mathrm{Flag}}$ can be covered by open subsets isomorphic to the $3$-dimensional affine space.
In particular, we have an isomorphism $$U_{12,0}\times U_{23,0}\times U_{13,0}\cong (J^1\times J^{-1})\cap \mathop{\mathrm{Flag}}.$$

Let $E,S$ be $\bar k$-schemes.
Then a morphism $p\colon E\rightarrow S$ is an {\it affine bundle of rank $n$} over $S$ if $S$ has an open covering by $U_i$, and there are isomorphisms
$$p^{-1}(U_i)\cong U_i\times \mathbb A^n$$
such that $p$ restricted to $p^{-1}(U_i)$ corresponds to the projection from $U_i\times \mathbb A^n$ to $U_i$.

\begin{lemm}
\label{affinebundle}
Let $\lambda=(m_1,m_2,m_3)\in X_*(T)_+$.
\begin{enumerate}[(i)]
\item If $m_1>m_2>m_3$, then the canonical projection $\mathcal{G}r_{\lambda}\rightarrow \mathcal{G}r_1\times \mathcal{G}r_{-1}$ factors through $\mathop{\mathrm{Flag}}$.
Then $\varphi_{\lambda}\colon \mathcal{G}r_{\lambda}\rightarrow \mathop{\mathrm{Flag}}$ is an affine bundle of rank $2(m_1-m_3-1)-1$.
In particular, we have an isomorphism
$$\varphi_{\lambda}^{-1}((J^1\times J^{-1})\cap \mathop{\mathrm{Flag}})=J^{\lambda}\cong ((J^1\times J^{-1})\cap \mathop{\mathrm{Flag}})\times \mathbb A^{2(m_1-m_3-1)-1}$$
such that $\varphi_{\lambda}$ restricted to $\varphi_{\lambda}^{-1}((J^1\times J^{-1})\cap \mathop{\mathrm{Flag}})$ corresponds to the projection from $((J^1\times J^{-1})\cap \mathop{\mathrm{Flag}})\times \mathbb A^{2(m_1-m_3-1)-1}$ to $(J^1\times J^{-1})\cap \mathop{\mathrm{Flag}}$.

\item If $m_1>m_2=m_3 $, then the canonical projection $\varphi_{\lambda}\colon \mathcal{G}r_{\lambda}\rightarrow \mathcal{G}r_1\cong \mathbb P^2$ is an affine bundle of rank $2(m_1-m_3-1)$.
In particular, we have an isomorphism
$$\varphi_{\lambda}^{-1}(J^1)=J^{\lambda}\cong J^1\times \mathbb A^{2(m_1-m_3-1)}$$
such that $\varphi_{\lambda}$ restricted to $\varphi_{\lambda}^{-1}(J^1)$ corresponds to the projection from $J^1\times \mathbb A^{2(m_1-m_3-1)}$ to $J^1$.

\item If $m_1=m_2>m_3$, then the canonical projection $\varphi_{\lambda}\colon \mathcal{G}r_{\lambda}\rightarrow \mathcal{G}r_{-1}\cong \mathbb P^2$ is an affine bundle of rank $2(m_1-m_3-1)$.
In particular, we have an isomorphism
$$\varphi_{\lambda}^{-1}(J^{-1})=J^{\lambda}\cong J^{-1}\times \mathbb A^{2(m_1-m_3-1)}$$
such that $\varphi_{\lambda}$ restricted to $\varphi_{\lambda}^{-1}(J^{-1})$ corresponds to the projection from $J^{-1}\times \mathbb A^{2(m_1-m_3-1)}$ to $J^{-1}$.
\end{enumerate}
\end{lemm}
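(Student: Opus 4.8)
The plan is to use the $L^{+}\mathrm{SL}_3$-equivariance of the canonical projections of Proposition~\ref{Schubertcell} to reduce the whole statement to one explicit computation on the chart $J^{\lambda}$. First, in case (i) one checks that $\varphi_{\lambda}$ takes values in $\mathop{\mathrm{Flag}}$: the canonical projection sends $t^{\lambda}K$ to $(t^{(1,0,0)}K,\,t^{(0,0,-1)}K)$, and $t^{(1,0,0)}\Lambda_{\bar k}=t\mathcal O\oplus\mathcal O\oplus\mathcal O$ contains $t\mathcal O\oplus t\mathcal O\oplus\mathcal O=t\cdot t^{(0,0,-1)}\Lambda_{\bar k}$; since $\mathcal{G}r_{\lambda}$ is a single $L^{+}\mathrm{SL}_3$-orbit and the condition defining $\mathop{\mathrm{Flag}}$ is $L^{+}\mathrm{SL}_3$-stable, the image of $\varphi_{\lambda}$ lies in $\mathop{\mathrm{Flag}}$ (in (ii) and (iii) there is nothing to check here). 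Next, $\mathop{\mathrm{Flag}}$ (resp.\ $\mathcal{G}r_{1}\cong\mathbb P^2$, resp.\ $\mathcal{G}r_{-1}\cong\mathbb P^2$) is covered by the finitely many translates, under representatives of the Weyl group in $\mathrm{SL}_3(\bar k)$, of the open cell $(J^1\times J^{-1})\cap\mathop{\mathrm{Flag}}$ (resp.\ $J^{1}$, resp.\ $J^{-1}$), and $\varphi_{\lambda}$ is $\mathrm{SL}_3(\bar k)$-equivariant; hence a trivialization of $\varphi_{\lambda}$ over this one cell is transported by those translates to a trivialization over each translated cell, with the same affine fibre, and it suffices to treat the one cell.

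For this I would use that $\varphi_{\lambda}(gt^{\lambda}K)=(gt^{(1,0,0)}K,\,gt^{(0,0,-1)}K)$ for $g\in J^{\lambda}$, and carry out the lattice computation—in the style of \cite{NP} and \cite{Kottwitz}—reducing $g\,t^{(1,0,0)}\Lambda_{\bar k}$ and $g\,t^{(0,0,-1)}\Lambda_{\bar k}$ into the open cells of $\mathcal{G}r_{1}$ and $\mathcal{G}r_{-1}$. In the coordinates $J^{\lambda}\cong\prod_{\alpha,k}U_{\alpha,k}$ of the excerpt this exhibits $\varphi_{\lambda}|_{J^{\lambda}}$ as the projection onto the three $k=0$ factors $U_{12,0}\times U_{23,0}\times U_{13,0}\cong(J^1\times J^{-1})\cap\mathop{\mathrm{Flag}}$ (in (ii): onto $U_{12,0}\times U_{13,0}\cong J^{1}$; in (iii): onto $U_{23,0}\times U_{13,0}\cong J^{-1}$), the complementary factor being the product of the $U_{\alpha,k}$ with $k\ge1$—an affine space of dimension $(m_1-m_2-1)+(m_2-m_3-1)+(m_1-m_3-1)=2(m_1-m_3-1)-1$ in case (i), and of dimension $2(m_1-m_3-1)$ in cases (ii) and (iii), where only two of the three roots contribute indices. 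The same computation shows that $J^{\lambda}$ is precisely the preimage under $\varphi_{\lambda}$ of the open cell—equivalently, that the fibre of $\varphi_{\lambda}$ over the base point, an orbit of the stabilizer of the image point, is an affine Schubert cell coinciding with the $k\ge1$ part of $J^{\lambda}$. Combined with the equivariant patching above, this yields the affine bundle $\varphi_{\lambda}$ together with the asserted trivialization over the cell.

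The step I expect to be the main obstacle is this local computation: pinning down, uniformly in $\lambda$ and over all of $\mathrm{SL}_3(\mathcal O)$ (not merely fibrewise), which entries of $g\in J^{\lambda}$ get truncated on passing to $\mathcal{G}r_{\pm1}$, so that $\varphi_{\lambda}|_{J^{\lambda}}$ really is a coordinate projection and $J^{\lambda}$ really is the preimage of the base cell. Everything else—the factoring through $\mathop{\mathrm{Flag}}$, the equivariant globalization over the Weyl translates, and the dimension bookkeeping—is formal.
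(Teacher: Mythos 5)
Your blueprint matches the paper's: show that $J^{\lambda}$ is exactly the preimage of the open cell, with $\varphi_{\lambda}$ restricting there to a coordinate projection in the root-subgroup coordinates, then glue over the finitely many Weyl translates of that cell. The paper carries out the local computation you flag as the main obstacle by factoring $\varphi_{\lambda}$ into a chain of one-degree-at-a-time projections $\phi_{m_1,m_3}\colon\mathcal{G}r_{(m_1+1,0,m_3)}\to\mathcal{G}r_{(m_1,0,m_3)}$ and $\psi_{m_1,m_3}\colon\mathcal{G}r_{(m_1,0,m_3-1)}\to\mathcal{G}r_{(m_1,0,m_3)}$, establishing the preimage identity $J^{(m_1+1,0,m_3)}=\phi_{m_1,m_3}^{-1}(J^{\lambda})$ (and its analogues) at each step by solving an explicit invertible $2\times2$ linear system in the matrix entries.
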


\begin{proof}
Since $\mathcal{G}r_{\lambda+M}\cong \mathcal{G}r_{\lambda}$, where $M=(m, m, m)$, we may assume $m_2=0$.
Then let us write 
\begin{gather*}
\phi_{m_1,m_3}\colon \mathcal{G}r_{(m_1+1,0,m_3)}\rightarrow \mathcal{G}r_{(m_1,0,m_3)}\quad (m_1>0), \\
\psi_{m_1,m_3}\colon \mathcal{G}r_{(m_1,0,m_3-1)}\rightarrow \mathcal{G}r_{(m_1,0,m_3)}\quad (m_3<0)
\end{gather*}
for the canonical projections.
First, let us show that (1) $J^{\lambda}=\varphi_{\lambda}^{-1}((J^1\times J^{-1})\cap \mathop{\mathrm{Flag}})$ (2) $J^{\lambda}=\varphi_{\lambda}^{-1}(J^1)$ (3) $J^{\lambda}=\varphi_{\lambda}^{-1}(J^{-1})$ corresponding to the equation in (i), (ii) and (iii) respectively.
For this, it suffices to show 
\begin{gather*}
J^{(1,0,-1)}=\varphi_{(1,0,-1)}^{-1}((J^1\times J^{-1})\cap \mathop{\mathrm{Flag}}), \\
J^{(m_1+1,0,m_3)}=\phi_{m_1,m_3}^{-1}(J^{\lambda}),\quad J^{(m_1,0,m_3-1)}=\psi_{m_1,m_3}^{-1}(J^{\lambda})
\end{gather*}
where $\lambda=(m_1,0,m_3)$.
Indeed, any $\varphi_{\lambda}$ can be obtained as the composite of some $\phi_{m_1',m_3'},\psi_{m_1',m_3'},$ and $\varphi_{(1,0,-1)}$.

Obviously, we have $\phi_{m_1,m_3}(J^{(m_1+1,0,m_3)})\subseteq J^{\lambda}$.
To see $J^{(m_1+1,0,m_3)}=\phi_{m_1,m_3}^{-1}(J^{\lambda})$, assume that $\phi_{m_1,m_3}(\bar g)\in J^{\lambda}(\bar k)$ for a matrix $g\in \mathrm{SL}_3(\mathcal O)$.
This is equivalent to saying that there exist matrices $v\in J^{\lambda}(\bar k)$ and $t^{\lambda}at^{-\lambda}\in \mathrm{SL}_3(\mathcal O)\cap t^{\lambda}\mathrm{SL}_3(\mathcal O)t^{-\lambda}$ such that $$g=vt^{\lambda}at^{-\lambda}.$$
Then it suffices to show that for any such $g$, there exists a matrix
$$u=
\begin{pmatrix}
1 & u_{12}t^{m_1} & u_{13}t^{m_1-m_3} \\
0 & 1 & 0 \\
0 & 0 & 1 \\
\end{pmatrix}
\in U_{12,m_1}\times U_{13,m_1-m_3}, u_{12},u_{13}\in \bar k$$
satisfying $$u^{-1}v^{-1}g=u^{-1}t^{\lambda}at^{-\lambda}\in \mathrm{SL}_3(\mathcal O)\cap t^{(m_1+1,0,m_3)}\mathrm{SL}_3(\mathcal O)t^{(-m_1-1,0,-m_3)}.$$
Let $a=(a_{ij})=(a_{ij}(t))$.
Then this condition holds if and only if 
\begin{gather*}
(u^{-1}t^{\lambda}at^{-\lambda})_{12}=t^{m_1}(a_{12}-u_{12}a_{22}-u_{13}a_{32})\in (t^{m_1+1})\ \text{and}\\
(u^{-1}t^{\lambda}at^{-\lambda})_{13}=t^{m_1-m_3}(a_{13}-u_{12}a_{23}-u_{13}a_{33})\in (t^{m_1-m_3+1}).
\end{gather*}
Since $t^{\lambda}at^{-\lambda}\in \mathrm{SL}_3(\mathcal O)\cap t^{\lambda}\mathrm{SL}_3(\mathcal O)t^{-\lambda}$ and $m_1>0$, $a_{21}$ and $a_{31}$ are not units.
So the cofactor expansion of $a$ along the first column implies that the determinant of the matrix
$$A(t)=
\begin{pmatrix}
a_{22}(t) & a_{32}(t) \\
a_{23}(t) & a_{33}(t) \\
\end{pmatrix}
$$
is a unit.
Equivalently we have $\det A(0)\neq 0$, and then we can find a solution $(u_{12}, u_{13})$ of the conditions $a_{12}-u_{12}a_{22}-u_{13}a_{32}, a_{13}-u_{12}a_{23}-u_{13}a_{33}\in (t)$, i.e., 
$$A(0)
\begin{pmatrix}
u_{12}  \\
u_{13}  \\
\end{pmatrix}
=
\begin{pmatrix}
a_{12}(0)  \\
a_{13}(0)  \\
\end{pmatrix}.
$$
Thus we get $J^{(m_1+1,0,m_3)}=\phi_{m_1,m_3}^{-1}(J^{\lambda})$ by what we have just proven.
The proof for $$J^{(m_1,0,m_3-1)}=\psi_{m_1,m_3}^{-1}(J^{\lambda})$$ is similar ($m_3<0$ yields $a_{33}(0)\neq 0$, and then we may find the solutions $u_{13},u_{23}$).

Next we must show $J^{(1,0,-1)}=\varphi_{(1,0,-1)}^{-1}((J^1\times J^{-1})\cap \mathop{\mathrm{Flag}})$.
Assume that a matrix $g\in \mathrm{SL}_3(\mathcal O)$ satisfies $\varphi_{(1,0,-1)}(\bar g)\in J^1(\bar k)\times J^{-1}(\bar k)$.
Let $\lambda_1=(1,0,0), \lambda_{-1}=(0,0,-1)$.
Then $\varphi_{(1,0,-1)}(g)\in J^1(\bar k)\times J^{-1}(\bar k)$ is equivalent to saying that there exist matrices $v_i\in J^{i}(\bar k),t^{\lambda_i}a_{\lambda_i}t^{\lambda_i}\in \mathrm{SL}_3(\mathcal O)\cap t^{\lambda_i}\mathrm{SL}_3(\mathcal O)t^{-\lambda_i}$ $(i=\pm 1)$ such that $$g=v_1t^{\lambda_1}a_{\lambda_1}t^{-\lambda_1}=v_{-1}t^{\lambda_{-1}}a_{\lambda_{-1}}t^{-\lambda_{-1}}.$$
Let us write 
$$v_1=
\begin{pmatrix}
1 & z_1 & z_4 \\
0 & 1 & 0 \\
0 & 0 & 1 \\
\end{pmatrix},\quad
v_{-1}=
\begin{pmatrix}
1 & 0 & z_3 \\
0 & 1 & z_2 \\
0 & 0 & 1 \\
\end{pmatrix},\quad
z_1, z_2, z_3, z_4\in \bar k.
$$
Further, set $a_{\lambda_{-1}}=(a_{ij})=(a_{ij}(t))$ and
$$u=
\begin{pmatrix}
1 & z_1 & a_{33}(0)^{-1}(a_{13}(0)-z_1a_{23}(0))t \\
0 & 1 & 0 \\
0 & 0 & 1 \\
\end{pmatrix}.
$$
Note that we have $a_{32}(0)=0$ and $a_{33}(0)\neq 0$ because $t^{\lambda_{-1}}a_{\lambda_{-1}}t^{\lambda_{-1}}\in \mathrm{SL}_3(\mathcal O)\cap t^{\lambda_{-1}}\mathrm{SL}_3(\mathcal O)t^{-\lambda_{-1}}$.
Then $(v_1)^{-1}v_{-1}=t^{\lambda_1}a_{\lambda_1}t^{-\lambda_1}(t^{\lambda_{-1}}a_{\lambda_{-1}}t^{-\lambda_{-1}})^{-1}$ yields $z_4=z_3-z_1z_2$ (resp.\ $a_{12}(0)-z_1a_{22}(0)=0$) by comparing the (1,3) (resp.\ (1,2)) entry of the matrices.
In particular, we have $\varphi_{(1,0,-1)}(J^{(1,0,-1)})\subseteq (J^1\times J^{-1})\cap \mathop{\mathrm{Flag}}$.
Moreover, using these equations, one can check that $$u^{-1}(v_{-1})^{-1}g\in \mathrm{SL}_3(\mathcal O)\cap t^{(1,0,-1)}\mathrm{SL}_3(\mathcal O)t^{(-1,0,1)}.$$
Thus we get $J^{(1,0,-1)}=\varphi_{(1,0,-1)}^{-1}((J^1\times J^{-1})\cap \mathop{\mathrm{Flag}})$.

We can cover $\mathop{\mathrm{Flag}}$ (resp.\ $\mathcal{G}r_1$, resp.\ $\mathcal{G}r_{-1}$) by suitable open subvarieties isomorphic to $U=(J^1\times J^{-1})\cap \mathop{\mathrm{Flag}}$ (resp.\ $J^1$, resp.\ $J^{-1}$).
Indeed, there exists a finite set $\{g_i\}_i$ with $g_i\in \mathrm{SL}_3(\bar k)$ such that $\{g_iU\}_i$ is an open covering (for example, we can take the set of standard representatives of the finite Weyl group of $\mathrm{SL}_3$).
Since each $J^{\lambda}$ is a product of root subgroups,
one easily verifies that such an open covering defines the structure of an affine bundle, and its relative dimension is $2(m_1-m_3-1)-1$ (resp.\ $2(m_1-m_3-1)$, resp.\ $2(m_1-m_3-1)$).
\end{proof}

\begin{rema}
For $\lambda\in X_*(T)_+$, there is a natural projection $$\mathcal{G}r_{\lambda}\rightarrow \mathrm{SL}_n/P_{\lambda}$$ induced by $L^+\mathrm{SL}_n\rightarrow \mathrm{SL}_n, t\mapsto 0$, where $P_{\lambda}$ is the stabilizer of $t^{\lambda}$ in $\mathrm{SL}_n$.
This projection is actually an affine bundle, and the lemma is the special case of this fact (see for example \cite[Section 2]{MV} that treats the affine Grassmannian over $\mathbb C$).
\end{rema}

Note that the set $\mathcal{G}r_{[\lambda]}(Q)$ (Section \ref{b=1}) can be identified with the Schubert cell $\mathcal{G}r_{\lambda}(\bar k)$ for any vertex $Q$ in $\mathcal B_1$ and $\lambda \in X_*(T)_+$. 
So we can see this set as a variety, and denote it also by $\mathcal{G}r_{[\lambda]}(Q)$ (of course, this definition is independent of the choice of $\lambda$).
Let $\mu\in X_*(T)_+$ with $\lambda-\mu\in X_*(T)_+$.
Via this identification, the projection (Proposition \ref{Schubertcell}) $$\varphi_{\lambda, \mu}\colon \mathcal{G}r_{\lambda}(\bar k)\rightarrow \mathcal{G}r_{\mu}(\bar k)$$ can also be described as follows.
Let $P\in \mathcal{G}r_{[\lambda]}(Q)$.
Then there exists a unique vertex in $\mathcal{G}r_{[\mu]}(Q)\cap cl(Q,P)$.
So we have a map $$\mathcal{G}r_{[\lambda]}(Q)\rightarrow \mathcal{G}r_{[\mu]}(Q),$$
which sends $P$ to the unique vertex in $\mathcal{G}r_{[\mu]}(Q)\cap cl(Q,P)$.
This map corresponds to $\varphi_{\lambda, \mu}$ through the identification.
For example, if $Q=[\Lambda_{\bar k}]$ and $P=[gt^{\lambda}\Lambda_{\bar k}]$ with $g\in \mathrm{SL}_3(\mathcal O)$, then the unique vertex in $\mathcal{G}r_{[\mu]}([\Lambda_{\bar k}])\cap cl([\Lambda_{\bar k}],P)$ is $[gt^{\mu}\Lambda_{\bar k}]$.
Indeed, $cl([\Lambda_{\bar k}], [t^{\lambda}\Lambda_{\bar k}])$ is isomorphic to $cl([\Lambda_{\bar k}], [gt^{\lambda}\Lambda_{\bar k}])$ by multiplication with $g$.
In particular, $\varphi_{\lambda}$ (Lemma \ref{affinebundle}) can be seen as a morphism mapping $P\in \mathcal{G}r_{[\lambda]}(Q)$ to the first alcove or vertex of $cl(Q, P)$.

\subsection{Subvarieties of the Schubert Cells}
\label{subvarieties}
We can identify $\mathop{\mathrm{Flag}}(\bar k)$ with the set of chambers containing $[\Lambda_{\bar k}]$.
Then we define a locally closed subvariety $X_u$ of $\mathop{\mathrm{Flag}}$ by 
$$X_{u}(\bar k)=\{C\in \mathop{\mathrm{Flag}}(\bar k)\mid \text{$C$ and $\sigma C$ have relative position $u$}\},$$
where $u=$ I, I\hspace{-.1em}I or I\hspace{-.1em}I\hspace{-.1em}I.
Let $W$ be the finite Weyl group of $\mathrm{GL}_3$.
Obviously, if $u=$ I (resp.\ I\hspace{-.1em}I, resp.\ I\hspace{-.1em}I\hspace{-.1em}I), then $X_u$ is the classical Deligne-Lusztig variety associated with the maximal length (resp.\ a Coxeter, resp.\  a Coxeter) element in $W$.
In particular, if $u=$ I\hspace{-.1em}I or I\hspace{-.1em}I\hspace{-.1em}I, then $X_u$ can be identified with the Drinfeld upper half space (of dimension $2$)
$$\Omega=\mathbb P^2 \setminus \bigcup_{H\in \mathcal H} H,$$
where $\mathcal H$ is the set of $k$-rational hyperplanes in $\mathbb P^2$.

Next we introduce subsets of $\mathcal{G}r_{[\mu]}(Q)$ for each vertex $Q$ in $\mathcal B_1$ (Section \ref{ADLVset}).
For any $[\mu]=[e_1,e_2,e_3]\in X_*(T)'_+$, set
\begin{equation*}
\mathcal{G}r_{[\mu]}^{\mathrm{I}}(Q)=
\left \{P\in \mathcal{G}r_{[\mu]}(Q) \left|
\begin{array}{l}
\text{there exists a first chamber $C$ of $cl(Q,P)$}\\
\text{such that $C$ and $\sigma C$ have relative position I}
\end{array}
\right.\right\}.
\end{equation*}
In case $e_1=e_2$ or $e_2=e_3$, then $P\in \mathcal{G}r_{[\mu]}(Q)$ belongs to $\mathcal{G}r_{[\mu]}^{\mathrm{I}}(Q)$ if and only if the first ``vertex" of $cl(Q,P)$ is not contained in $\mathcal B_1$.
Indeed, let $P_0$ be the first vertex of $cl(Q, P)$ which is not contained in $\mathcal B_1$.
Then chambers $C=\{Q, P_0, P_1\}$ and $\sigma C$ have relative position I\hspace{-.1em}I or I\hspace{-.1em}I\hspace{-.1em}I if and only if $P_1$ or $\sigma P_1\in cl(P_0, \sigma P_0)$.
So if we take $P_1$ such that $P_1,\sigma P_1\notin cl(P_0, \sigma P_0)$, then $C$ and $\sigma C$ have relative position I.
Further, for any $[\mu]=[e_1,e_2,e_3] \in X_*(T)'_+$ with $e_1>e_2>e_3$, set
\begin{equation*}
\mathcal{G}r_{[\mu]}^{u}(Q)=
\left \{P\in \mathcal{G}r_{[\mu]}(Q) \left|
\begin{array}{l}
\text{$C$ and $\sigma C$ have relative position $u$, where}\\
\text{$C$ is the unique first chamber of $cl(Q,P)$}\\
\end{array}
\right.\right\},
\end{equation*}
where $u=$ I\hspace{-.1em}I or I\hspace{-.1em}I\hspace{-.1em}I.

For any $P\in \mathcal{G}r_{[\mu]}^{u}(Q)$, let $(C_0,\ldots, C_r)$ be the special gallery connecting $Q$ to $P$, and let $1\le j<m_{\mu}, m_{\mu}=\min\{e_1-e_2, e_2-e_3\}$.
Let $a=2(e_1-e_2-1)$.
If $u=$ I\hspace{-.1em}I, then we define the sets
\begin{equation*}
\mathcal{G}r_{[\mu]}^{\mathrm{I\hspace{-.1em}I},j}(Q)=
\left \{P\in \mathcal{G}r_{[\mu]}^{\mathrm{I\hspace{-.1em}I}}(Q) \left|
\begin{array}{l}
\text{$\sigma C_{a+2i}\subset cl(C_{2i}, \sigma C_{a+2i-1})$ for $1\le i<j$, and}\\
\text{$\sigma C_{a+2j}\nsubseteq cl(C_{2j}, \sigma C_{a+2j-1})$}\\
\end{array}
\right.\right\}
\end{equation*}
and
$$
\mathcal{G}r_{[\mu]}^{\mathrm{I\hspace{-.1em}I},m_{\mu}}(Q)=
\{P\in \mathcal{G}r_{[\mu]}^{\mathrm{I\hspace{-.1em}I}}(Q)\mid 
\text{$\sigma C_{a+2i}\subset cl(C_{2i}, \sigma C_{a+2i-1})$ for $1\le i< m_{\mu}$}\}.
$$
If $u=$ I\hspace{-.1em}I\hspace{-.1em}I, then we define the sets
\begin{equation*}
\mathcal{G}r_{[\mu]}^{\mathrm{I\hspace{-.1em}I\hspace{-.1em}I},j}(Q)=
\left \{P\in \mathcal{G}r_{[\mu]}^{\mathrm{I\hspace{-.1em}I\hspace{-.1em}I}}(Q) \left|
\begin{array}{l}
\text{$C_{a+2i}\subset cl(C_{a+2i-1}, \sigma C_{2i})$ for $1\le i<j$, and}\\
\text{$C_{a+2j}\nsubseteq cl(C_{a+2j-1}, \sigma C_{2j})$}\\
\end{array}
\right.\right\}
\end{equation*}
and
$$
\mathcal{G}r_{[\mu]}^{\mathrm{I\hspace{-.1em}I\hspace{-.1em}I},m_{\mu}}(Q)=
\{P\in \mathcal{G}r_{[\mu]}^{\mathrm{I\hspace{-.1em}I\hspace{-.1em}I}}(Q)\mid 
\text{$C_{a+2i}\subset cl(C_{a+2i-1}, \sigma C_{2i})$ for $1\le i< m_{\mu}$}\}.
$$

Recall that $\mathcal{G}r_{[\mu]}(Q)$ can be identified with the Schubert cell $\mathcal{G}r_{\mu}$ for any vertex $Q$ in $\mathcal B_1$ and $\mu \in X_*(T)_+$.
Moreover, the subsets of $\mathcal{G}r_{[\mu]}(Q)$ defined above can be seen as locally closed reduced subvarieties of the variety $\mathcal{G}r_{[\mu]}(Q)$, and are denoted by the same symbols (see the next proposition).

\begin{prop}
\label{geometricstructure}
Let notation be as above.
\begin{enumerate}[(i)]
\item Let $[\mu]=[e_1,e_2,e_3]\in X_*(T)'_+$.
If $e_1=e_2>e_3$ or $e_1>e_2=e_3$ (resp.\  $e_1>e_2>e_3$),
then the $\bar k$-variety $\mathcal{G}r_{[\mu]}^{\mathrm{I}}(Q)$ is an affine bundle of rank $2(e_1-e_3)-2$ (resp.\ $2(e_1-e_3)-3$) over $\mathbb P^2\setminus \mathbb P^2(k)$ (resp.\ $X_{\mathrm{I}}$).
\item Let $[\mu]=[e_1,e_2,e_3] \in X_*(T)'_+$ with $e_1>e_2>e_3$.
Then the $\bar k$-variety $\mathcal{G}r_{[\mu]}^{\mathrm{I\hspace{-.1em}I}}(Q)$ (resp.\ $\mathcal{G}r_{[\mu]}^{\mathrm{I\hspace{-.1em}I},j}(Q)$, resp.\ $\mathcal{G}r_{[\mu]}^{\mathrm{I\hspace{-.1em}I},m_{\mu}}(Q)$) is contained in $J^{\mu}$ via the identification above, and isomorphic to $\Omega\times \mathbb A^{2(e_1-e_3)-3}$ (resp.\ $\Omega\times \mathbb G_m\times \mathbb A^{2(e_1-e_3)-j-3}$, resp.\ $\Omega\times \mathbb A^{2(e_1-e_3)-m_{\mu}-2}$).
\item Let $[\mu]=[e_1,e_2,e_3] \in X_*(T)'_+$ with $e_1>e_2>e_3$.
Then the $\bar k$-variety $\mathcal{G}r_{[\mu]}^{\mathrm{I\hspace{-.1em}I\hspace{-.1em}I}}(Q)$ (resp.\ $\mathcal{G}r_{[\mu]}^{\mathrm{I\hspace{-.1em}I\hspace{-.1em}I},j}(Q)$, resp.\ $\mathcal{G}r_{[\mu]}^{\mathrm{I\hspace{-.1em}I\hspace{-.1em}I},m_{\mu}}(Q)$) is contained in $J^{\mu}$ via the identification above, and isomorphic to $\Omega\times \mathbb A^{2(e_1-e_3)-3}$ (resp.\ $\Omega\times \mathbb G_m\times \mathbb A^{2(e_1-e_3)-j-3}$, resp.\ $\Omega\times \mathbb A^{2(e_1-e_3)-m_{\mu}-2}$).
\end{enumerate}
\end{prop}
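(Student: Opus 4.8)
The plan is to set up explicit coordinates on $\mathcal{G}r_{[\mu]}(Q)$ using the open immersion $J^\mu \hookrightarrow \mathcal{G}r_\mu$, translate each of the combinatorial conditions (relative position I, I\hspace{-.1em}I, I\hspace{-.1em}I\hspace{-.1em}I, and the bend conditions) into algebraic conditions on the entries of a matrix $g \in J^\mu$, and then identify the resulting locally closed subsets with the stated products. Without loss of generality we may fix $Q = [\Lambda_{\bar k}]$ and normalize $\mu = (e_1, e_2, e_3)$; by Lemma~\ref{affinebundle} every $P \in J^\mu$ is represented by $g t^\mu \Lambda_{\bar k}$ for a unique $g \in J^\mu(\bar k)$, and $\varphi_\mu$ sends $P$ to the first chamber (or vertex) of $cl(Q, P)$, which in these coordinates is computed by the projections $\phi_{m_1,m_3}, \psi_{m_1,m_3}$ and $\varphi_{(1,0,-1)}$ already analyzed in the proof of Lemma~\ref{affinebundle}.

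For (i), the key observation is that whether a first chamber $C$ of $cl(Q,P)$ has relative position I with $\sigma C$ depends only on the image of $P$ in $\mathop{\mathrm{Flag}}$ (when $e_1 > e_2 > e_3$) or in $\mathbb P^2 \cong \mathcal{G}r_1$ or $\mathcal{G}r_{-1}$ (when one equality holds), because the relative position of $C$ and $\sigma C$ is detected at the level of the first chamber/vertex alone. Thus $\mathcal{G}r_{[\mu]}^{\mathrm{I}}(Q)$ is the preimage under $\varphi_\mu$ of $X_{\mathrm{I}} \subset \mathop{\mathrm{Flag}}$ (resp.\ of $\mathbb P^2 \setminus \mathbb P^2(k)$, the locus where the first vertex leaves $\mathcal B_1$). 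Since $\varphi_\mu$ is an affine bundle of rank $2(e_1 - e_3 - 1) - 1 = 2(e_1-e_3)-3$ (resp.\ $2(e_1 - e_3 - 1) = 2(e_1 - e_3) - 2$) by Lemma~\ref{affinebundle}, restricting an affine bundle to a locally closed subvariety of the base yields an affine bundle of the same rank, giving (i).

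For (ii) and (iii), the strategy is to work entirely inside $J^\mu$. First I would show $\mathcal{G}r_{[\mu]}^{\mathrm{I\hspace{-.1em}I}}(Q) \subset J^\mu$ (and likewise for I\hspace{-.1em}I\hspace{-.1em}I): the condition that the unique first chamber $C$ of $cl(Q,P)$ have relative position I\hspace{-.1em}I or I\hspace{-.1em}I\hspace{-.1em}I with $\sigma C$ forces the first vertex of $cl(Q,P)$ to leave $\mathcal B_1$, hence $P$ lies in the big cell. Then, using the description of $\varphi_\mu$ as ``pass to the first chamber'', the base direction of the affine bundle $\varphi_\mu|_{J^\mu}$ is $(J^1 \times J^{-1}) \cap \mathop{\mathrm{Flag}} \cong U_{12,0} \times U_{23,0} \times U_{13,0}$, and the sub-locus where $C$ and $\sigma C$ have relative position I\hspace{-.1em}I\hspace{-.1em}I (resp.\ I\hspace{-.1em}I) is exactly a copy of $\Omega \cong X_{\mathrm{I\hspace{-.1em}I}}$ (resp.\ $X_{\mathrm{I\hspace{-.1em}I\hspace{-.1em}I}}$) sitting inside this $3$-dimensional cell, together with the remaining affine fiber coordinates, giving the factor $\Omega \times \mathbb A^{2(e_1-e_3)-3}$. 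For the refined loci $\mathcal{G}r_{[\mu]}^{\mathrm{I\hspace{-.1em}I},j}(Q)$ etc., the bend conditions $\sigma C_{a+2i} \subset cl(C_{2i}, \sigma C_{a+2i-1})$ for $i < j$ and the failure at $i = j$ translate, via Lemma~\ref{LEMMA1.2} and the thickness of the building, into: certain of the fiber coordinates are forced to specific values (the bend-free steps are rigid), one fiber coordinate is constrained to be nonzero (the step where the bend occurs — this is where the $\mathbb G_m$ factor comes from), and the rest remain free affine coordinates, yielding $\Omega \times \mathbb G_m \times \mathbb A^{2(e_1-e_3)-j-3}$; the terminal case $j = m_\mu$ (no bend ever fails before the end) loses the $\mathbb G_m$ but gains one free coordinate, giving $\Omega \times \mathbb A^{2(e_1-e_3)-m_\mu-2}$. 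The count of how many coordinates are rigidified versus freed is what produces the precise exponents.

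The main obstacle will be step (ii)/(iii): making the translation from the gallery-theoretic bend conditions to explicit equations on the matrix entries of $g \in J^\mu$ completely rigorous, and checking the bookkeeping of which coordinates get pinned, which becomes a $\mathbb G_m$, and which stay free — this requires carefully iterating the single-step analysis in the proof of Lemma~\ref{affinebundle} (the maps $\phi_{m_1,m_3}, \psi_{m_1,m_3}$) along the whole special gallery and matching the $\sigma$-twisted relative-position data at each edge. By contrast, once the coordinate dictionary is in place, identifying the $\Omega$ factor with $X_{\mathrm{I\hspace{-.1em}I}}$ or $X_{\mathrm{I\hspace{-.1em}I\hspace{-.1em}I}}$ is immediate from Section~\ref{subvarieties}, and the affine-bundle-restriction argument for (i) is routine.
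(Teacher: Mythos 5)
Your approach is essentially the same as the paper's: fix $Q = [\Lambda_{\bar k}]$, work in the open cell $J^\mu \hookrightarrow \mathcal{G}r_\mu$, use $\varphi_\mu$ from Lemma~\ref{affinebundle}, and translate the gallery conditions into algebraic conditions on the entries of $g \in J^\mu$. Part (i) is handled exactly as in the paper and is fine; the observation that $\mathcal{G}r_{[\mu]}^{\mathrm{I}}(Q) = \varphi_\mu^{-1}(X_{\mathrm{I}})$ (resp.\ $\varphi_\mu^{-1}(\mathbb P^2 \setminus \mathbb P^2(k))$) and that restricting an affine bundle to a locally closed subset of the base is routine, is correct.

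For (ii)/(iii), however, you have correctly located but not resolved the real difficulty, and the device you propose --- ``iterating the single-step analysis in the proof of Lemma~\ref{affinebundle}'' --- will not produce it by itself. The issue is the nature of the ``pinning'' constraint for the bend-free steps. You describe it as ``certain fiber coordinates are forced to specific values,'' but those values are not constants and they are not determined by the cell coordinates of $g$ alone: they depend on the Frobenius twist $g^{-1}\sigma(g)$. The paper's key construction, which you would need to rediscover, is the morphism of $\bar k$-spaces $J^{\mu'} \to X^S$, $j \mapsto t^{-\mu'}j^{-1}\sigma(j)t^{\mu'}\Lambda_R$, shown to factor through a Schubert cell $\mathcal{G}r_{[\lambda]}$ and then (after projecting to $\mathcal{G}r_{[0,0,-1]} \cong \mathbb P^2$) through a fixed affine chart $\mathbb A^1 \subset \mathbb P^2$, yielding a \emph{regular} ``gate'' function $\varphi$ on $\mathcal{G}r_{[\mu']}^{\mathrm{I\hspace{-.1em}I},m_{\mu'}}$. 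Only with this in hand is the locus $\mathcal{G}r_{[\mu]}^{\mathrm{I\hspace{-.1em}I},m_\mu}$ exhibited as the closed graph $\{u_{12} = \varphi\}$ inside $\mathcal{G}r_{[\mu']}^{\mathrm{I\hspace{-.1em}I},m_{\mu'}} \times U_{12,e_1-e_2-1} \times U_{13,e_1-e_3-1}$ (and $\mathcal{G}r_{[\mu]}^{\mathrm{I\hspace{-.1em}I},j}$ as the open locus $\{u_{12} \neq \varphi\}$ in the analogous $j$-th stratum), which is what actually yields the factors $\mathbb A^1$ and $\mathbb G_m$ scheme-theoretically rather than just set-theoretically. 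Without the gate function, ``translate to equations on $g$'' remains a hope rather than a method, since the bend condition is not a polynomial condition on the entries of $g$ in any obvious coordinates.

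Two smaller omissions in the same direction: the paper organizes the argument as an induction on $m_\mu$, and, before the induction proper, first reduces to the case $e_1 - e_2 = m_\mu$ (when $e_2 - e_3 = m_\mu$, it pulls back along the projection to $[2e_2 - e_3, e_2, e_3]$ and shows the preimage of $\mathcal{G}r_{[\mu']}^{\mathrm{I\hspace{-.1em}I},m_{\mu'}}$ is $\mathcal{G}r_{[\mu]}^{\mathrm{I\hspace{-.1em}I},m_\mu}$ using a retraction argument). Your sketch would need some such normalization before the ``one step at a time'' bookkeeping can be made to match the paper's dimension counts.
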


\begin{proof}
It suffices to prove the case for $Q=[\Lambda_{\bar k}]$.
We omit $[\Lambda_{\bar k}]$ from the notation (for instance, $\mathcal{G}r_{[\mu]}^{\mathrm{I}}=\mathcal{G}r_{[\mu]}^{\mathrm{I}}([\Lambda_{\bar k}])$).
Further, we set $\mu=(e_1, e_2, e_3)\in X_*(T)_+$.
As explained in the last part of Section \ref{theschubertcells}, for any $[\mu]=[e_1,e_2,e_3]\in X_*(T)'_+$ with $e_1>e_2>e_3$ (resp.\ $e_1=e_2>e_3$ or $e_1>e_2=e_3$), we have the affine bundle
$$\text{$\varphi_{\mu}\colon \mathcal{G}r_{[\mu]}\rightarrow \mathop{\mathrm{Flag}}$ (resp.\ $\varphi_{\mu}\colon \mathcal{G}r_{[\mu]}\rightarrow \mathbb P^2$)}.$$
So (i) follows immediately from the facts explained before the proposition.
Let us prove (ii).
The proof for (iii) is similar.

First note that $X_{\mathrm{I\hspace{-.1em}I}}\subset \mathop{\mathrm{Flag}}$ is actually contained in $(J^1\times J^{-1})\cap \mathop{\mathrm{Flag}}$.
Indeed, for any $C\in X_{\mathrm{I\hspace{-.1em}I}}(\bar k)$ and the vertex $P_i$ $(i=1,2)$ of type $i$ in $C$, the only vertex in $\mathcal B_1$ contained in $cl(P_i, \sigma P_i)$ is $[\Lambda_{\bar k}]$.
In particular, both of the sets $\{[\Lambda_{\bar k}], P_1, [\mathcal O\oplus t\mathcal O\oplus t\mathcal O]\}$ and $\{[\Lambda_{\bar k}], P_2, [\mathcal O\oplus \mathcal O\oplus t\mathcal O]\}$ are not chambers, so $P_1\in J^1(\bar k)$ and $P_2\in J^{-1}(\bar k)$.

\begin{center}
\begin{tikzpicture}
 \draw (-2,0)--(2,0)--(1,-1.732)--(-1,-1.732)--cycle;
 \draw (0,0)--(-2,0)--cycle;
 \draw (0,0)--(2,0)--cycle;
 \draw (0,0)--(1,-1.732)--cycle;
 \draw (0,0)--(-1,-1.732)--cycle;
 \draw (0,0)node[above]{$[\Lambda_{\bar k}]$};
 \draw (-1,-1.732)node[below]{$P_1$};
 \draw (2,0)node[above]{$\sigma P_1$};
 \draw (1,-1.732)node[below]{$\sigma P_2$};
 \draw (-2,0)node[above]{$P_2$};
 \draw (-1,-0.65)node{$C$};
 \draw (1,-0.65)node{$\sigma C$};
\end{tikzpicture}
\end{center}
Then, by Lemma \ref{affinebundle}, we have $\mathcal{G}r_{[\mu]}^{\mathrm{I\hspace{-.1em}I}}\subset J^{\mu}$ and
 $$\mathcal{G}r_{[\mu]}^{\mathrm{I\hspace{-.1em}I}}=\varphi_{\mu}^{-1}(X_{\mathrm{I\hspace{-.1em}I}})\cong X_{\mathrm{I\hspace{-.1em}I}}\times \mathbb A^{2(e_1-e_3)-3}\cong \Omega \times \mathbb A^{2(e_1-e_3)-3}$$
as a locally closed subvariety of $\mathcal{G}r_{[\mu]}$.

Next, we show the statement for $\mathcal{G}r_{[\mu]}^{\mathrm{I\hspace{-.1em}I},m_{\mu}}$ by induction on $m_{\mu}=\min\{e_1-e_2, e_2-e_3\}$.
If $m_{\mu}=1$, then $\mathcal{G}r_{[\mu]}^{\mathrm{I\hspace{-.1em}I},m_{\mu}}=\mathcal{G}r_{[\mu]}^{\mathrm{I\hspace{-.1em}I}}$ and the statement follows from the isomorphism above.
Let us suppose that $m_{\mu}\geq 2$ and that the result holds for every $\mathcal{G}r_{[\nu]}^{\mathrm{I\hspace{-.1em}I},m_{\nu}}$ with $m_{\nu}<m_{\mu}$.
If $e_2-e_3=m_{\mu}$, we have a canonical projection 
$\mathcal{G}r_{[\mu]}^{\mathrm{I\hspace{-.1em}I}}\rightarrow \mathcal{G}r_{[\mu']}^{\mathrm{I\hspace{-.1em}I}}$, where $[\mu']=[2e_2-e_3,e_2,e_3]$.
Then the inverse image of $\mathcal{G}r_{[\mu']}^{\mathrm{I\hspace{-.1em}I},m_{\mu'}}$ by this projection is $\mathcal{G}r_{[\mu]}^{\mathrm{I\hspace{-.1em}I},m_{\mu}}$.
To check this, let $P\in \mathcal{G}r_{[\mu]}^{\mathrm{I\hspace{-.1em}I}}$ and let $P_0$ be the unique vertex in $\mathcal{G}r_{[\mu']}^{\mathrm{I\hspace{-.1em}I}}$ which is also contained in $cl([\Lambda_{\bar k}], P)$.
Let $(C_0, \ldots, C_r)$ (resp.\ $(D_0, \ldots, D_s)$) be the special gallery connecting $[\Lambda_{\bar k}]$ to $P_0$ (resp.\ $P$), and let $a_1=2(e_2-e_3-1)$ (resp.\ $a_2=2(e_1-e_2-1)$).
Fix an apartment containing $cl([\Lambda_{\bar k}], P)$, and let $\rho_1$ (resp.\ $\rho_2$) be the retraction of $\mathcal B_{\infty}$ onto this apartment with center $C_r$ (resp.\ $D_s$).
\begin{center}
\begin{tikzpicture}
 \draw (3,0)--(4, 0)--(5,0)--cycle;
 \draw (-4.5,0.866)--(-3.5,0.866)--(-2.5,0.866)--(-1.5,0.866)--cycle;
 \draw (2.5,0.866)--(3.5,0.866)--(4.5,0.866)--cycle;
 \draw (3,0)--(2.5,0.866)--cycle;
 \draw (3,0)--(3.5,0.866)--cycle;
 \draw (4,0)--(3.5,0.866)--cycle;
 \draw (4,0)--(4.5,0.866)--cycle;
 \draw (5,0)--(4.5,0.866)--cycle;
 \draw (-2,1.732)--(-1,1.732)--(0,1.732)--(1,1.732)--(2,1.732)--(3,1.732)--(4,1.732)--cycle;
 \draw (-5.5,-0.866)--(-4.5,-0.866)--(-3.5,-0.866)--(-2.5,-0.866)--cycle;
 \draw (-2,0)--(-2.5,-0.866)--cycle;
 \draw (-3,0)--(-2.5,-0.866)--cycle;
 \draw (-3,0)--(-3.5,-0.866)--cycle;
 \draw (-4,0)--(-3.5,-0.866)--cycle;
 \draw (-4,0)--(-4.5,-0.866)--cycle;
 \draw (-5,0)--(-4.5,-0.866)--cycle;
 \draw (-5,0)--(-5.5,-0.866)--cycle;
 \draw (-5,0)--(-4.5,0.866)--cycle;
 \draw (-5,0)--(-4,0)--(-3,0)--(-2,0)--cycle;
 \draw (-4,0)--(-3.5,0.866)--cycle;
 \draw (-3,0)--(-3.5,0.866)--cycle;
 \draw (-3,0)--(-2.5,0.866)--cycle;
 \draw (-2,0)--(-2.5,0.866)--cycle;
 \draw (-2,0)--(-1.5,0.866)--cycle;
 \draw (-4,0)--(-4.5,0.866)--cycle;
 \draw (-2,1.732)--(-2.5,0.866)--cycle;
 \draw (-2,1.732)--(-1.5,0.866)--cycle;
 \draw (-1,1.732)--(-1.5,0.866)--cycle;
 \draw (2,1.732)--(2.5,0.866)--cycle;
 \draw (3,1.732)--(2.5,0.866)--cycle;
 \draw (3,1.732)--(3.5,0.866)--cycle;
 \draw (4,1.732)--(3.5,0.866)--cycle;
 \draw (4,1.732)--(4.5,0.866)--cycle;
 \draw (-1.5,2.598)--(-0.5,2.598)--(1.5,2.598)--(2.5,2.598)--(3.5,2.598)--cycle;
 \draw (-2,1.732)--(-1.5,2.598)--cycle;
 \draw (-1,1.732)--(-1.5,2.598)--cycle;
 \draw (-1,1.732)--(-0.5,2.598)--cycle;
 \draw (0,1.732)--(-0.5,2.598)--cycle;
 \draw (0,1.732)--(0.5,2.598)--cycle;
 \draw (1,1.732)--(0.5,2.598)--cycle;
 \draw (1,1.732)--(1.5,2.598)--cycle;
 \draw (2,1.732)--(1.5,2.598)--cycle;
 \draw (2,1.732)--(2.5,2.598)--cycle;
 \draw (3,1.732)--(2.5,2.598)--cycle;
 \draw (3,1.732)--(3.5,2.598)--cycle;
 \draw (4,1.732)--(3.5,2.598)--cycle;
 \draw (-1,2.4)node{$C_0$};
 \draw (-1.5,1.45)node{$C_{2i}$};
 \draw (-2,0.62)node{$C_{a_1}$};
 \draw (-2.5,-0.25)node{$D_{a_2}$};
 \draw (-4.51,0.25)node{$C_r$};
 \draw (-5,-0.5)node{$D_s$};
 \draw (0,2.4)node{$\sigma C_0$};
 \draw (2,2.43)node{\footnotesize $\sigma C_{a_1}$};
 \draw (1.45,1.4)node{ $\sigma C_{a_1+2i}$};
 \draw (5,1.4)node{ $\sigma D_{a_2+2i}$};
 \draw (3,2.43)node{\footnotesize $\sigma D_{a_2}$};
 \draw (3.5,0.25)node{$\sigma C_r$};
 \draw (4.5,0.25)node{$\sigma D_s$};
 \draw (-5,0)node[left]{$P_0$};
 \draw (-5.5,-0.866)node[left]{$P$};
 \draw (4,0)node[below]{$\sigma P_0$};
 \draw (5,0)node[below]{$\sigma P$};
 \draw[->, semithick] (2.1,1.4)--(2.5,1.4);
 \draw[->, semithick] (4.3,1.4)--(3.5,1.4);
\end{tikzpicture}
\end{center}
Recall that $C_{a_1+2i}$ (resp.\ $D_{a_2+2i}$) satisfies 
\begin{align*}
\text{$\sigma C_{a_1+2i}\subset cl(C_{2i}, \sigma C_{a_1+2i-1})$ for $1\le i<m_{\mu'}$}\\
\text{(resp.\ $\sigma D_{a_2+2i}\subset cl(D_{2i}, \sigma D_{a_2+2i-1})$ for $1\le i<m_{\mu}$)}
\end{align*}
 if and only if $\rho_1(\sigma C_{a_1+2i})$ (resp.\ $\rho_2(\sigma D_{a_2+2i})$) is on the same side as $C_r$ (resp.\ $D_s$) of the wall containing $\rho_1(\sigma C_{a_1+2i}\cap \sigma C_{a_1+2i-1})$ (resp.\ $\rho_2(\sigma D_{a_2+2i}\cap \sigma D_{a_2+2i-1})$) for $1\le i<m_{\mu'}$ (resp.\ $1\le i<m_{\mu}$).
Retracting minimal galleries $(C_{2i}=D_{2i},\ldots, \sigma C_{a_1+2i})$ and $(\sigma C_{a_1+2i}, \ldots, \sigma D_{a_2+2i})$ successively by $\rho_2$,
we can also check that $\sigma C_{a_1+2i}\subset cl(C_{2i}, \sigma C_{a_1+2i-1})$ for $1\le i<m_{\mu'}$ if and only if $\rho_2(\sigma D_{a_2+2i})$ is on the same side as $D_s$ of the wall containing $\rho_2(\sigma D_{a_2+2i}\cap \sigma D_{a_2+2i-1})$ for $1\le i<m_{\mu'}=m_{\mu}$ (see Lemma \ref{LEMMA1.2} and \cite[p.\ 340]{Kottwitz}).
This implies that the inverse image of $\mathcal{G}r_{[\mu']}^{\mathrm{I\hspace{-.1em}I},m_{\mu'}}$ by the projection $\mathcal{G}r_{[\mu]}^{\mathrm{I\hspace{-.1em}I}}\rightarrow \mathcal{G}r_{[\mu']}^{\mathrm{I\hspace{-.1em}I}}$ is $\mathcal{G}r_{[\mu]}^{\mathrm{I\hspace{-.1em}I},m_{\mu}}$.
Thus, by the proof of Lemma \ref{affinebundle}, this is equivalent to saying that we have an isomorphism $$\mathcal{G}r_{[\mu]}^{\mathrm{I\hspace{-.1em}I},m_{\mu}}\cong \mathcal{G}r_{[\mu']}^{\mathrm{I\hspace{-.1em}I},m_{\mu'}}\times \mathbb A^{2(e_1+e_3-2e_2)}$$
as a locally closed subvariety of $\mathcal{G}r_{[\mu]}$.
So it is enough to consider the case $e_1-e_2=m_{\mu}$.

In the sequel, we assume that $e_1-e_2=m_{\mu}$.
In this case, we have a morphism of $\bar k$-spaces
$$\mathcal{G}r_{[\mu']}^{\mathrm{I\hspace{-.1em}I},m_{\mu'}}\rightarrow X^S,$$
which actually factors through $\mathcal{G}r_{[\lambda]}$, where $\mu'=(e_1-1,e_2,e_3), \lambda=(e_1-e_3-1,-m_{\mu'},e_3-e_1+1+m_{\mu'})$.
Indeed, we have a morphism of $\bar k$-spaces
$$J^{\mu'}\rightarrow X^S,$$
given on $R$-valued points by sending $j\in J^{\mu'}(R)$ to the lattice $t^{-\mu'}j^{-1}\sigma(j)t^{\mu'}\Lambda_{R}\in X^S(R)$.
By Proposition \ref{invformula}, the composition $\mathcal{G}r_{[\mu']}^{\mathrm{I\hspace{-.1em}I},m_{\mu'}}\subset J^{\mu'}\rightarrow X^S$ actually factors through $\mathcal{G}r_{[\lambda]}$.
Moreover, the composition $\varphi$ of the morphism $\mathcal{G}r_{[\mu']}^{\mathrm{I\hspace{-.1em}I},m_{\mu'}}\rightarrow \mathcal{G}r_{[\lambda]}$ and the canonical projection $\mathcal{G}r_{[\lambda]}\rightarrow \mathcal{G}r_{[0,0,-1]}\cong \mathbb P^2$ is a morphism of varieties factoring through a locally closed immersion $\mathbb A^1\subset \mathbb P^2$.
To check this, let $P_0\in \mathcal{G}r_{[\mu']}^{\mathrm{I\hspace{-.1em}I},m_{\mu'}}$.
Then there exists a unique matrix $g_0\in J^{\mu'}(\bar k)$ such that $P_0=[g_0t^{\mu'}\Lambda_{\bar k}]$.
Let $(C_0,\ldots, C_r)$ be the special gallery connecting $[\Lambda_{\bar k}]$ to $P_0$, and let $\mathcal A_0$ be an apartment containing $C_r$ and $\sigma C_r$.
Then the image $\varphi(P_0)$ corresponds to a vertex in the apartment $t^{-\mu'}g_{0}^{-1}\mathcal A_0$, which contains $[\mathcal O\oplus \mathcal O\oplus t\mathcal O]$ and differs from $[\mathcal O\oplus t\mathcal O\oplus t\mathcal O]$.
So $\varphi$ is actually a regular function on $\mathcal{G}r_{[\mu']}^{\mathrm{I\hspace{-.1em}I},m_{\mu'}}$.
\begin{center}
\begin{tikzpicture}
 \draw (2,0)--(3,0)--cycle;
 \draw (-4.5,0.866)--(-3.5,0.866)--(-2.5,0.866)--(-1.5,0.866)--cycle;
 \draw (1.5,0.866)--(2.5,0.866)--cycle;
 \draw (2,0)--(1.5,0.866)--cycle;
 \draw (2,0)--(2.5,0.866)--cycle;
 \draw (3,0)--(2.5,0.866)--cycle;
 \draw (2.5,0.866)--(3.5,0.866)--cycle;
 \draw (3,0)--(3.5,0.866)--cycle;
 \draw (4,0)--(3.5,0.866)--cycle;
 \draw (3,0)--(4,0)--cycle;
 \draw (-4,1.732)--(-3,1.732)--(-2,1.732)--(-1,1.732)--(0,1.732)--(1,1.732)--(2,1.732)--cycle;
 \draw (-5,0)--(-4.5,0.866)--cycle;
 \draw (-5,0)--(-4,0)--cycle;
 \draw (-4,0)--(-3.5,0.866)--cycle;
 \draw (-4,0)--(-4.5,0.866)--cycle;
 \draw (-4,1.732)--(-4.5,0.866)--cycle;
 \draw (-4,1.732)--(-3.5,0.866)--cycle;
 \draw (-3,1.732)--(-3.5,0.866)--cycle;
 \draw (-3,1.732)--(-2.5,0.866)--cycle;
 \draw (-2,1.732)--(-2.5,0.866)--cycle;
 \draw (-2,1.732)--(-1.5,0.866)--cycle;
 \draw (-1,1.732)--(-1.5,0.866)--cycle;
 \draw (1,1.732)--(1.5,0.866)--cycle;
 \draw (2,1.732)--(1.5,0.866)--cycle;
 \draw (2,1.732)--(2.5,0.866)--cycle;
 \draw (-1.5,2.598)--(-0.5,2.598)--(1.5,2.598)--cycle;
 \draw (-2,1.732)--(-1.5,2.598)--cycle;
 \draw (-1,1.732)--(-1.5,2.598)--cycle;
 \draw (-1,1.732)--(-0.5,2.598)--cycle;
 \draw (0,1.732)--(-0.5,2.598)--cycle;
 \draw (0,1.732)--(0.5,2.598)--cycle;
 \draw (1,1.732)--(0.5,2.598)--cycle;
 \draw (1,1.732)--(1.5,2.598)--cycle;
 \draw (2,1.732)--(1.5,2.598)--cycle;
 \draw (-1,2.4)node{$C_0$};
 \draw (-1.5,1.4)node{$C_a$};
 \draw (-3.95,1.15)node{$C_r$};
 \draw (-4.51,0.25)node{$D$};
 \draw (-4,0.6)node{$D_0$};
 \draw (-4,0)node[below]{$P'$};
 \draw (0,2.4)node{$\sigma C_0$};
 \draw (1,2.4)node{$\sigma C_a$};
 \draw (2.5,0.25)node{$\sigma C_r$};
 \draw (3.5,0.25)node{$\sigma D$};
 \draw (-5,0)node[left]{$P$};
 \draw (4,0)node[below]{$\sigma P$};
 \draw (-4.5,0.866)node[left]{$P_0$};
 \draw (3,0)node[below]{$\sigma P_0$};
\end{tikzpicture}
\end{center}

We have a canonical projection $$\phi:\mathcal{G}r_{[\mu]}^{\mathrm{I\hspace{-.1em}I}}\rightarrow \mathcal{G}r_{[\mu']}^{\mathrm{I\hspace{-.1em}I}},$$ where $[\mu']=[e_1-1,e_2,e_3]$.
Using Lemma \ref{LEMMA1.2}, we can check that $\phi(\mathcal{G}r_{[\mu]}^{\mathrm{I\hspace{-.1em}I},m_{\mu}})\subset \mathcal{G}r_{[\mu']}^{\mathrm{I\hspace{-.1em}I},m_{\mu'}}$ (cf.\ \cite[p.\ 340]{Kottwitz}).
Moreover, by the proof of Lemma \ref{affinebundle}, we have $$\phi^{-1}(J^{\mu'})=J^{\mu}\cong J^{\mu'}\times U_{12, e_1-e_2-1}\times U_{13, e_1-e_3-1}.$$
This is equivalent to saying that any matrix in $J^{\mu}(\bar k)$ can be written as a product of a matrix in $J^{\mu'}(\bar k)$ and a matrix of the form
$$\begin{pmatrix}
1 & u_{12}t^{e_1-e_2-1} & u_{13}t^{e_1-e_3-1} \\
0 & 1 & 0 \\
0 & 0 & 1 \\
\end{pmatrix},$$
where $u_{12}, u_{13}\in \bar k$.
Let $P=[gt^{\mu}\Lambda_{\bar k}]\in \mathcal{G}r_{[\mu]}^{\mathrm{I\hspace{-.1em}I}}$ with $g\in J^{\mu}(\bar k)$, and let $D$ be the first chamber $cl(P,[\Lambda_{\bar k}])$.
Further, we assume that $P_0=[g_0t^{\mu'}\Lambda_{\bar k}]\in \mathcal{G}r_{[\mu']}^{\mathrm{I\hspace{-.1em}I},m_{\mu'}}$ with $g_0\in J^{\mu'}(\bar k)$ belongs to $cl([\Lambda_{\bar k}],P)$.
Let $(C_0,\ldots, C_r)$ be the special gallery connecting $[\Lambda_{\bar k}]$ to $P_0$, and let $(C_r, D_0, D)$ be the unique minimal gallery.
Fix an apartment $\mathcal A$ containing both $(\sigma C_0,\ldots, \sigma C_r)$ and $\sigma D$.
Then, using Lemma \ref{LEMMA1.2}, we can check that $P$ is contained in $\mathcal{G}r_{[\mu]}^{\mathrm{I\hspace{-.1em}I},m_{\mu}}$ if and only if $\rho_{\mathcal A, \sigma D}(D_0)$ is on the same side as $\sigma D$ of the wall containing $\rho_{\mathcal A, \sigma D}(C_r\cap D_0)$.
We can also check that this is equivalent to saying that $D_0\subset cl(C_r, \sigma C_r)$.
Set $D_0=\{P_0, P, P'\}$.
Then we have $P'=[g'\Lambda_{\bar k}]$, where
$$\text{$g'=g_0t^{\mu'}
\begin{pmatrix}
t & \varphi(P_0) & 0 \\
0 & 1 & 0 \\
0 & 0 & t \\
\end{pmatrix}$.}$$
This implies easily that $P$ is contained in $\mathcal{G}r_{[\mu]}^{\mathrm{I\hspace{-.1em}I},m_{\mu}}$ if and only if $g$ can be written as 
$$g=g_0t^{\mu'}
\begin{pmatrix}
t & \varphi(P_0) & u_{13} \\
0 & 1 & 0 \\
0 & 0 & 1 \\
\end{pmatrix}t^{-\mu}
=g_0
\begin{pmatrix}
1 & \varphi(P_0)t^{e_1-e_2-1} & u_{13}t^{e_1-e_3-1} \\
0 & 1 & 0 \\
0 & 0 & 1 \\
\end{pmatrix},$$
where $g\in \mathcal{G}r_{[\mu']}^{\mathrm{I\hspace{-.1em}I},m_{\mu'}}$ and $\varphi(P_0), u_{13}\in \bar k$.
Thus $\mathcal{G}r_{[\mu]}^{\mathrm{I\hspace{-.1em}I},m_{\mu}}$ is a closed subvariety of $\phi^{-1}(\mathcal{G}r_{[\mu']}^{\mathrm{I\hspace{-.1em}I},m_{\mu'}})\cong\mathcal{G}r_{[\mu']}^{\mathrm{I\hspace{-.1em}I},m_{\mu'}}\times U_{12, e_1-e_2-1}\times U_{13, e_1-e_3-1}$ defined by the equation $u_{12}=\varphi$.
By the induction hypothesis, this is isomorphic to 
$$\mathcal{G}r_{[\mu']}^{\mathrm{I\hspace{-.1em}I},m_{\mu'}}\times \mathbb A^1\cong \Omega\times\mathbb A^{2(e_1-e_3)-m_{\mu'}-2}\times \mathbb A^1\cong \Omega\times \mathbb A^{2(e_1-e_3)-m_{\mu}-2}.$$

Finally, we show the statement for $\mathcal{G}r_{[\mu]}^{\mathrm{I\hspace{-.1em}I},j}$.
Set 
$$G_{[\mu]}^{\mathrm{I\hspace{-.1em}I},j}=
\{P\in \mathcal{G}r_{[\mu]}^{\mathrm{I\hspace{-.1em}I}}\mid 
\text{$\sigma C_{a+2i}\subset cl(C_{2i}, \sigma C_{a+2i-1})$ for $1\le i< j$}\}.
$$
Then the similar proof as above shows that $G_{[\mu]}^{\mathrm{I\hspace{-.1em}I},j}$ is locally closed and its reduced subscheme structure is isomorphic to $\Omega\times \mathbb A^{2(e_1-e_3)-j-2}$.
Again, the similar argument as above shows that $\mathcal{G}r_{[\mu]}^{\mathrm{I\hspace{-.1em}I},j}$ is an open subvariety of $G_{[\mu]}^{\mathrm{I\hspace{-.1em}I},j}$ isomorphic to $\Omega\times \mathbb G_m\times \mathbb A^{2(e_1-e_3)-j-3}$, which completes the proof.
\end{proof}

We keep the notation in the proof of Lemma \ref{invfinite2}, and introduce the subsets of $\mathcal{G}r_{[\mu]}(Q)$ for any vertex $Q$ in $C_M$.
For any $[\mu]=[e_1,e_2,e_3]\in X_*(T)_+$, set
\begin{equation*}
\mathcal{G}r_{[\mu],C_M}^{u}(Q)=
\left \{P\in \mathcal{G}r_{[\mu]}(Q) \left|
\begin{array}{l}
\text{there exists a first chamber $C$ of $cl(Q,P)$}\\
\text{such that $C_M$ and $C$ have relative position $u$}
\end{array}
\right.\right\},
\end{equation*}
where $u=$ I\hspace{-.1em}I or I\hspace{-.1em}I\hspace{-.1em}I.
In case $e_1=e_2$ or $e_2=e_3$, then $P\in \mathcal{G}r_{[\mu]}(Q)$ belongs to $\mathcal{G}r_{[\mu]}^{u}(Q)$ if and only if the first ``vertex" of $cl(Q,P)$ is not contained in $C_M$.
In the case where $C_M$ and $C_0$ have relative position I, we will define two subvarieties.
For any $[\mu]=[e_1,e_2,e_3] \in X_*(T)_+$ with $e_1>e_2>e_3$, set
\begin{equation*}
\mathcal{G}r_{[\mu],D}^{u}(Q)=
\left \{P\in \mathcal{G}r_{[\mu]}(Q) \left|
\begin{array}{l}
\text{$C_0$ and $b_1\sigma D$ have relative position $u$, where}\\
\text{$C_0$ is the unique first chamber of $cl(Q,P)$}\\
\end{array}
\right.\right\},
\end{equation*}
where $u=$ I or I\hspace{-.1em}I, and similarly, set
\begin{equation*}
\mathcal{G}r_{[\mu],D'}^{u}(Q)=
\left \{P\in \mathcal{G}r_{[\mu]}(Q) \left|
\begin{array}{l}
\text{$C_0$ and $b_2\sigma D'$ have relative position $u$, where}\\
\text{$C_0$ is the unique first chamber of $cl(Q,P)$}\\
\end{array}
\right.\right\},
\end{equation*}
where $u=$ I or I\hspace{-.1em}I\hspace{-.1em}I.

Let $a=2(e_1-e_2-1)$.
For any $[\mu]=[e_1,e_2,e_3] \in X_*(T)_+$ with $e_1>e_2>e_3$, we also define the sets
\begin{equation*}
\mathcal{G}r_{[\mu],b_1}^{\mathrm{I\hspace{-.1em}I},j}(Q)=
\left \{P\in \mathcal{G}r_{[\mu],C_M}^{\mathrm{I\hspace{-.1em}I}}(Q) \left|
\begin{array}{l}
\text{$C_{a+2i}\subset cl(b_1\sigma C_{2(i-1)}, C_{a+2i-1})$, $1\le i<j$}\\
\text{and $C_{a+2j}\nsubseteq cl(b_1\sigma C_{2(j-1)}, C_{a+2j-1})$}\\
\end{array}
\right.\right\},
\end{equation*}
where $1\le j<m_{\mu, \mathrm{I\hspace{-.1em}I}}$, and
$$
\mathcal{G}r_{[\mu],b_1}^{\mathrm{I\hspace{-.1em}I},j}(Q)=
\{P\in \mathcal{G}r_{[\mu],C_M}^{\mathrm{I\hspace{-.1em}I}}(Q)\mid 
\text{$C_{a+2i}\subset cl(b_1\sigma C_{2(i-1)}, C_{a+2i-1})$, $1\le i< m_{\mu,\mathrm{I\hspace{-.1em}I}}$}\},
$$
where $j=m_{\mu, \mathrm{I\hspace{-.1em}I}}$.
These conditions appear when we connect the two galleries $(C_0, \ldots, C_r)$ and $(C_M,\ldots, b_1\sigma C_0,\ldots, b_1\sigma C_r)$.
In the case where $C_M$ and $C_0$ have relative position I, we define
\begin{equation*}
\mathcal{G}r_{[\mu],D}^{\mathrm{I\hspace{-.1em}I},j}(Q)=
\left \{P\in \mathcal{G}r_{[\mu],D}^{\mathrm{I\hspace{-.1em}I}}(Q) \left|
\begin{array}{l}
\text{$C_{2i}\subset cl(b_1\sigma C_{a+2i}, C_{2i-1})$, $1\le i<j$}\\
\text{and $C_{2j}\nsubseteq cl(b\sigma C_{a+2j}, C_{2j-1})$}\\
\end{array}
\right.\right\},
\end{equation*}
where $1\le j< m_{\mu,\mathrm{I}}$, and
$$
\mathcal{G}r_{[\mu],D}^{\mathrm{I\hspace{-.1em}I},j}(Q)=
\{P\in \mathcal{G}r_{[\mu],D}^{\mathrm{I\hspace{-.1em}I}}(Q)\mid 
\text{$C_{2i}\subset cl(b_1\sigma C_{a+2i}, C_{2i-1})$, $1\le i< m_{\mu,\mathrm{I}}$}\},
$$
where $j=m_{\mu,\mathrm{I}}$.
These conditions appear when we connect the two galleries $(C_0, \ldots, C_r)$ and $(b_1\sigma D, \ldots, b_1\sigma C_0, \ldots, b_1\sigma C_r)$.
Similarly, by writing down the conditions on a bend of galleries appearing in the proof of Lemma \ref{invfinite2}, we define 
\begin{align*}
&\mathcal{G}r_{[\mu],b_2}^{\mathrm{I\hspace{-.1em}I},j}(Q)\quad (1\le j\le m_{\mu,\mathrm{I\hspace{-.1em}I}}),\\
&\mathcal{G}r_{[\mu],b_1}^{\mathrm{I\hspace{-.1em}I\hspace{-.1em}I},j}(Q), \quad \mathcal{G}r_{[\mu],b_2}^{\mathrm{I\hspace{-.1em}I\hspace{-.1em}I},j}(Q)\quad (1\le j\le m_{\mu,\mathrm{I\hspace{-.1em}I\hspace{-.1em}I}}),\\
&\mathcal{G}r_{[\mu],D'}^{\mathrm{I\hspace{-.1em}I\hspace{-.1em}I},j}(Q)\quad (1\le j\le m_{\mu,\mathrm{I}}).
\end{align*}
The subsets of $\mathcal{G}r_{[\mu]}(Q)$ defined above can be seen as the locally closed reduced subvarieties of the variety $\mathcal{G}r_{[\mu]}(Q)$, and denoted by the same symbols (see the next proposition).

\begin{prop}
\label{geometricstructure2}
Let notation be as above.
Let $b=b_1$ or $b_2$.
\begin{enumerate}[(i)]
\item Let $[\mu]=[e_1,e_2,e_3] \in X_*(T)_+$ with $e_1=e_2>e_3$.
Then the $\bar k$-variety $\mathcal{G}r_{[\mu],C_M}^{\mathrm{I\hspace{-.1em}I}}(Q)$ (resp.\ $\mathcal{G}r_{[\mu],C_M}^{\mathrm{I\hspace{-.1em}I\hspace{-.1em}I}}(Q)$) is isomorphic to $\mathbb A^{2(e_1-e_3)-1}$ (resp.\ $\mathbb A^{2(e_1-e_3)}$).

\item Let $[\mu]=[e_1,e_2,e_3] \in X_*(T)_+$ with $e_1>e_2=e_3$.
Then the $\bar k$-variety $\mathcal{G}r_{[\mu],C_M}^{\mathrm{I\hspace{-.1em}I}}(Q)$ (resp.\ $\mathcal{G}r_{[\mu],C_M}^{\mathrm{I\hspace{-.1em}I\hspace{-.1em}I}}(Q)$) is isomorphic to $\mathbb A^{2(e_1-e_3)}$ (resp.\ $\mathbb A^{2(e_1-e_3)-1}$).

\item Let $[\mu]=[e_1,e_2,e_3] \in X_*(T)_+$ with $e_1>e_2>e_3$.
Then the $\bar k$-variety $\mathcal{G}r_{[\mu],C_M}^{\mathrm{I\hspace{-.1em}I}}(Q)$ (resp.\ $\mathcal{G}r_{[\mu],b}^{\mathrm{I\hspace{-.1em}I},j}(Q)$, resp.\ $\mathcal{G}r_{[\mu],b}^{\mathrm{I\hspace{-.1em}I}, m_{\mu,\mathrm{I\hspace{-.1em}I}}}(Q)$) is isomorphic to $\mathbb A^{2(e_1-e_3)-1}$ (resp.\ $\mathbb G_m\times\mathbb A^{2(e_1-e_3)-j-1}$, resp.\ $\mathbb A^{2(e_1-e_3)-m_{\mu,\mathrm{I\hspace{-.1em}I}}}$).

\item Let $[\mu]=[e_1,e_2,e_3] \in X_*(T)_+$ with $e_1>e_2>e_3$.
Then the $\bar k$-variety $\mathcal{G}r_{[\mu],C_M}^{\mathrm{I\hspace{-.1em}I\hspace{-.1em}I}}(Q)$ (resp.\ $\mathcal{G}r_{[\mu],b}^{\mathrm{I\hspace{-.1em}I\hspace{-.1em}I},j}(Q)$, resp.\ $\mathcal{G}r_{[\mu],b}^{\mathrm{I\hspace{-.1em}I\hspace{-.1em}I}, m_{\mu,\mathrm{I\hspace{-.1em}I\hspace{-.1em}I}}}(Q)$) is isomorphic to $\mathbb A^{2(e_1-e_3)-1}$ (resp.\ $\mathbb G_m\times\mathbb A^{2(e_1-e_3)-j-1}$, resp.\ $\mathbb A^{2(e_1-e_3)-m_{\mu,\mathrm{I\hspace{-.1em}I\hspace{-.1em}I}}}$).

\item Let $[\mu]=[e_1,e_2,e_3] \in X_*(T)_+$ with $e_1>e_2>e_3$.
Then the $\bar k$-variety $\mathcal{G}r_{[\mu],D}^{\mathrm{I}}(Q)$ (resp.\ $\mathcal{G}r_{[\mu],D}^{\mathrm{I\hspace{-.1em}I}}(Q)$, resp.\ $\mathcal{G}r_{[\mu],D}^{\mathrm{I\hspace{-.1em}I},j}(Q)$, resp.\ $\mathcal{G}r_{[\mu],D}^{\mathrm{I\hspace{-.1em}I},m_{\mu,\mathrm{I}}}(Q)$) is isomorphic to $\mathbb G_m\times \mathbb A^{2(e_1-e_3)-1}$ (resp.\ $\mathbb A^{2(e_1-e_3)-1}$, resp.\ $\mathbb G_m\times\mathbb A^{2(e_1-e_3)-j-1}$, resp.\ $\mathbb A^{2(e_1-e_3)-m_{\mu,\mathrm{I}}}$).

\item Let $[\mu]=[e_1,e_2,e_3] \in X_*(T)_+$ with $e_1>e_2>e_3$.
Then the $\bar k$-variety $\mathcal{G}r_{[\mu],D'}^{\mathrm{I}}(Q)$ (resp.\ $\mathcal{G}r_{[\mu],D'}^{\mathrm{I\hspace{-.1em}I\hspace{-.1em}I}}(Q)$, resp.\ $\mathcal{G}r_{[\mu],D'}^{\mathrm{I\hspace{-.1em}I\hspace{-.1em}I},j}(Q)$, resp.\ $\mathcal{G}r_{[\mu],D'}^{\mathrm{I\hspace{-.1em}I\hspace{-.1em}I},m_{\mu,\mathrm{I}}}(Q)$) is isomorphic to $\mathbb G_m\times \mathbb A^{2(e_1-e_3)-1}$ (resp.\ $\mathbb A^{2(e_1-e_3)-1}$, resp.\ $\mathbb G_m\times\mathbb A^{2(e_1-e_3)-j-1}$, resp.\ $\mathbb A^{2(e_1-e_3)-m_{\mu,\mathrm{I}}}$).
\end{enumerate}
\end{prop}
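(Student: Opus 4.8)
The plan is to run the same machinery as in the proof of Proposition~\ref{geometricstructure}, with the Frobenius $\sigma$ replaced by $b\sigma$ ($b=b_1$ or $b_2$) and the subcomplex $\mathcal B_1$ replaced by the single chamber $C_M$: as there it is enough to treat $Q=[\Lambda_{\bar k}]$, and one uses the affine bundles of Lemma~\ref{affinebundle} together with the retraction Lemma~\ref{LEMMA1.2} to convert the gallery conditions defining the various subsets into explicit equations on the matrix entries of elements of $J^{\mu}$. The essential new feature --- and the reason every answer is an affine space, or $\mathbb G_m$ times one, with no copy of $\Omega$ appearing --- is that all relative-position conditions occurring here are measured against the \emph{fixed} $\bar k$-rational chamber $C_M$ (or against $b\sigma C_M=C_M$, $b\sigma D$, $b\sigma D'$), rather than against the Frobenius-twisted chamber $\sigma C$ of Proposition~\ref{geometricstructure}. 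Hence the loci in $\mathop{\mathrm{Flag}}$ (resp.\ in $\mathbb P^2$) carved out by ``$C_M$ and $C$ have relative position $u$'' are ordinary Schubert cells of $\mathop{\mathrm{Flag}}$ (resp.\ of $\mathbb P^2$), hence affine spaces, and these play the role that $X_{\mathrm I}$, $X_{\mathrm{I\hspace{-.1em}I}}$, $X_{\mathrm{I\hspace{-.1em}I\hspace{-.1em}I}}\cong\Omega$ played before.

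For the degenerate cases (i) and (ii), $\varphi_{\mu}$ is the affine bundle $\mathcal{G}r_{[\mu]}\to\mathbb P^2$ of Lemma~\ref{affinebundle}(ii),(iii); the condition that the first vertex of $cl(Q,P)$ avoid $C_M$, together with the label $\mathrm{I\hspace{-.1em}I}$ or $\mathrm{I\hspace{-.1em}I\hspace{-.1em}I}$, cuts out of $\mathbb P^2$ the $\mathbb A^1$- or $\mathbb A^2$-Schubert cell relative to the rational point/line determined by $C_M$; pulling back along $\varphi_{\mu}$ and comparing ranks gives the stated affine spaces, exactly as part (i) of Proposition~\ref{geometricstructure} was deduced. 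For (iii) and (iv), $\varphi_{\mu}$ is the affine bundle $\mathcal{G}r_{[\mu]}\to\mathop{\mathrm{Flag}}$ of Lemma~\ref{affinebundle}(i); the locus of chambers in relative position $\mathrm{I\hspace{-.1em}I}$ (resp.\ $\mathrm{I\hspace{-.1em}I\hspace{-.1em}I}$) with $C_M$ is a two-dimensional Schubert cell $\cong\mathbb A^2$, which one checks lies in $(J^1\times J^{-1})\cap\mathop{\mathrm{Flag}}$, whence $\mathcal{G}r_{[\mu],C_M}^{\mathrm{I\hspace{-.1em}I}}(Q)=\varphi_{\mu}^{-1}(\mathbb A^2)\cong\mathbb A^{2(e_1-e_3)-1}$ sits inside $J^{\mu}$. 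The finer sets $\mathcal{G}r_{[\mu],b}^{u,j}(Q)$ are then handled by induction on $m_{\mu,u}$, just as for $\mathcal{G}r_{[\mu]}^{\mathrm{I\hspace{-.1em}I},j}$ and $\mathcal{G}r_{[\mu]}^{\mathrm{I\hspace{-.1em}I},m_{\mu}}$: a canonical projection $\mathcal{G}r_{[\mu],C_M}^{u}\to\mathcal{G}r_{[\mu'],C_M}^{u}$ peels off one pair of root subgroups, Lemma~\ref{LEMMA1.2} identifies the bend condition at the $i$-th step with the vanishing of one coordinate function, the closed conditions ``$\subset$'' for $i<j$ become affine-linear equations while the open condition ``$\nsubseteq$'' at $i=j$ deletes a single point (producing the $\mathbb G_m$ in $\mathbb G_m\times\mathbb A^{2(e_1-e_3)-j-1}$), and in the top case $j=m_{\mu,u}$ only the closed conditions survive, giving $\mathbb A^{2(e_1-e_3)-m_{\mu,u}}$.

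Cases (v) and (vi) are those in which $C_M$ and the first chamber $C_0$ of $cl(Q,P)$ already have relative position $\mathrm I$, so that in Lemma~\ref{invfinite2} the computation of $\mathrm{inv}'$ was routed through $b_1\sigma D$ (resp.\ $b_2\sigma D'$). Here I would note that the dichotomy ``$C_0$ and $b_1\sigma D$ have relative position $\mathrm I$ versus $\mathrm{I\hspace{-.1em}I}$'' is itself a stratification of the big Schubert cell sitting over $\{C\mid C,C_M\ \text{in position }\mathrm I\}$: the relative-position-$\mathrm I$ stratum $\mathcal{G}r_{[\mu],D}^{\mathrm I}(Q)$ is this cell with one further hyperplane-type divisor removed, hence $\cong\mathbb G_m\times\mathbb A^{2(e_1-e_3)-1}$, and the relative-position-$\mathrm{I\hspace{-.1em}I}$ stratum $\mathcal{G}r_{[\mu],D}^{\mathrm{I\hspace{-.1em}I}}(Q)$ is that divisor, an affine space $\mathbb A^{2(e_1-e_3)-1}$; its $j$-refinements are obtained by the same inductive bend analysis as in (iii)--(iv). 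The statement for $D'$ and $b_2$ is identical after interchanging the Coxeter-type positions $\mathrm{I\hspace{-.1em}I}$ and $\mathrm{I\hspace{-.1em}I\hspace{-.1em}I}$.

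The main obstacle I anticipate is purely combinatorial bookkeeping. In each case one must carry out, following the recipe recalled in Section~\ref{building}, the retraction of $(C_M,\dots,C_0,\dots,C_r)$ against $(b\sigma C_0,\dots,b\sigma C_r)$ (or the variants through $b\sigma D$, $b\sigma D'$), pin down precisely which edge can carry a bend, and verify that the ``mixed'' closure conditions such as $C_{a+2i}\subset cl(b_1\sigma C_{2(i-1)},C_{a+2i-1})$ translate --- via Lemma~\ref{LEMMA1.2} and the description of $J^{\mu}$ as a product of root subgroups --- into a single linear equation in the coordinates, so that each subvariety is cut out by exactly the expected number of equations and the $\mathbb G_m$ factors occur exactly where claimed. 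Once the dictionary ``a bend occurs at step $i\ \longleftrightarrow\ $ one explicit coordinate vanishes'' is in place, all dimension counts and the identifications with $\mathbb A^n$ and $\mathbb G_m\times\mathbb A^n$ follow formally, in parallel with the proof of Proposition~\ref{geometricstructure}.
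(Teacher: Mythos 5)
Your proposal is correct and follows essentially the same route as the paper: reduce to $Q=[\Lambda_{\bar k}]$, exploit the affine bundles of Lemma~\ref{affinebundle} and the retraction Lemma~\ref{LEMMA1.2}, observe that the relative-position loci against the fixed $\bar k$-rational chamber $C_M$ (or $b\sigma D$, $b\sigma D'$) are affine spaces rather than Deligne--Lusztig varieties, and handle the $j$-refined strata by the same inductive bend analysis as in Proposition~\ref{geometricstructure}. The only difference is presentational: where you invoke Schubert-cell language for the base loci, the paper writes down the explicit subsets $G_1\cong\mathbb A^2$, $G_1'\cong\mathbb A^1$ (for (i)--(iv)) and $F_{\mathrm I}\cong\mathbb G_m\times\mathbb A^2$, $F_{\mathrm{I\hspace{-.1em}I}}\cong\mathbb A^2$ (for (v)--(vi)) and checks them directly, but the substance is identical.
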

\begin{proof}
It suffices to prove the case for $Q=[\Lambda_{\bar k}]$.
We omit $[\Lambda_{\bar k}]$ from the notation.
Define
$$G_1=\{P\in \mathcal{G}r_{[1,0,0]}\mid \text{$\{[\Lambda_{\bar k}], P, [t\mathcal O\oplus t\mathcal O\oplus \mathcal O]\}$ is not a chamber}\}$$
and
\begin{equation*}
G_1'=
\left \{P\in \mathcal{G}r_{[1,0,0]} \left|
\begin{array}{l}
\text{$P\neq [t\mathcal O\oplus \mathcal O\oplus \mathcal O]$ and }\\
\text{$\{[\Lambda_{\bar k}], P, [t\mathcal O\oplus t\mathcal O\oplus \mathcal O]\}$ is a chamber}\\
\end{array}
\right.\right\}.
\end{equation*}
Then both $G_1$ and $G_1'$ are locally closed subsets, and we have 
$$G_1\cong \mathbb A^2,\quad G_1'\cong \mathbb A^1$$
as reduced $\bar k$-varieties.
These isomorphisms and the proof of Lemma \ref{affinebundle} imply (ii), and the proof for (i) is similar.

Let $C$ be a chamber containing $[\Lambda_{\bar k}]$ such that $C_M$ and $C$ have relative position I\hspace{-.1em}I.
Then $C$ is completely determined by the vertex $P_1$ of type $1$ in $C$.
Indeed, the vertex $P_2$ of type $2$ in $C$ belongs to $cl(C_M, P_1)$.
\begin{center}
\begin{tikzpicture}
 \draw (-2,0)--(2,0)--(1,-1.732)--(-1,-1.732)--cycle;
 \draw (0,0)--(-2,0)--cycle;
 \draw (0,0)--(2,0)--cycle;
 \draw (0,0)--(1,-1.732)--cycle;
 \draw (0,0)--(-1,-1.732)--cycle;
 \draw (0,0)node[above]{$[\Lambda_{\bar k}]$};
 \draw (1,-1.732)node[below]{$P_2$};
 \draw (2,0)node[above]{$P_1$};
 \draw (-1,-0.65)node{$C_M$};
 \draw (1,-0.65)node{$C$};
\end{tikzpicture}
\end{center}
Therefore, the set of such chambers (which can be identified with a subset of $\mathcal{G}r_1(\bar k)\times \mathcal{G}r_{-1}(\bar k)$) is isomorphic to $G_1$ and the proof of Lemma \ref{affinebundle} implies the statement for $\mathcal{G}r_{[\mu],C_M}^{\mathrm{I\hspace{-.1em}I}}$.
Moreover, by an argument similar to Proposition \ref{geometricstructure}, we can show the statements for $\mathcal{G}r_{[\mu],b}^{\mathrm{I\hspace{-.1em}I},j}$ and $\mathcal{G}r_{[\mu],b}^{m_{\mu,\mathrm{I\hspace{-.1em}I}}}$.
So we obtain (iii), and the proof for (iv) is the same.

Finally, we show (v), and (vi) follows similarly.
Define
\begin{equation*}
F_{\mathrm{I}}=
\left \{C_0\in \mathop{\mathrm{Flag}}(\bar k) \left|
\begin{array}{l}
\text{$C_M$ and $C_0$ have relative position I and }\\
\text{$C_0$ and $b_1\sigma D$ have relative position I}\\
\end{array}
\right.\right\}
\end{equation*}
and
\begin{equation*}
F_{\mathrm{I\hspace{-.1em}I}}=
\left \{C_0\in \mathop{\mathrm{Flag}}(\bar k) \left|
\begin{array}{l}
\text{$C_M$ and $C_0$ have relative position I and }\\
\text{$C_0$ and $b_1\sigma D$ have relative position I\hspace{-.1em}I}\\
\end{array}
\right.\right\}.
\end{equation*}
Using the techniques explained above, we can show that
$$F_{\mathrm{I}}\cong \mathbb G_m\times \mathbb A^2,\quad F_{\mathrm{I\hspace{-.1em}I}}\cong \mathbb A^2$$
as reduced $\bar k$-varieties.
These isomorphisms show the assertion in the same way as above.
\end{proof}

\section{Geometric Structure of Affine Deligne-Lusztig Varieties}
Let $\lambda\in X_*(T)_+$ with $X_{\lambda}(1)\neq \emptyset$.
Recall that $\mathcal P_{\lambda}(1)$ can be seen as the set
\begin{equation*}
\left \{(Q, [\mu]) \left|
\begin{array}{l}
\text{$[\mu]=[e_1, e_2, e_3]\in M_{\lambda}(1)$ and}\\
\text{$Q$ is a vertex of type $-(e_1+e_2+e_3)\in \mathbb Z/3$}\\
\end{array}
\right.\right\}
\end{equation*}
(Section \ref{b=1}).
For any such $\lambda$, we have explicit description of $M_{\lambda}(1)$ (Lemma \ref{invfinite}) and $H_1$ acts on $\mathcal P_{\lambda}(1)$ by left multiplication on the vertex $Q$.
Set $K_0=\mathrm{GL}_3(\mathcal O_F)$, $\Lambda_0=\Lambda_k$ and 
\begin{align*}
K_1 &=
\begin{pmatrix}
t & 0 & 0 \\
0 & 1 & 0 \\
0 & 0 & 1 \\
\end{pmatrix}
\mathrm{GL}_3(\mathcal O_F)
\begin{pmatrix}
t^{-1} & 0 & 0 \\
0 & 1 & 0 \\
0 & 0 & 1 \\
\end{pmatrix},\quad \Lambda_1=t\mathcal O\oplus \mathcal O\oplus \mathcal O,\\
K_2 &=
\begin{pmatrix}
t & 0 & 0 \\
0 & t & 0 \\
0 & 0 & 1 \\
\end{pmatrix}
\mathrm{GL}_3(\mathcal O_F)
\begin{pmatrix}
t^{-1} & 0 & 0 \\
0 & t^{-1} & 0 \\
0 & 0 & 1 \\
\end{pmatrix},\quad \Lambda_2=t\mathcal O\oplus t\mathcal O\oplus \mathcal O.
\end{align*}
Let $[\mu]=[e_1, e_2, e_3]\in M_{\lambda}(1)$, and let $i\in \{0, 1, 2\}$ be the representative of $-(e_1+e_2+e_3)\in \mathbb Z/3$.
Note that these groups are subgroups of $H_1\subset J_1=\mathrm{GL}_3(F)$, and $K_i$ is the stabilizer of $([\Lambda_i], [\mu])$ with respect to the action of $H_1$ on $\mathcal{P}_{\lambda}(1)$.
Finally, we define $K_{[\mu]}$ as $K_i$ and set $m_{\mu}=\min\{e_1-e_2, e_2-e_3\}$.

\begin{theo}
\label{ADLVgeometricstructure}
Let notation be as above.
Then the irreducible components of $X_{\lambda}(1)$ are parameterized by the elements in $\bigsqcup_{[\mu]\in M_{\lambda}(1)}J_1/K_{[\mu]}$, and $J_1$ acts on the set of irreducible components by left multiplication on this set.
Moreover, their geometric structures of $\bar k$-varieties are given as follows:
\begin{enumerate}[(i)]
\item Let $\lambda=(m_1, 0, m_3)\in X_*(T)_+$ with $X_{\lambda}(1)\neq \emptyset$.
For any $[\mu]=[e_1, e_2, e_3]\in M_{\lambda}(1)$ with $e_1=e_2>e_3$ or $e_1>e_2=e_3$ (resp.\ $e_1>e_2>e_3$), the irreducible component corresponding to $gK_{[\mu]}\in J_1/K_{[\mu]}$ is an affine bundle of rank $m_1-m_3-2$ (resp.\ $m_1-m_3-3$) over $\mathbb P^2\setminus \mathbb P^2(k)$ (resp.\ $X_{\mathrm{I}}$).
If $[\mu]=[0,0,0]\in M_{\lambda}(1)$ (and hence $\lambda=(0,0,0)$), then the irreducible component corresponding to $gK_{[\mu]}\in J_1/K_{[\mu]}$ is a point.

\item Let $\lambda=(m_1, m_2, m_3)\in X_*(T)_+, m_2<0$ with $X_{\lambda}(1)\neq \emptyset$.
For any $[\mu]=[e_1, e_2, e_3]\in M_{\lambda}(1)$ with $m_{\mu}=-m_2$ (resp.\  $m_{\mu}>-m_2$),
the irreducible component corresponding to $gK_{[\mu]}\in J_1/K_{[\mu]}$ is isomorphic to 
$$\text{$\Omega \times \mathbb A^{m_1-m_3-2}$ (resp.\ $\Omega\times \mathbb G_m\times \mathbb A^{m_1-m_3-3}$)}.$$
In this case, the irreducible components are pairwise disjoint.

\item Let $\lambda=(m_1, m_2, m_3)\in X_*(T)_+, m_2>0$ with $X_{\lambda}(1)\neq \emptyset$.
For any $[\mu]=[e_1, e_2, e_3]\in M_{\lambda}(1)$ with $m_{\mu}=m_2$ (resp.\  $m_{\mu}>m_2$),
the irreducible component corresponding to $gK_{[\mu]}\in J_1/K_{[\mu]}$ is isomorphic to 
$$\text{$\Omega \times \mathbb A^{m_1-m_3-2}$ (resp.\ $\Omega\times \mathbb G_m\times \mathbb A^{m_1-m_3-3}$)}.$$
In this case, the irreducible components are pairwise disjoint.
\end{enumerate}
\end{theo}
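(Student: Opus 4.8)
The plan is to combine the decomposition results of Section \ref{ADLVset} with the explicit geometric descriptions of Section \ref{subvarieties}, using the dimension formula quoted in the introduction to identify closed pieces as irreducible components. First I would fix $\lambda$ and unwind the identifications: by Proposition \ref{ADLVdecomposition} it suffices to treat $X_\lambda(1)\cap\eta^{-1}(0)=X^S_\lambda(1)$, and by Proposition \ref{invdecomposition} this set is the union over $(Q,[\mu])\in\mathcal P_\lambda(1)$ of the pieces $X^S_\lambda(1)(\bar k)\cap G^{\mathcal B_1}_{[\mu]}(Q)$. By Lemma \ref{invclosed} each such piece is closed in $X^S_\lambda(1)$ and equals $X^S_\lambda(1)(\bar k)\cap\mathcal{G}r_{[\mu]}(Q)$, so under the identification $\mathcal{G}r_{[\mu]}(Q)\cong\mathcal{G}r_\mu$ of Section \ref{theschubertcells} it becomes one of the subvarieties $\mathcal{G}r^{\mathrm I}_{[\mu]}(Q)$, $\mathcal{G}r^{u}_{[\mu]}(Q)$ or $\mathcal{G}r^{u,j}_{[\mu]}(Q)$ ($u=\mathrm{II},\mathrm{III}$) of Section \ref{subvarieties}. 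Which one occurs is dictated by the shape of $[\mu]=[e_1,e_2,e_3]$ and of $\lambda=(m_1,m_2,m_3)$ via the computation in the proof of Lemma \ref{invfinite}: when $m_2=0$ the piece is $\mathcal{G}r^{\mathrm I}_{[\mu]}(Q)$; when $m_2<0$ it is $\mathcal{G}r^{\mathrm{II},j}_{[\mu]}(Q)$ with $j=-m_2$ forced, which is the top case $j=m_\mu$ exactly when $m_\mu=-m_2$; when $m_2>0$ it is the analogous $\mathrm{III}$-piece with $j=m_2$. Feeding these into Proposition \ref{geometricstructure} gives precisely the stated geometric structures, after substituting $e_1-e_3=m_1-m_3$ (resp.\ $e_1-e_3-1$ offsets absorbed into the indices) — this is the routine bookkeeping that matches $2(e_1-e_3)-3$, $2(e_1-e_3)-j-3$, $2(e_1-e_3)-m_\mu-2$ against $m_1-m_3-3$, etc.

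Second, I would show these closed pieces are genuinely the irreducible components. Each piece is irreducible because Proposition \ref{geometricstructure} exhibits it as an affine bundle over $X_{\mathrm I}$, over $\mathbb P^2\setminus\mathbb P^2(k)$, or over $\Omega$ (times $\mathbb G_m$ or $\mathbb A^N$), all of which are irreducible. To see they are the components one checks the dimension: the dimension of each piece, read off from Proposition \ref{geometricstructure}, equals $\langle\rho,\lambda-\nu_b\rangle-\tfrac12\mathrm{def}(b)=\langle\rho,\lambda\rangle$ (for $b=1$), the dimension of $X_\lambda(1)$ from the formula in the introduction; since a finite union of irreducible closed subsets each of the ambient dimension can have no other components and the maximal ones among them are exactly the irreducible components, and here all pieces have the same (maximal) dimension, each piece is an irreducible component. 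For the exceptional case $\lambda=0$ everything is a point. One must also verify no piece is contained in another: this follows because distinct $(Q,[\mu])$ give subsets with distinct "first vertex/chamber'' data in the building, or more simply from Lemma \ref{invclosed}, which shows the $G^{\mathcal B_1}_{[\mu]}(Q)$ are pinned down by the minimal-distance vertex $Q$ and the relative position $[\mu]$, hence are pairwise distinct maximal pieces.

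Third, I would identify the index set and the $J_1$-action. The set $\mathcal P_\lambda(1)$ is, by the discussion preceding Theorem \ref{ADLVgeometricstructure}, in bijection with $\bigsqcup_{[\mu]\in M_\lambda(1)}\{$vertices of type $-(e_1+e_2+e_3)$ in $\mathcal B_1\}$, and $H_1$ (hence $J_1$, compatibly with Proposition \ref{ADLVdecomposition} which produces the remaining $J_1/H_1\cong\mathbb Z$ worth of copies) acts transitively on vertices of a fixed type with stabilizer $K_{[\mu]}$ of the base vertex $[\Lambda_i]$; thus $\mathcal P_\lambda(1)$ and therefore the set of components is $\bigsqcup_{[\mu]\in M_\lambda(1)}J_1/K_{[\mu]}$, with $J_1$ acting by left multiplication, as claimed. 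The disjointness assertions in (ii) and (iii) I would get from the $\mathrm{II}$/$\mathrm{III}$ analogue of Lemma \ref{invclosed}: since in those cases $m_\mu$ is constrained and every point of $X^S_\lambda(1)$ has a \emph{unique} minimal-distance vertex $Q$ in $\mathcal B_1$ together with a uniquely determined first chamber, the pieces actually partition the set rather than merely cover it — I would spell out that $\mathcal{G}r^u_{[\mu]}(Q)$ for $e_1>e_2>e_3$ uses "the unique first chamber'', so membership is exclusive.

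\textbf{Main obstacle.} The delicate point is not the geometry of the individual pieces (that is Proposition \ref{geometricstructure}) but the exhaustiveness-with-exclusivity: showing that the decomposition in Proposition \ref{invdecomposition} is, in cases (ii) and (iii), a genuine disjoint decomposition into components and, in case (i), that the overlaps among the $\mathcal{G}r^{\mathrm I}_{[\mu]}(Q)$ are lower-dimensional so they still furnish the full list of components. This requires arguing carefully in the building that the "first vertex not in $\mathcal B_1$'' (resp.\ first chamber) attached to $P\in X^S_\lambda(1)$ along a minimal gallery to $\mathcal B_1$ is well-defined and determines $(Q,[\mu])$, which is where the retraction lemmas \ref{LEMMA1.1} and \ref{LEMMA1.2} and the bend analysis of Section \ref{building} do the real work; once that combinatorial uniqueness is in hand, the dimension count and the transitive $J_1$-action close the argument.
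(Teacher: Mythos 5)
Your overall plan tracks the paper's own argument closely: decompose $X_\lambda(1)\cap\eta^{-1}(0)$ via Proposition~\ref{invdecomposition}, identify each closed piece $X^S_\lambda(1)(\bar k)\cap G^{\mathcal B_1}_{[\mu]}(Q)$ as one of $\mathcal{G}r^{\mathrm I}_{[\mu]}(Q)$, $\mathcal{G}r^{\mathrm{II},-m_2}_{[\mu]}(Q)$ or $\mathcal{G}r^{\mathrm{III},m_2}_{[\mu]}(Q)$ via Proposition~\ref{invformula}, feed into Proposition~\ref{geometricstructure}, match dimensions against $\dim X_\lambda(1)=m_1-m_3$, and parameterize by the $H_1$-action with stabilizers $K_{[\mu]}$. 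All of that is what the paper does, and the disjointness claim for cases (ii) and (iii) via the ``unique first chamber / unique nearest vertex in $\mathcal B_1$'' observation is the correct mechanism.

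However, there is a genuine gap in your exhaustiveness step. You assert that ``a finite union of irreducible closed subsets each of the ambient dimension can have no other components'' and conclude that each piece is an irreducible component. But $\mathcal P_\lambda(1)$ is \emph{infinite} (the paper remarks on this explicitly, contrasting with the superbasic case where $\mathcal P_\lambda(b_i)$ is finite). With an infinite covering family you cannot immediately conclude that every irreducible component is one of the pieces, nor that the pieces you wrote down exhaust the components. The paper closes this gap by invoking that an irreducible component $V$ of $X^S_\lambda(1)$ is \emph{quasi-compact} (\cite[Corollary 6.5]{HV}), so that the function $P\mapsto\mathrm{dist}(P,[\Lambda_{\bar k}])$ is bounded on $V$ (using \cite[Lemma 2.4]{Gortz}), forcing $V(\bar k)\cap G^{\mathcal B_1}_{[\mu]}(Q)=\emptyset$ for all but finitely many $(Q,[\mu])$; hence $V$ is covered by finitely many of the pieces, and since those are irreducible closed of maximal dimension, $V$ must equal one of them. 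This finiteness/quasi-compactness control is not optional --- it is what converts the infinite, a priori unmanageable covering into the desired classification --- and your proposal as written does not supply it, nor does it flag the issue as something to resolve.
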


\begin{proof}
Fix $\lambda=(m_1, m_2, m_3)\in X_*(T)_+$ with $X_{\lambda}(1)\neq \emptyset$.
The case where $\lambda=(0,0,0)$ is well-known.
So we may assume $\lambda\neq (0,0,0)$.
First note that we have a decomposition
$$X_{\lambda}^S(1)(\bar k)=\bigcup_{(Q,[\mu])\in\mathcal P_{\lambda}(1)}(X_{\lambda}^S(1)(\bar k)\cap G_{[\mu]}^{\mathcal B_1}(Q))$$
by Proposition \ref{invdecomposition}.
We show that for any $(Q,[\mu])\in\mathcal P_{\lambda}(1)$, the subset $X_{\lambda}^S(1)(\bar k)\cap G_{[\mu]}^{\mathcal B_1}(Q)$ is an irreducible component of $X_{\lambda}(1)(\bar k)$.
By Proposition \ref{invformula}, we have
\[
  X_{\lambda}^S(1)(\bar k)\cap G_{[\mu]}^{\mathcal B_1}(Q)=
  \begin{cases}
    \mathcal{G}r_{[\mu]}^{\mathrm{I}}(Q) & (m_2=0) \\
    \mathcal{G}r_{[\mu]}^{\mathrm{I\hspace{-.1em}I},-m_2}(Q) & (m_2<0) \\
    \mathcal{G}r_{[\mu]}^{\mathrm{I\hspace{-.1em}I\hspace{-.1em}I},m_2}(Q) & (m_2>0). \\
  \end{cases}
\]
So, by Lemma \ref{invclosed} and Proposition \ref{geometricstructure}, it is an irreducible closed subset of dimension $m_1-m_3$, which is also the dimension of $X_{\lambda}(1)$ (see for example \cite[Theorem 4.17]{Gortz}).
Thus, using Proposition \ref{ADLVdecomposition}, $X_{\lambda}^S(1)(\bar k)\cap G_{[\mu]}^{\mathcal B_1}(Q)$ is an irreducible component of $X_{\lambda}(1)(\bar k)$.

Since the action of $H_1$ on $\mathcal P_{\lambda}(1)$ induces $H_1\backslash \mathcal P_{\lambda}(1)\cong M_{\lambda}(1)$, we have
$$X_{\lambda}(1)\cong \bigsqcup_{J_1/H_1}\bigcup_{\bigsqcup_{[\mu]\in M_{\lambda}(1)} H_1/K_{[\mu]}}\mathcal{G}r_{[\mu]}^{u, j}(Q)= \bigcup_{\bigsqcup_{[\mu]\in M_{\lambda}(1)} J_1/K_{[\mu]}}\mathcal{G}r_{[\mu]}^{u, j}(Q),$$
where $J_1$ acts on the set of the irreducible components by left multiplication on the index set.
Again by Proposition \ref{geometricstructure}, the geometric structure of each irreducible component is given as above.

Let $V$ be an irreducible component of $X_{\lambda}^S(1)$.
Then $V$ is quasi-compact (cf.\ \cite[Corollary 6.5]{HV}), and hence
$$V(\bar k)\cap G_{[\mu]}^{\mathcal B_1}(Q)=\emptyset$$
for all but finitely many $(Q,[\mu])\in\mathcal P_{\lambda}(1)$.
Indeed, using \cite[Lemma 2.4]{Gortz}, we can show that the function $V(\bar k)\rightarrow \mathbb N$ which maps $P\in V(\bar k)$ to the distance from $P$ to $[\Lambda_{\bar k}]$ is bounded.
Since $V$ is an irreducible component, we have $V=X_{\lambda}^S(1)(\bar k)\cap G_{[\mu]}^{\mathcal B_1}(Q)$ for some $(Q,[\mu])\in\mathcal P_{\lambda}(1)$.
On the other hand, by the similar argument as below, we can easily show that 
$$X_{\lambda}^S(1)(\bar k)\cap G_{[\mu]}^{\mathcal B_1}(Q)\neq X_{\lambda}^S(1)(\bar k)\cap G_{[\mu']}^{\mathcal B_1}(Q')$$
unless $(Q, [\mu])=(Q', [\mu'])$.
So all of the irreducible components of $X_{\lambda}(1)$ are parameterized by the elements in $\bigsqcup_{[\mu]\in M_{\lambda}(1)}J_1/K_{[\mu]}$.

Finally, let us consider the cases (ii) and (iii).
Let $(Q, [\mu]), (Q', [\mu'])\in \mathcal P_{\lambda}(1)$, then we have $$X_{\lambda}^S(1)(\bar k)\cap G_{[\mu]}^{\mathcal B_1}(Q)\cap G_{[\mu']}^{\mathcal B_1}(Q')=\emptyset$$ unless $(Q, [\mu])=(Q', [\mu'])$.
To show this, recall that we can identify $\mathop{\mathrm{Flag}}(\bar k)$ with the set of chambers containing $[\Lambda_{\bar k}]$ (Section \ref{subvarieties}).
If $C$ is a chamber in $X_{\mathrm{I\hspace{-.1em}I}}\cup X_{\mathrm{I\hspace{-.1em}I\hspace{-.1em}I}}$ and if $D$ is a chamber in $\mathcal B_1$ containing $[\Lambda_{\bar k}]$, then $C$ and $D$ always have relative position I.
This can be checked as in the first part of the proof of Proposition \ref{geometricstructure}.
Thus, by Proposition \ref{invformula}, the vertex $Q$ is the unique nearest one in $\mathcal B_1$ to any vertex in $X_{\lambda}^S(1)(\bar k)\cap G_{[\mu]}^{\mathcal B_1}(Q)$.
This completes the proof.
\end{proof}

\begin{rema}
In the case (i) of Theorem \ref{ADLVgeometricstructure}, the irreducible components are not disjoint in general.
For example, there exists a vertex $P$ of type $0$ in $\mathcal B_{\infty}$ such that $\{P, [\Lambda_1], [\Lambda_2]\}$ is a chamber and $P$ is not a vertex in $\mathcal B_1$.
Then $P$ belongs to both $\mathcal{G}r_{[0,0,-1]}^{\mathrm{I}}([\Lambda_1])$ and $\mathcal{G}r_{[1,0,0]}^{\mathrm{I}}([\Lambda_2])$, which are irreducible components of $X_{(1,0,-1)}(1)$.
On the other hand, the disjoint decomposition in (ii) and (iii) is an example of $J$-stratification introduced by Chen and Viehmann in \cite{CV}.
\end{rema}

Next, let us consider the superbasic case.
In this case, we need some notation in addition to those in Section \ref{superbasic}.
Let $\lambda=(m_1, m_2, m_3)\in X_*(T)_+$ with $X_{\lambda}(b_i)\neq \emptyset$ ($i=1, 2$).
We define
\[
  M_{\lambda}(b_1)'=
  \begin{cases}
    M_{\lambda}(b_1) & (m_2\le0) \\
    \mathbb \{[\mu]\in M_{\lambda}(b_1)\mid m_{\mu, \mathrm{I\hspace{-.1em}I}}\geq m_2\} & (m_2>0), \\
  \end{cases}
\]
and similarly,
\[
  M_{\lambda}(b_2)'=
  \begin{cases}
    M_{\lambda}(b_2) & (m_2\geq 1) \\
    \mathbb \{[\mu]\in M_{\lambda}(b_1)\mid m_{\mu, \mathrm{I\hspace{-.1em}I\hspace{-.1em}I}}\geq -m_2+1\} & (m_2<1), \\
  \end{cases}
\]
where $m_{\mu,\mathrm{I}}=\min\{e_1-e_2,e_2-e_3\}, m_{\mu,\mathrm{I\hspace{-.1em}I}}=\min\{e_1-e_2+1,e_2-e_3\}, m_{\mu,\mathrm{I\hspace{-.1em}I\hspace{-.1em}I}}=\min\{e_1-e_2,e_2-e_3+1\}$ for any $[\mu]=[e_1, e_2, e_3]\in X_*(T)_+'$.
Finally, note that $H_{b_i}$ stabilizes $[\Lambda_0], [\Lambda_1]$ and $[\Lambda_2]$ under the action on $\mathcal B_{\infty}$.
In fact, for any $g\in H_{b_i}$, the equation $v_{L}(\det(g))=0$ implies that
$$g\in\begin{pmatrix}
\mathcal O^{\times} & (t) & (t)\\
\mathcal O & \mathcal O^{\times} & (t) \\
\mathcal O & \mathcal O & \mathcal O^{\times} \\
\end{pmatrix}.$$

\begin{theo}
\label{ADLVgeometricstructure2}
Let notation be as above.
Then the irreducible components of $X_{\lambda}(b_i)$ ($i=1, 2$) are parameterized by the elements in $(J_{b_i}/H_{b_i})\times M_{\lambda}(b_i)'$, and $J_{b_i}$ acts on the set of irreducible components by left multiplication on this set.
Moreover, their geometric structures of $\bar k$-varieties are given as follows:
\begin{enumerate}[(i)]
\item Let $\lambda=(m_1, 0, m_3)\in X_*(T)_+$ with $X_{\lambda}(b_1)\neq \emptyset$.
For any $[\mu]=[e_1, e_2, e_3]\in M_{\lambda}(b_1)$ with $e_1=e_2>e_3$ or $e_1>e_2=e_3$ (resp.\ $e_1>e_2>e_3$), the irreducible component corresponding to $(gH_{b_1},[\mu])\in (J_{b_1}/H_{b_1})\times M_{\lambda}(b_1)$ is isomorphic to $$\text{$\mathbb A^{m_1-m_3-1}$ (resp.\ $\mathbb G_m\times \mathbb A^{m_1-m_3-2}$)}.$$
If $[\mu]=[0,0,0]$ (and hence $\lambda=(1,0,0)$), then the irreducible component corresponding to $(gH_{b_1},[\mu])\in (J_{b_1}/H_{b_1})\times M_{\lambda}(b_1)$ is a point.

\item Let $\lambda=(m_1, m_2, m_3)\in X_*(T)_+, m_2<0$ with $X_{\lambda}(b_1)\neq \emptyset$.
For any $[\mu]=[e_1, e_2, e_3]\in M_{\lambda}(b_1)$ with $m_{\mu, \mathrm{I}}=-m_2$ (resp.\  $m_{\mu, \mathrm{I}}>-m_2$),
the irreducible component corresponding to $(gH_{b_1},[\mu])\in (J_{b_1}/H_{b_1})\times M_{\lambda}(b_1)$ is isomorphic to 
$$\text{$\mathbb A^{m_1-m_3-1}$ (resp.\ $\mathbb G_m\times \mathbb A^{m_1-m_3-2}$)}.$$

\item Let $\lambda=(m_1, m_2, m_3)\in X_*(T)_+, m_2>0$ with $X_{\lambda}(b_1)\neq \emptyset$.
For any $[\mu]=[e_1, e_2, e_3]\in M_{\lambda}(b_1)'$ with $m_{\mu, \mathrm{I\hspace{-.1em}I}}=m_2$ (resp.\  $m_{\mu, \mathrm{I\hspace{-.1em}I}}>m_2$),
the irreducible component corresponding to $(gH_{b_1}, [\mu])\in (J_{b_1}/H_{b_1})\times M_{\lambda}(b_1)'$ is isomorphic to 
$$\text{$\mathbb A^{m_1-m_3-1}$ (resp.\ $\mathbb G_m\times \mathbb A^{m_1-m_3-2}$)}.$$

\item Let $\lambda=(m_1, 1, m_3)\in X_*(T)_+$ with $X_{\lambda}(b_2)\neq \emptyset$.
For any $[\mu]=[e_1, e_2, e_3]\in M_{\lambda}(b_2)$ with $e_1=e_2>e_3$ or $e_1>e_2=e_3$ (resp.\ $e_1>e_2>e_3$), the irreducible component corresponding to $(gH_{b_2},[\mu])\in (J_{b_2}/H_{b_2})\times M_{\lambda}(b_2)$ is isomorphic to $$\text{$\mathbb A^{m_1-m_3-1}$ (resp.\ $\mathbb G_m\times \mathbb A^{m_1-m_3-2}$)}.$$
If $[\mu]=[0,0,0]$ (and hence $\lambda=(1,1,0)$), then the irreducible component corresponding to $(gH_{b_2},[\mu])\in (J_{b_2}/H_{b_2})\times M_{\lambda}(b_2)$ is a point.

\item Let $\lambda=(m_1, m_2, m_3)\in X_*(T)_+, m_2<1$ with $X_{\lambda}(b_2)\neq \emptyset$.
For any $[\mu]=[e_1, e_2, e_3]\in M_{\lambda}(b_2)'$ with $m_{\mu, \mathrm{I\hspace{-.1em}I\hspace{-.1em}I}}=-m_2+1$ (resp.\  $m_{\mu, \mathrm{I\hspace{-.1em}I\hspace{-.1em}I}}>-m_2+1$),
the irreducible component corresponding to $(gH_{b_2}, [\mu])\in (J_{b_2}/H_{b_2})\times M_{\lambda}(b_2)'$ is isomorphic to 
$$\text{$\mathbb A^{m_1-m_3-1}$ (resp.\ $\mathbb G_m\times \mathbb A^{m_1-m_3-2}$)}.$$

\item Let $\lambda=(m_1, m_2, m_3)\in X_*(T)_+, m_2>1$ with $X_{\lambda}(b_2)\neq \emptyset$.
For any $[\mu]=[e_1, e_2, e_3]\in M_{\lambda}(b_2)$ with $m_{\mu, \mathrm{I}}=m_2-1$ (resp.\  $m_{\mu, \mathrm{I}}>m_2-1$),
the irreducible component corresponding to $(gH_{b_2},[\mu])\in (J_{b_2}/H_{b_2})\times M_{\lambda}(b_2)$ is isomorphic to 
$$\text{$\mathbb A^{m_1-m_3-1}$ (resp.\ $\mathbb G_m\times \mathbb A^{m_1-m_3-2}$)}.$$
\end{enumerate}
In all cases, the irreducible components are pairwise disjoint.
\end{theo}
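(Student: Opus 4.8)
The plan is to rerun the machine from the proof of Theorem~\ref{ADLVgeometricstructure}, the one structural change being that everything is now measured against the single chamber $C_M$ rather than the subcomplex $\mathcal B_1$. First I would pass to the $\eta^{-1}(0)$-part: by Proposition~\ref{ADLVdecomposition} we have $X_\lambda(b_i)\cong\bigsqcup_{J_{b_i}/H_{b_i}}(X_\lambda(b_i)\cap\eta^{-1}(0))$ with $J_{b_i}$ permuting the copies through $J_{b_i}/H_{b_i}$, so it suffices to analyse $X_\lambda(b_i)(\bar k)\cap\eta^{-1}(0)$ while tracking the residual $H_{b_i}$-action. By Proposition~\ref{invdecomposition2} this set is the union of the pieces $Y_{[\mu]}:=(X_\lambda(b_i)(\bar k)\cap\eta^{-1}(0))\cap G_{[\mu]}^{C_M}(Q)$, which are closed by Lemma~\ref{invclosed2}; here $Q$ is the unique vertex of $C_M$ of type $-(e_1+e_2+e_3)$, hence is determined by $[\mu]\in M_\lambda(b_i)$. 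The crucial simplification relative to the case $b=1$ is that $H_{b_i}$ fixes each of $[\Lambda_0],[\Lambda_1],[\Lambda_2]$ (as recorded just before the theorem), hence fixes $C_M$ pointwise, so $g\cdot Y_{[\mu]}=Y_{[\mu]}$ for $g\in H_{b_i}$; thus $H_{b_i}$ acts trivially on $\{Y_{[\mu]}\}$, and after inducing along Proposition~\ref{ADLVdecomposition} the candidate components are indexed by $(J_{b_i}/H_{b_i})\times M_\lambda(b_i)$, with $J_{b_i}$ acting through the first factor.

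Next I would identify each $Y_{[\mu]}$ with one of the subvarieties of Section~\ref{subvarieties}. For $P\in Y_{[\mu]}$, choose a special gallery $(C_0,\dots,C_r)$ from $Q$ to $P$ and record the relative position (I, I\hspace{-.1em}I, or I\hspace{-.1em}I\hspace{-.1em}I) of $C_M$ and $C_0$; the explicit values of $\mathrm{inv}'(P,b_i\sigma P)$ computed in the proof of Lemma~\ref{invfinite2} --- which depend on this relative position and, when $e_1>e_2>e_3$, on whether a bend occurs in the connecting gallery --- show that the constraint $\mathrm{inv}'(P,b_i\sigma P)=[\lambda]$ places $P$ in exactly one of $\mathcal{G}r_{[\mu],C_M}^{u}(Q)$, $\mathcal{G}r_{[\mu],b_i}^{u,j}(Q)$, $\mathcal{G}r_{[\mu],D}^{u}(Q)$, $\mathcal{G}r_{[\mu],D'}^{u}(Q)$ (using the ``there exists a first chamber'' variants in the degenerate cases $e_1=e_2$ or $e_2=e_3$). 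Proposition~\ref{geometricstructure2} then exhibits each $Y_{[\mu]}$ as an affine space or as $\mathbb G_m$ times an affine space; in particular each $Y_{[\mu]}$ is irreducible and closed, with known dimension. Now I would bring in the dimension formula from the introduction: $b_i$ being superbasic one has $\mathrm{def}(b_i)=2$ and $\langle\rho,\lambda-\nu_{b_i}\rangle=m_1-m_3$, so $\dim X_\lambda(b_i)=m_1-m_3-1$. A case analysis on the sign of $m_2$ (resp.\ of $m_2-1$ for $b_2$) comparing this with the dimensions in Proposition~\ref{geometricstructure2} shows that $\dim Y_{[\mu]}=m_1-m_3-1$ exactly for $[\mu]\in M_\lambda(b_i)'$, the subcases ``bound attained'' and ``bound strictly exceeded'' yielding $\mathbb A^{m_1-m_3-1}$ and $\mathbb G_m\times\mathbb A^{m_1-m_3-2}$ respectively, as claimed. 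Hence the $Y_{[\mu]}$ with $[\mu]\in M_\lambda(b_i)'$ are irreducible components, and one checks that the remaining $Y_{[\mu]}$ --- of strictly smaller dimension --- are not: as in Theorem~\ref{ADLVgeometricstructure}, any irreducible component of $X_\lambda(b_i)$ is quasi-compact, so equals some $Y_{[\mu]}$, and it cannot be one of the small ones. This pins the index set down to $(J_{b_i}/H_{b_i})\times M_\lambda(b_i)'$.

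Finally, for the pairwise disjointness I would argue that $Q$ is the unique vertex of $C_M$ nearest to any $P\in Y_{[\mu]}$: in each of the three relative-position pictures, a special gallery from $Q$ to $P$ meeting $C_M$ only at $Q$ proceeds away from the two other vertices of $C_M$, so the gallery distance is strictly minimised at $Q$ --- this is the $C_M$-analogue of the argument closing the proof of Theorem~\ref{ADLVgeometricstructure}(ii),(iii). Then $P$ determines $Q$, hence $[\mu]=\mathrm{inv}'(Q,P)$, and hence (the first chamber of $cl(Q,P)$ being unique when $e_1>e_2>e_3$, and the various bend conditions being mutually exclusive) the precise subvariety containing $P$; so the $Y_{[\mu]}$ are pairwise disjoint, and combined with the first paragraph all irreducible components of $X_\lambda(b_i)$ are pairwise disjoint.

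I expect the real work to lie in the second paragraph: executing the Lemma~\ref{invfinite2} gallery computation in all relative-position cases together with their $e_1=e_2$ and $e_2=e_3$ degenerations, matching each outcome with the correct variety of Proposition~\ref{geometricstructure2}, and --- the genuinely delicate point --- checking that, although several relative positions of $C_M$ and $C_0$ can a priori produce the same $\lambda$ for a fixed $[\mu]$, at most one of them is top-dimensional. It is exactly this last phenomenon that the asymmetric definition of $M_\lambda(b_i)'$ (cutting by $m_{\mu,\mathrm{I\hspace{-.1em}I}}$ for $b_1$ and by $m_{\mu,\mathrm{I\hspace{-.1em}I\hspace{-.1em}I}}$ for $b_2$) is designed to bookkeep, and verifying that the cut is correct is the crux of the argument.
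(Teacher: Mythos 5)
Your overall scaffolding tracks the paper's: pass to the $\eta^{-1}(0)$-piece via Proposition~\ref{ADLVdecomposition}, decompose via Proposition~\ref{invdecomposition2}, show closedness via Lemma~\ref{invclosed2}, identify each piece with a subvariety from Section~\ref{subvarieties}, and apply Proposition~\ref{geometricstructure2} together with the dimension formula $\dim X_\lambda(b_i)=m_1-m_3-1$. Your observation that $H_{b_i}$ fixes each vertex of $C_M$ is also correct and is what makes $M_\lambda(b_i)'$ (rather than some further quotient) the right second factor. However, your account of \emph{why} $M_\lambda(b_i)'$ appears, and consequently your disjointness argument, has a genuine gap.

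You describe the crux as checking that ``although several relative positions of $C_M$ and $C_0$ can a priori produce the same $\lambda$ for a fixed $[\mu]$, at most one of them is top-dimensional'' and that ``the remaining $Y_{[\mu]}$ --- of strictly smaller dimension --- are not'' components. That is not what happens. When $m_2>0$ (for $b_1$, and symmetrically when $m_2<1$ for $b_2$), the set $Y_{[\mu]}:=(X_\lambda(b_1)(\bar k)\cap\eta^{-1}(0))\cap G_{[\mu]}^{C_M}(Q)$ can equal the disjoint union $\mathcal{G}r_{[\mu],b_1}^{\mathrm{I\hspace{-.1em}I},m_2}(Q)\sqcup\mathcal{G}r_{[\mu],b_1}^{\mathrm{I\hspace{-.1em}I\hspace{-.1em}I},m_2}(Q)$, and \emph{both} summands have dimension $2(e_1-e_3)-m_2=m_1-m_3-1$; they are both top-dimensional irreducible components. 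Similarly, for $[\mu]\in M_\lambda(b_1)\setminus M_\lambda(b_1)'$ the piece $\mathcal{G}r_{[\mu],b_1}^{\mathrm{I\hspace{-.1em}I\hspace{-.1em}I},m_2}(Q)$ is still top-dimensional. The actual purpose of passing to $M_\lambda(b_i)'$ is to remove \emph{double-counting}: the paper records the identity $\mathcal{G}r_{[\mu],b_1}^{\mathrm{I\hspace{-.1em}I},j}(Q)=\mathcal{G}r_{[\mu'],b_1}^{\mathrm{I\hspace{-.1em}I\hspace{-.1em}I},j}(b_1Q)$ with $[\mu]=[e_1,e_2,e_3]$, $[\mu']=[e_1,e_2-1,e_3]$, so the same subvariety appears twice in the decomposition indexed by $\mathcal P_\lambda(b_1)$, once with a I\hspace{-.1em}I-label based at $Q$ and once with a I\hspace{-.1em}I\hspace{-.1em}I-label based at $b_1Q$. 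Since $m_{\mu,\mathrm{I\hspace{-.1em}I}}=m_{\mu',\mathrm{I\hspace{-.1em}I\hspace{-.1em}I}}$, keeping only the I\hspace{-.1em}I-labels (the cut $m_{\mu,\mathrm{I\hspace{-.1em}I}}\geq m_2$) selects exactly one representative from each such pair.

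This coincidence also breaks your disjointness argument. You claim ``$Q$ is the unique vertex of $C_M$ nearest to any $P\in Y_{[\mu]}$,'' but the identity above shows that the same $P$ lies in $G_{[\mu]}^{C_M}(Q)$ and in $G_{[\mu']}^{C_M}(b_1Q)$ simultaneously when $m_2>0$, so the retraction to a unique nearest vertex of $C_M$ does not hold there. The paper's proof separates the cases: when $m_2\le 0$ (for $b_1$) one has relative position I, and the unique-nearest-vertex argument of Theorem~\ref{ADLVgeometricstructure} does go through; when $m_2>0$ one must instead identify the duplicated subvarieties explicitly and verify that the choice of I\hspace{-.1em}I-representative yields a genuinely disjoint decomposition. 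Your write-up therefore needs to replace ``lower-dimensional pieces are discarded'' by ``duplicate pieces are identified and one label is chosen,'' and to adapt the disjointness argument accordingly.
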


\begin{proof}
We prove the case for $b_1$, and the proof for $b_2$ is similar.
Let $\lambda=(m_1, m_2, m_3)\in X_*(T)_+$ with $X_{\lambda}(b_1)\neq \emptyset$.
The case for $\lambda=(1,0,0)$ follows immediately from the computation in the proof of Lemma \ref{invfinite2}.
So we may assume $\lambda\neq (1,0,0)$.
First note that we have a decomposition 
$$X_{\lambda}(b_1)(\bar k)\cap \eta^{-1}(0)=\bigcup_{(Q,[\mu])\in \mathcal P_{\lambda}(b_1)}((X_{\lambda}(b_1)(\bar k)\cap \eta^{-1}(0))\cap G_{[\mu]}^{C_M}(Q))$$
by Proposition \ref{invdecomposition2}.
It follows from Lemma \ref{invclosed2} that each $(X_{\lambda}(b_1)(\bar k)\cap \eta^{-1}(0))\cap G_{[\mu]}^{C_M}(Q)$ is closed in $X_{\lambda}(b_1)(\bar k)$.
The computation in the proof of Lemma \ref{invfinite2} shows that for each $(Q,[\mu])\in \mathcal P_{\lambda}(b_1)$, the subset $(X_{\lambda}(b_1)(\bar k)\cap \eta^{-1}(0))\cap G_{[\mu]}^{C_M}(Q)$ is equal to 
\begin{align*}
    &\text{$\mathcal{G}r_{[\mu], C_M}^{\mathrm{I\hspace{-.1em}I}}(Q)$,\quad $\mathcal{G}r_{[\mu], C_M}^{\mathrm{I\hspace{-.1em}I\hspace{-.1em}I}}(Q)$\quad or\quad $\mathcal{G}r_{[\mu], D}^{\mathrm{I}}(Q)$} & (m_2=0) \\
    &\mathcal{G}r_{[\mu], D}^{\mathrm{I\hspace{-.1em}I}, -m_2}(Q) & (m_2<0) \\
    &\text{$\mathcal{G}r_{[\mu],b_1}^{\mathrm{I\hspace{-.1em}I},m_2}(Q)$,\quad $\mathcal{G}r_{[\mu],b_1}^{\mathrm{I\hspace{-.1em}I\hspace{-.1em}I},m_2}(Q)$\quad or\quad $\mathcal{G}r_{[\mu],b_1}^{\mathrm{I\hspace{-.1em}I},m_2}(Q)\sqcup \mathcal{G}r_{[\mu],b_1}^{\mathrm{I\hspace{-.1em}I\hspace{-.1em}I},m_2}(Q)$} & (m_2>0)
\end{align*}
($\mathcal{G}r_{[\mu], C_M}^{\mathrm{I\hspace{-.1em}I}}(Q)$ or $\mathcal{G}r_{[\mu], C_M}^{\mathrm{I\hspace{-.1em}I\hspace{-.1em}I}}(Q)$ appears only in the case where $[\mu]=[e_1, e_2, e_3]$ satisfies $e_1>e_2=e_3$ or $e_1=e_2>e_3$).
In the case $(X_{\lambda}(b_1)(\bar k)\cap \eta^{-1}(0))\cap G_{[\mu]}^{C_M}(Q)=\mathcal{G}r_{[\mu],b_1}^{\mathrm{I\hspace{-.1em}I},m_2}(Q)\sqcup \mathcal{G}r_{[\mu],b_1}^{\mathrm{I\hspace{-.1em}I\hspace{-.1em}I},m_2}(Q)$, it is easy to check that both $\mathcal{G}r_{[\mu],b_1}^{\mathrm{I\hspace{-.1em}I},j}(Q)$ and $\mathcal{G}r_{[\mu],b_1}^{\mathrm{I\hspace{-.1em}I\hspace{-.1em}I},j}(Q)$ are also closed in $X_{\lambda}(b_1)(\bar k)$.
So, using Proposition \ref{geometricstructure2}, we have a decomposition of $X_{\lambda}(b_1)$ into irreducible closed subvarieties of dimension $m_1-m_3-1$.
If $m_2\le 0$, then this decomposition is disjoint because $C_M$ and $C_0$ have relative position I (compare the last part of the proof of Theorem \ref{ADLVgeometricstructure}).
If $m_2>0$, then we have $$\mathcal{G}r_{[\mu],b_1}^{\mathrm{I\hspace{-.1em}I},j}(Q)= \mathcal{G}r_{[\mu'],b_1}^{\mathrm{I\hspace{-.1em}I\hspace{-.1em}I},j}(b_1 Q),$$
where $[\mu]=[e_1, e_2, e_3], [\mu']=[e_1, e_2-1, e_3]$.
In this case, we always choose the one for I\hspace{-.1em}I.
We can also check that this gives a disjoint decomposition of $X_{\lambda}(b_1)$ into irreducible components.
Thus all of the irreducible components are parameterized by the elements in $(J_{b_1}/H_{b_1})\times M_{\lambda}(b_1)'$.
Again by Proposition \ref{geometricstructure2}, their geometric structures are given as above.
\end{proof}

\begin{rema}
\label{nonemptiness}
The criterion for non-emptiness of $X_{\lambda}(b)$ is already known (cf.\  \cite[Theorem 4.16]{Gortz}).
Let $\lambda=(m_1, m_2, m_3)\in X_*(T)_+$.
Then
\begin{align*}
X_{\lambda}(1)\neq \emptyset &\Leftrightarrow m_1\geq 0, m_1+m_2\geq 0, m_1+m_2+m_3=0, \\
X_{\lambda}(b_1)\neq \emptyset &\Leftrightarrow m_1\geq \frac{1}{3}, m_1+m_2\geq \frac{2}{3}, m_1+m_2+m_3=1, \\
X_{\lambda}(b_2)\neq \emptyset &\Leftrightarrow m_1\geq \frac{2}{3}, m_1+m_2\geq \frac{4}{3}, m_1+m_2+m_3=2.
\end{align*}
\end{rema}

A flat morphism of $f\colon X\rightarrow Y$ of varieties over $\bar k$ is called an $\mathbb A^n$-fibration, for some integer $n$, if for every $y\in Y$, the fiber $f^{-1}(y)$ is isomorphic to $\mathbb A^n$.

We will now consider an $\mathbb A^n$-fibration over a Deligne-Lusztig variety:
Let $G$ be a connected reductive group defined over $\mathbb F_q$, and let $F\colon G\rightarrow G$ be a Frobenius map over $\mathbb F_q$.
Fix an $F$-stable Borel subgroup $B_0$ containing an $F$-stable maximal torus $T_0$ (such a pair always exists; see \cite[1.17]{Carter}).
Let $W$ be the Weyl group of $T_0$.
For any $w\in W$, we denote by $X(w)$ the Deligne-Lusztig variety associated with $w$.

\begin{coro}
\label{ADLVgeometricstructure3}
Let $\lambda=(m_1, m_2, m_3)\in X_*(T)_+$ with $X_{\lambda}(b)\neq \emptyset$.
\begin{enumerate}[(i)]
\item Let $b=1$.
Then every irreducible component of $X_{\lambda}(1)$ is an $\mathbb A^n$-fibration over a Deligne-Lusztig variety for some $n$ if and only if $\lambda$ is one of the following forms:
$$\lambda=(2r, -r, -r), (r, r, -2r), (2r+3, -r-1, -r-2), (r+2, r+1, -2r-3)$$
for some $r\geq 0$.
In the first two cases, we have
\begin{align*}
X_{(0,0,0)}(1)&\cong \bigsqcup_{J_1/K_0}\{pt\} \hspace{2.7cm} \text{and}\\
X_{\lambda}(1)&\cong \bigsqcup_{J_1/K_0}\Omega\times \mathbb A^{3r-2}\ (r>0)
\end{align*}
as $\bar k$-varieties.
In the last two cases, we have
$$X_{\lambda}(1)\cong (\bigsqcup_{J_1/K_1}\Omega\times \mathbb A^{3(r+1)})\sqcup (\bigsqcup_{J_1/K_2}\Omega\times \mathbb A^{3(r+1)})$$
as $\bar k$-varieties.

\item Let $b=b_1$.
Then every irreducible component of $X_{\lambda}(b_1)$ is an $\mathbb A^n$-fibration over a Deligne-Lusztig variety for some $n$ if and only if $\lambda$ is one of the following forms:
$$\lambda=(2r+1, -r, -r), (r+1, r+1, -2r-1), (r+1, r, -2r), (2r+2, -r, -r-1)$$
for some $r\geq 0$.
In the first two cases, we have
$$
X_{\lambda}(b_1)\cong \bigsqcup_{J_{b_1}/H_{b_1}}\mathbb A^{d}
$$
as $\bar k$-varieties, where $d=3r, 3r+1$ respectively.
In the last two cases, we have
$$X_{\lambda}(b_1)\cong (\bigsqcup_{J_{b_1}/H_{b_1}} \mathbb A^d)\sqcup (\bigsqcup_{J_{b_1}/H_{b_1}} \mathbb A^d)$$
as $\bar k$-varieties, where $d=3r, 3r+2$ respectively.

\item Let $b=b_2$.
Then every irreducible component of $X_{\lambda}(b_2)$ is an $\mathbb A^n$-fibration over a Deligne-Lusztig variety for some $n$ if and only if $\lambda$ is one of the following forms:
$$\lambda=(r+1, r+1, -2r), (2r+2, -r, -r), (2r+1, -r+1, -r), (r+2, r+1, -2r-1)$$
for some $r\geq 0$.
In the first two cases, we have
$$
X_{\lambda}(b_2)\cong \bigsqcup_{J_{b_2}/H_{b_2}}\mathbb A^{d}
$$
as $\bar k$-varieties, where $d=3r, 3r+1$ respectively.
In the last two cases, we have
$$X_{\lambda}(b_2)\cong (\bigsqcup_{J_{b_2}/H_{b_2}} \mathbb A^d)\sqcup (\bigsqcup_{J_{b_2}/H_{b_2}} \mathbb A^d)$$
as $\bar k$-varieties, where $d=3r, 3r+2$ respectively.
\end{enumerate}
\end{coro}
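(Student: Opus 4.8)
The plan is to read off the list of possible geometric types of an irreducible component from Theorem~\ref{ADLVgeometricstructure} and Theorem~\ref{ADLVgeometricstructure2}, and then to decide, type by type, which of them can be an $\mathbb A^n$-fibration over a Deligne--Lusztig variety. By those two theorems every irreducible component of $X_\lambda(b)$ is, as a $\bar k$-variety, one of: a point; an affine space $\mathbb A^n$; $\Omega\times\mathbb A^n$; $\Omega\times\mathbb G_m\times\mathbb A^n$; $\mathbb G_m\times\mathbb A^n$; an affine bundle over $X_{\mathrm{I}}$; or (only for $b=1$) an affine bundle over $\mathbb P^2\setminus\mathbb P^2(k)$. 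The first four are manifestly $\mathbb A^n$-fibrations over a Deligne--Lusztig variety: a point, $X_{\mathrm{I}}$ (the variety attached to the longest element) and $\Omega$ (the variety attached to a Coxeter element) are themselves Deligne--Lusztig varieties, and $\mathbb A^n$, $\Omega\times\mathbb A^n$ and the affine bundles over $X_{\mathrm{I}}$ fiber over them with fibers $\mathbb A^n$. Hence the assertion of the corollary is equivalent to: \emph{no} component is of one of the three ``bad'' types $\Omega\times\mathbb G_m\times\mathbb A^n$, $\mathbb G_m\times\mathbb A^n$, or an affine bundle over $\mathbb P^2\setminus\mathbb P^2(k)$.

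Next I would translate this into the explicit list of $\lambda$. Inspecting the proofs of Theorems~\ref{ADLVgeometricstructure} and~\ref{ADLVgeometricstructure2}, a $\mathbb G_m$-factor appears in the component attached to $[\mu]=[e_1,e_2,e_3]\in M_\lambda(b)$ precisely when the relevant width invariant ($m_\mu$ for $b=1$, and $m_{\mu,\mathrm I}$, $m_{\mu,\mathrm{I\hspace{-.1em}I}}$, $m_{\mu,\mathrm{I\hspace{-.1em}I\hspace{-.1em}I}}$ for $b=b_1,b_2$) is \emph{strictly larger} than the value forced by $\lambda$ (namely $|m_2|$, up to the shifts $\pm1$ that occur in the superbasic case); and an affine bundle over $\mathbb P^2\setminus\mathbb P^2(k)$ occurs, for $b=1$, exactly when $m_2=0$ and $\lambda\neq0$. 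Using the explicit descriptions of $M_\lambda(b)$ from Lemma~\ref{invfinite} and Lemma~\ref{invfinite2}, one parametrizes $M_\lambda(b)$ by $a=e_1-e_2$ with $0\le a\le e_1-e_3$ (where $e_1-e_3$ is determined by $\lambda$), so that the width invariant takes the form $\min(a+\varepsilon,\,(e_1-e_3)-a+\varepsilon')$ with $\varepsilon,\varepsilon'\in\{0,1\}$. Demanding that this equal the forced value for \emph{every} admissible $a$ forces the admissible interval of $a$ to reduce to (at most) its two endpoints, which is a single linear inequality on $(m_1,m_2,m_3)$; solving it together with the non-emptiness condition of Remark~\ref{nonemptiness}, and handling $m_2=0$ directly, produces exactly the four one-parameter families in each of (i), (ii), (iii). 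For those $\lambda$ the set $M_\lambda(b)$ is either a single class (yielding one $J_b$-orbit of components) or a pair whose types in $\mathbb Z/3$ are $1$ and $2$ (yielding the two orbits over $K_1$ and $K_2$); plugging the ranks from Theorems~\ref{ADLVgeometricstructure} and~\ref{ADLVgeometricstructure2} into $m_1-m_3=3r,\ 3r+2,\ \dots$ gives the stated isomorphisms. This part is bookkeeping.

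The genuinely non-formal point, and the main obstacle, is the ``only if'' direction: that the bad types are \emph{not} $\mathbb A^n$-fibrations over any Deligne--Lusztig variety. For $\Omega\times\mathbb G_m\times\mathbb A^n$ and $\mathbb G_m\times\mathbb A^n$ this is clean: the $\ell$-adic Euler characteristic with compact support $\chi_c$ is multiplicative, $\chi_c(\mathbb G_m)=0$, and $\chi_c$ is unchanged along an $\mathbb A^n$-fibration (all fibers have $\chi_c=1$); on the other hand $\chi_c(X(w))=\pm[G^F:T^{wF}]_{p'}\neq0$ for every Deligne--Lusztig variety by Deligne--Lusztig theory, so a variety with vanishing $\chi_c$ cannot fiber over one. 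For an affine bundle $E$ of rank $n$ over $\mathbb P^2\setminus\mathbb P^2(k)$ the Euler characteristic is $2-q-q^2\neq0$, so this argument fails and I would instead work with $\ell$-adic cohomology: the bundle is Zariski-locally trivial by Lemma~\ref{affinebundle}, hence $H^i_c(E)\cong H^{i-2n}_c(\mathbb P^2\setminus\mathbb P^2(k))$, and a direct computation shows $H^*_c(\mathbb P^2\setminus\mathbb P^2(k))$ is nonzero exactly in degrees $1,2,4$ with $\dim H^1_c=q^2+q$. If $E$ were an $\mathbb A^m$-fibration over a Deligne--Lusztig variety $Y$, then already $\dim H^{2\dim E}_c(E)=1$ forces $Y$ irreducible; one then plays off Artin vanishing ($H^i_c(Y,\mathcal F)=0$ for $i<\dim Y$ once $Y$ is affine, which holds for the relevant $w$) and the width of the degree support against the three-degree, bottom-degree-$(q^2+q)$ shape of $H^*_c(E)$ to get a contradiction, treating the low-dimensional cases $\dim Y\le 2$ by hand. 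Pinning down this last step uniformly in all dimensions, using the structure theory of Deligne--Lusztig varieties, is where the real work lies.
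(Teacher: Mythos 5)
Your overall plan matches the paper's: read off the possible geometric types from Theorems \ref{ADLVgeometricstructure} and \ref{ADLVgeometricstructure2}, note the ``good'' types ($\mathbb A^n$, $\Omega\times\mathbb A^n$, point, affine bundle over $X_{\mathrm I}$) are trivially fine, and show the ``bad'' types ($\mathbb G_m\times\mathbb A^n$, $\Omega\times\mathbb G_m\times\mathbb A^n$, affine bundle over $\mathbb P^2\setminus\mathbb P^2(k)$) never fiber over a Deligne--Lusztig variety. Your $\mathbb G_m$ argument via $\chi_c=0$ versus $\chi_c(X(w))\neq 0$ is exactly the paper's. The bookkeeping extracting the four families of $\lambda$ is also the same in spirit.

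However, there is a genuine gap at precisely the step you flag as ``where the real work lies'': you do not actually rule out that an affine bundle over $\mathbb P^2\setminus\mathbb P^2(k)$ is an $\mathbb A^m$-fibration over some Deligne--Lusztig variety $X(w)$. You also prematurely discard the Euler characteristic tool here. The paper does \emph{not} abandon Euler characteristics for this case; it uses them quantitatively. The argument is: (a) the degree-support result $H^q_c(X(w))=0$ for $q<l(w)$ (Digne--Michel--Rouquier) combined with the fact that $H^*_c(E)$ is supported in a window of width $4$ with nonzero bottom degree forces $l(w)\le 3$ to be handled separately — and the case $l(w)<3$ is killed by the mismatch of cohomological widths, leaving $l(w)\ge 3$, hence $N\ge l\ge 2$ and $N\ge 3$ for the ambient group; (b) the explicit formula $|\chi(X(w))|=|G^F|/(q^N|T^F|)$, together with the factorizations $|G^F|=q^N\prod(q^{d_i}-\epsilon_i)$ and $|T^F|=\prod(q-\lambda_i)$, gives a lower bound for $|\chi(X(w))|$ that exceeds $q^2+q-2$ for $q\ge 3$ (with a small special case $q=3$, $N=l=3$ treated by hand), and the case $q=2$ is excluded by a parity argument. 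Your sketch of ``play off Artin vanishing and the width of the degree support'' does not, by itself, reach a contradiction: the shape of $H^*_c(X(w))$ is not constrained enough by Artin vanishing alone, and you would still need a numerical input — which is exactly the Euler characteristic comparison you set aside. So the proposal is correct in outline but missing the decisive numerical estimate, and misdiagnoses why that estimate is needed.
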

\begin{proof}
In every case, one way follows from Theorem \ref{ADLVgeometricstructure} or Theorem \ref{ADLVgeometricstructure2}.
For the converse, it suffices to show that if a variety $V$ over $\bar k$ is an $\mathbb A^n$-fibration over $\mathbb G_m$ or $\mathbb P^2\setminus \mathbb P^2(k)$, then $V$ cannot be an $\mathbb A^n$-fibration over a Deligne-Lusztig variety. 
To show this, we use the $l$-adic cohomology with compact support ($l\neq \mathop{\mathrm{char}} k$).
If a $\bar k$-variety $V$ is an $\mathbb A^n$-fibration over a $\bar k$-variety $X$, then we have $H^q_c(V)\cong H^{q-2n}_c(X)$ as $\overline{\mathbb Q}_l$-vector spaces (cf.\ \cite[5.5, 5.7]{Srinivasan}).
In particular, the Euler characteristics of $V$ and $X$ are equal, i.e., $\chi(V)=\chi(X)$.
So, to complete the proof, we compare the Euler characteristics.

If a variety $V$ over $\bar k$ is an $\mathbb A^n$-fibration over $\mathbb G_m$, then we have 
$$\chi(V)=\chi(\mathbb G_m)=0.$$
However, by \cite[Theorem 7.5.1, Theorem 7.7.11]{Carter} or \cite[Theorem 7.1]{DL}, the Euler characteristic of a Deligne-Lusztig variety is nonzero.
So $V$ cannot be an $\mathbb A^n$-fibration over a Deligne-Lusztig variety.

If a variety $V$ over $\bar k$ is an $\mathbb A^n$-fibration over $\mathbb P^2\setminus \mathbb P^2(k)$, then we have 
\[
  {\mathop{\mathrm{dim}}}_{\overline{\mathbb Q}_l} H^q_c(V)={\mathop{\mathrm{dim}}}_{\overline{\mathbb Q}_l}H^{q-2n}_c(\mathbb P^2\setminus \mathbb P^2(k))=
  \begin{cases}
    1 & (q=2n+2, 2n+4) \\
    q^2+q & (q=2n+1) \\
    0 & (\text{otherwise}) \\
  \end{cases}
\]
and $$|\chi(V)|=|\chi(\mathbb P^2\setminus \mathbb P^2(k))|=q^2+q-2.$$
If $q=2$, then $|\chi(V)|=4$.
However, by \cite[Theorem 7.5.1, Theorem 7.7.11]{Carter}, the Euler characteristic of a Deligne-Lusztig variety is odd if $q=2$, and different from $\chi(V)$.
So we may assume $q\geq 3$.
We will compare the absolute value of the Euler characteristic of a Deligne-Lusztig variety and $|\chi(V)|=q^2+q-2$.

Let $G, B_0, T_0, W$ be as above.
Let $N$ be the number of positive roots, and let $l$ be the number of simple roots.
Further, let $T$ be an $F$-stable maximal torus of $G$ obtained from a maximally split torus $T_0$ by twisting by $w\in W$.
Thus $T=gT_0g^{-1}$, where $g^{-1}F(g)=\dot{w}$ ($\dot{w}$ is a representative of $w$).
Then, by \cite[Theorem 7.5.1, Theorem 7.7.11]{Carter}, we have
$$|\chi(X(w))|=\frac{|G^F|}{q^{N}|T^F|}.$$
Moreover, by \cite[Corollaire 3.3.22]{DMR}, we have
$$\text{$H^q_c(X(w))=0$ for $0\le q< l(w)$},$$
where $l(w)$ denotes the length of $w\in W$.
Since $\mathop{\mathrm{dim}}X(w)=l(w)$ and $H^q_c(V)\neq 0$ for $q=2n+1, 2n+4$, it is enough to consider the case for $l(w)\geq 3$.
In this case, we obviously have $N\geq l\geq 2$ and $N\geq 3$.

Let us compare $|\chi(X(w))|$ and $|\chi(V)|=q^2+q-2$ under the assumption that $q\geq 3$, $N\geq l\geq 2$ and $N\geq 3$.
Using \cite[Proposition 3.3.7]{Carter}, we may also assume that $G$ is semisimple.
Then, as in the proof of \cite[Proposition 3.6.7]{Carter}, we have
$$|T^F|=(q-\lambda_1)\cdots (q-\lambda_l),$$
where $\lambda_1, \ldots, \lambda_l$ are roots of unity.
For $|G^F|$, as in \cite[2.9]{Carter}, we have
$$|G^F|=q^N\prod_{i=1}^l(q^{d_i}-\epsilon_i),$$
where $d_1+\cdots+d_l=N+l$, $d_1\cdots d_l=|W|$ and $\epsilon_i$ is a root of unity.
So we have
$$|\chi(X(w))|\geq \frac{\prod_{i=1}^l(q^{d_i}-1)}{(q+1)^l}.$$

If $N=l\geq 3$, then $d_1=\cdots=d_l=2$.
So if $q>3$ or $q=3, l> 3$, we have $$|\chi(X(w))|\geq (q-1)^l> (q-1)(q+2).$$
If $q=3$ and $N=l=3$, then $|\chi(V)|=10$ and $|W|=8$.
The facts in \cite[1.18, 2.9]{Carter} imply easily that $|G^F|$ is divisible by $10$ if and only if $|W^F|=4$, where $W^F$ is the subgroup of $F$-stable elements of $W$.
If in this case, we have
$$|G^F|=3^3\cdot 2\cdot 8\cdot(1+3+9+27)=3^3\cdot 4^3\cdot 10.$$
On the other hand, we have $$|T^F|\le (q+1)^l=4^3$$ with equality only if each $\lambda_i=-1$.
However, if $|W^F|=4$, then the equality $|T^F|=4^3$ does not hold.
This can be checked using the techniques in the proof of \cite[Proposition 3.6.7]{Carter} (and the facts on $F:X(T_0)_{\mathbb R}\rightarrow X(T_0)_{\mathbb R}$ and $W^F$ in \cite[1.18]{Carter}).
So $|\chi(X(w))|$ is never equal to $10$.

If $N>l\geq 2$, then it suffices to show $$\frac{\prod_{i=1}^l(q^{d_i}-1)}{(q+1)^l(q-1)(q+2)}>1.$$
Since $d_1+\cdots+d_l=N+l$, $d_1\cdots d_l=|W|$, there exist integers $e_1, e_2$ such that $e_1\geq e_2\geq 1$, $e_1+e_2=N$ (hence $e_1\geq 2$) and
\begin{align*}
\frac{\prod_{i=1}^l(q^{d_i}-1)}{(q+1)^l(q-1)(q+2)}&\geq\left( \frac{q-1}{q+1}\right)^l\frac{q^{e_1}+\cdots+1}{q-1}\frac{q^{e_2}+\cdots+1}{q+2}\\
&> \frac{3^{e_1+e_2}+3^{e_1+e_2-1}+3^{e_1+e_2-2}+3^{e_1+e_2-3}}{5\cdot2^{l+1}}\geq 1.
\end{align*}
This completes the proof.
\end{proof}

\bibliographystyle{myamsplain}
\bibliography{reference}
\end{document}